\newtheorem{theorem}{Theorem}[section]
\newtheorem{lemma}[theorem]{Lemma}
\newtheorem{corollary}[theorem]{Corollary}
\theoremstyle{definition}
\theoremstyle{remark}
\newtheorem{remark}[theorem]{Remark}
\numberwithin{equation}{section}
\newcommand{\R}{\mathbb{R}}  % reelle.
\newcommand{\Z}{\mathbb{Z}} % ganze
\newcommand{\N}{\mathbb{N}} % natuerliche
\newcommand{\E}{\mathcal{E}} % energy
\newcommand{\EP}{\E}%\E^{(p,\lambda)}}
\newcommand{\EPD}{\E^{(p)}_{\delta}}
\newcommand{\F}{\mathcal{F}} % third order energy
\newcommand{\ER}{\E^{(p)}_{\varepsilon, \delta, \lambda}} % full regularized energy
\newcommand{\Ell}{\mathcal{L}}
\newcommand{\RZ}{\R / \Z}
\newcommand{\gs}{\nabla_s}
\newcommand{\gt}{\nabla_t}
\newcommand{\fs}{\partial_s}
\newcommand{\ft}{\partial_t}
\newcommand{\fx}{\partial_x}
\newcommand{\g}{f}  % Symbol for a curve
\newcommand{\ka}{\kappa} % curvature vector
\newcommand{\Le}{\mathcal{L}} % Length of the curve
\newcommand{\tg}{\tilde \g}
\newcommand{\rg}{\g^{\varepsilon,\delta}} % solution to the regularized gradient flow
\newcommand{\trg}{\tg^{\varepsilon, \delta}}
\newcommand{\gn}{\g^{(n)}} % Approximations by smooth data.
\DeclareMathOperator{\hol}{h\ddot{o}l}
   \def\MR#1{}
\begin{document}
	
\date{\today}

\title[A regularized gradient flow for the $p$-elastic energy]{\Large Regularized gradient flow for the $p$-elastic energy}

\author{Simon Blatt}
\address{Fachbereich Mathematik, Universität Salzburg, Hellbrunner Strasse 34, 5020 Salzburg, Austria}
\email{simon.blatt@sbg.ac.at}
\urladdr{https://blatt.sbg.ac.at/} 

\author{Christopher Hopper}
\address{Fachbereich Mathematik, Universität Salzburg, Hellbrunner Strasse 34, 5020 Salzburg, Austria}
\email{christopher.hopper@sbg.ac.at}

\author{Nicole Vorderobermeier}
\thanks{The authors acknowledges support by the Austrian Science Fund (FWF), Grant P 29487. Special thanks to Armin Schikorra for pointing the authors to the approach to higher differentiability in \cite{Brasco2017}}
\address{Fachbereich Mathematik, Universität Salzburg, Hellbrunner Strasse 34, 5020 Salzburg, Austria}
\email{nicole.vorderobermeier@sbg.ac.at}
\urladdr{https://uni-salzburg.at/index.php?id=209722}

\keywords{geometric evolution equation, degenerate evolution equation, fourth order, $p$-elastic curves}
\subjclass[2010]{53C44, 53A04}

\begin{abstract}
 We prove long-time existence for the negative $L^2$-gradient flow of the $p$-elastic energy, $p\geq 2$, with an additive positive multiple of the length of the curve. To achieve this result we regularize the energy by adding a small multiple of a higher order energy, namely the square of the $L^2$-norm of the normal gradient of the curvature $\kappa$. Long-time existence is proved for the gradient flow of these new energies together with the smooth sub-convergence of the evolution equation's solutions to critical points of the regularized energy in $W^{2,p}$. We then show that the solutions to the regularized evolution equations converge to a weak solution of the negative gradient flow of the $p$-elastic energies. These latter weak solutions also sub-converge to critical points of the $p$-elastic energy.
\end{abstract}

\maketitle
\tableofcontents

\section{Introduction}

We continue our investigation that was started in \cite{Blatt2021} of the negative gradient flow of the sum of $p$-elastic energy and a positive multiple $\lambda>0$ of the length of a regular curve $\g: \RZ \rightarrow \mathbb R^n$ , i.e. of 
$$
 \E (\g)= \tfrac 1 p \int_{\RZ} |\ka_{\g}|^p ds + \lambda \Le(\g).
$$
We will only consider the non-singular case  $p \in [2,\infty)$ of the energy $\EP$. Here, $\partial_s = \frac {\partial_x} {|\g'(x)|}$ denotes the derivative with respect to arc-length, $\ka_\g = \partial_s ^2 \g$  the curvature vector of the curve $\g$, $ds = |\g'| dx$  the integration with respect to arc-length, and $\Le(\g)= \int_{\RZ} |\g'| dx$ denotes the length of the curve.

In \cite{Blatt2021} we used de Giorgi's minimizing movement scheme together with approximate normal graphs to prove short time existence for the weak negative gradient flow of the energy $\E$. We could start the gradient flow for any initial regular curve of class $W^{2,p}$ and give a lower bound on the time of existence that only depended on $p,\lambda$ and the energy of the initial curve. 
 
 In this article we prove the following fundamental result.

\begin{theorem}\label{thm:LongTimeExistenceRElastic} \footnote{While proofreading a manuscript with a similar result for planar curves but a completely different approach appeared (c.f. \cite{Okabe2021}).}
Given any regular closed curve  $\g_0: \RZ \rightarrow \mathbb R^n$ parametrized with constant speed there is a family of regular curves $\g: [0,\infty) \times \RZ \rightarrow \mathbb R^n$, $\g \in H^1([0,\infty), L^2(\RZ, \R^n)) \cap L^\infty([0,\infty), W^{2,p}(\RZ, \R^n)) \cap C^{\frac 12} ([0,\infty) L^2 (\RZ, \mathbb R^n ))$ solving the initial value problem 
\begin{equation*}
	\begin{cases}
	\partial^\bot_t \g & = - \nabla_{L^2} \EP(\g)  \quad \text{ on } [0,T) \times \RZ \\
	\g(0, \cdot ) &= \g_0
	\end{cases}
\end{equation*}
in the weak sense, i.e. for all $V\in C^\infty_c ([0, \infty) \times \RZ, \R^n)$ we have 
\begin{equation*}
 \int_0^\infty \int_{\R/\Z} \langle \partial_t^\bot \g, V \rangle ds dt = - \int_0^\infty (\delta_V \EP (\g)) dt.
\end{equation*}
Furthermore, for all times $t \in [0,\infty)$ the curves $\g_t = \g(t, \cdot)$ are parametrized with constant speed and 
there is a subsequence $t_n \rightarrow \infty$ such that the translated curves
$
 f_{t_n }  (\cdot)- f_{t_n}(0)
$
converge in $W^{2,p}$ to a critical point of $\EP$.
\end{theorem}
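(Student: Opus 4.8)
The plan is to prove the theorem by a vanishing-regularization scheme: I would approximate $\EP$ by a family of non-degenerate, higher-order energies whose gradient flows are genuinely parabolic, establish long-time existence and subconvergence for these approximating flows, and then pass to the limit in the regularization parameters. Concretely, I would work with
\[
 \ER(\g)=\frac1p\int_{\RZ}(\delta^2+|\ka_\g|^2)^{p/2}\,ds+\varepsilon\int_{\RZ}|\gs\ka_\g|^2\,ds+\lambda\Le(\g),
\]
where $\delta>0$ removes the degeneracy of the $p$-Laplacian-type leading term and $\varepsilon>0$ raises the associated flow to sixth order, with leading part $2\varepsilon\,\partial_s^4\ka_\g$ in $\nabla_{L^2}\ER$.

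First I would fix $\varepsilon,\delta>0$ and, starting from a smooth approximation $\g_0^{\varepsilon,\delta}$ of $\g_0$, prove long-time existence of $\partial_t^\bot\rg=-\nabla_{L^2}\ER(\rg)$. In a constant-speed (or normal-graph) gauge this is a quasilinear, uniformly parabolic equation of sixth order, so short-time existence and instantaneous smoothing follow from standard parabolic theory. Energy monotonicity, $\frac{d}{dt}\ER(\rg)=-\|\partial_t^\bot\rg\|_{L^2}^2\le0$, then yields uniform-in-time control of $\int|\gs\ka|^2\,ds$, of $\int|\ka|^p\,ds$, and of the length; Fenchel's theorem combined with the constant-speed parametrization gives in addition a positive lower bound on the length. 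Interpolating these a priori bounds against the parabolic smoothing bootstraps to uniform $C^\infty$ estimates that rule out finite-time singularities, so $\rg$ exists on $[0,\infty)$. Since $\int_0^\infty\|\partial_t^\bot\rg\|_{L^2}^2\,dt\le\ER(\g_0^{\varepsilon,\delta})<\infty$, there is $t_n\to\infty$ with vanishing normal velocity, and the uniform bounds let me extract translated curves converging smoothly (in particular in $W^{2,p}$) to a critical point of $\ER$.

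Next I would let $\varepsilon,\delta\to0$. Choosing the smooth initial data so that $\g_0^{\varepsilon,\delta}\to\g_0$ in $W^{2,p}$ and $\ER(\g_0^{\varepsilon,\delta})\to\EP(\g_0)$, the energy monotonicity provides bounds uniform in $(\varepsilon,\delta)$: namely $\sup_t\EP(\rg_t)\le C$, $\int_0^\infty\|\partial_t^\bot\rg\|_{L^2}^2\,dt\le C$, and two-sided length bounds. These furnish enough compactness---weak-$\ast$ in $L^\infty([0,\infty),W^{2,p})$, weak in $H^1([0,\infty),L^2)$, and, after normalizing translations, strong in $C^0_{\mathrm{loc}}([0,\infty),C^1)$ by Aubin--Lions---to extract a limit $\g$ with the regularity claimed in the theorem, the $C^{1/2}$-in-time bound following from the $H^1_tL^2_x$ control. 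The main obstacle is passing to the limit in the degenerate nonlinearity $|\ka|^{p-2}\ka$ appearing in $\delta_V\EP$: this needs strong convergence of $\ka_{\rg}$ in $L^p$, which the energy bound alone does not provide. Here I expect to rely on the monotonicity of the $p$-Laplacian-type operator (available for $p\ge2$) together with uniform higher-differentiability estimates in the spirit of the approach of Brasco et al., which upgrade weak to strong convergence of $\gs\ka$ and hence of $\ka$, so that the weak formulation $\int_0^\infty\int_{\RZ}\langle\partial_t^\bot\g,V\rangle\,ds\,dt=-\int_0^\infty\delta_V\EP(\g)\,dt$ survives in the limit.

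Finally, for the subconvergence as $t\to\infty$ I would repeat the argument of the second paragraph at the level of $\EP$. The limit inherits the dissipation inequality $\int_0^\infty\|\partial_t^\bot\g\|_{L^2}^2\,dt\le\EP(\g_0)$, so there is $t_n\to\infty$ with $\nabla_{L^2}\EP(\g_{t_n})\to0$ in $L^2$. The bound $\EP(\g_{t_n})\le\EP(\g_0)$, the constant-speed gauge, and the two-sided length bounds make $\g_{t_n}-\g_{t_n}(0)$ bounded in $W^{2,p}(\RZ,\R^n)$, so a subsequence converges weakly in $W^{2,p}$ and strongly in $C^1$; passing to the limit in $\delta_V\EP$ identifies the limit curve as a critical point of $\EP$. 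The last point is to upgrade weak to strong $W^{2,p}$ convergence, which I would extract from the uniform convexity of $\ka\mapsto|\ka|^p$ for $p\ge2$ together with $\nabla_{L^2}\EP(\g_{t_n})\to0$, yielding strong convergence of the curvatures and therefore of the translated curves in $W^{2,p}$.
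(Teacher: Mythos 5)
Your overall strategy coincides with the paper's: the same doubly regularized energies $\ER$, the same adaptation of the argument of \cite{Dziuk2002} for long-time existence and subconvergence of the sixth-order regularized flows, and the same key device for the limit $\varepsilon,\delta\downarrow 0$, namely testing the equation with finite differences and exploiting the monotonicity of $w\mapsto(|w|^2+\delta^2)^{(p-2)/2}w$ to obtain uniform Besov bounds on the curvature (in $B^{\frac1{2p}}_{p,\infty}$ in space, integrated in time) in the spirit of \cite{Brasco2017}, which yield the strong $L^p_{loc}W^{2,p}$ convergence of curvatures needed to pass to the limit in $\delta_V\EPD$. Up to and including the construction of the weak solution, your plan is essentially the paper's proof.

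The one step that would fail as written is in your asymptotics: from $\int_0^\infty\|\partial_t^\bot\g\|_{L^2}^2\,dt<\infty$ you conclude that there are $t_n\to\infty$ with $\nabla_{L^2}\EP(\g_{t_n})\to0$ in $L^2$. But the limit $\g$ is only a weak solution --- the identity $\partial_t^\bot\g=-\nabla_{L^2}\EP(\g)$ holds only after integrating against space-time test functions --- and for a curve that is merely $W^{2,p}$ the expression $\nabla_{L^2}\EP(\g_{t})$, which contains $\partial_s^2\bigl(|\ka|^{p-2}\ka\bigr)$, is not a well-defined $L^2$ function at a fixed time. The same objection applies to your proposed upgrade to strong $W^{2,p}$ convergence via uniform convexity together with $\nabla_{L^2}\EP(\g_{t_n})\to0$. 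The repair, which is what the paper does, is to choose $t_n$ so that $\|\partial_t\g\|_{L^2([t_n,t_n+1]\times\RZ)}\to0$ while $\|\ka\|_{L^p([t_n,t_n+1],B^{1/(2p)}_{p,\infty})}$ stays bounded, to consider the time-translated flows on $[0,1]$, to use the Besov bound to extract a subsequence converging strongly in $L^p([0,1],W^{2,p})$, and to pass to the limit in the integrated weak formulation: the velocity term vanishes and one is left with $\int_0^1\delta_V\EP(\g_\infty)\,dt=0$ for all test fields, which after separating variables ($V=\phi(t)\tilde V(x)$) identifies the limit as a critical point; the $W^{2,p}$ convergence of $\g_{t_n}-\g_{t_n}(0)$ then comes from the same Besov compactness rather than from convexity.
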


Let us note that this result dramatically improves our previous findings in \cite{Blatt2021} and is based on a completely different approach. Instead of using de Giorgi's minimizing movement scheme, we approximate the solution by solutions  to the negative gradient flow of regularized energies. We cut off the degeneracy of the energy $\E^{(p)}$  near $\kappa =0$ and add a small multiple of higher order term to get an energy, for which one can prove long-time existence of the related gradient flow adapting the techniques in \cite{Dziuk2002}. In Section \ref{sec:APriori} we prove a priori estimates that we use in Section \ref{sec:Convergence} to show that the solutions of the gradient flow of the regularized energies converge to a solution of our initial problem.

Elastica have led to major breakthroughs ever since James Bernoulli had challenged the mathematical world to invent a mathematical model for the elastic beam and solve the resulting equations in 1691. Apart from the invention of the curvature of a curve by James Bernoulli, this led to the remarkable classification of planar elastica by Leonhard Euler  \cite{Euler1952, Oldfather1933} in Euclidean space and by Langer and Singer on the sphere and in hyperbolic space \cite{Langer1984}.  For a very nice introduction to the history of this problem we recommend the article \cite{Truesdell1983}.

In the last decades one has successfully investigated the related gradient flows in the quadratic case $p=2$. Both, for the flow in Euclidean space \cite{Dziuk2002} as well as for the flow of curves on manifolds \cite{DallAcqua2018,Mueller2019,Pozzetta2020}, one has now quite a complete picture of the behaviour or the evolution equation in this special case.

While the analysis of the Euler elastica goes back more than three centuries, the non-quadratic case $p \not= 2$ was treated only quite recently. Watanabe \cite{Watanabe2014} found that critical points of the $p$-elastic energy for $p \not=2$ can have a very different behavior than in the quadratic case .  He was able to classify all $p$-elastic curves using new variants of elliptic integrals.

The flow considered in this article was not investigated up to now apart from the work of the authors. In \cite{Okabe2018, Novaga2020} a second order version of the flow was considered for both curves and networks.

\section{The regularized equations}
 
To construct solutions to \eqref{regflow}, we approximate the energy $\EP$ by regularized energies. We get rid of the degeneracy of $\EP$ by introducing the regularization
$$
\EPD(\g) = \frac 1p \int_{\RZ} (|\ka|^2 + \delta^2 )^{\frac p2} ds
$$
for any $\delta >0 $. We will furthermore add a small positive multiple of the energy
$$
 \F(\g) = \frac 1 2 \int_{\RZ} |\nabla_s \ka|^2 ds,
$$
where $\nabla_s\phi = \partial_s \phi - \langle \partial_s \phi, \tau \rangle \tau$ indicates the normal part of $\partial_s \phi$. For any $p\geq2$, $\varepsilon >0$, $\delta > 0 $, and $\lambda > 0$ let
\begin{align}\label{energyreg}
	\ER(\g) = \varepsilon \F (\g) + \EPD (\g)+ \lambda \Ell(\g).
\end{align}
The negative $L^2$-gradient flow for the regularized energy $\ER$ is given by 
\begin{align}\label{regflow}
	\partial_t \g = - \nabla_{L^2}\ER(\g)  
\end{align}
where $ \nabla_{L^2}\E^p_{\varepsilon,\delta,\lambda}(f) =\varepsilon \nabla_{L^2}  \F (\g) +  \nabla_{L^2 } \EPD (\g) + \lambda \nabla_{L^2} \Ell(\g)$.
A straightforward calculation yields following \cite{Dziuk2002}
\begin{align}
\nabla_{L^2} \F(\g) &= - \left(\nabla_s^4 \kappa + |\kappa|^2 \nabla_s^2 \kappa + \langle \nabla_s \kappa, \kappa \rangle \nabla_s \kappa  -\tfrac 32 |\nabla_s \kappa |^2 \kappa\right) ,  \label{eq:GradientF1}\\
	\nabla_{L^2} \EPD (\g) & = (\kappa^2 + \delta^2)^{\frac{p-2}{2}} \left(\nabla_s^2 \kappa + |\kappa|^2 \kappa \right) \label{eq:GradientE1}
	\\ 
	&  \quad + (p-2) (\kappa^2 + \delta^2)^{\frac{p-4}{2}} \left( \langle \kappa, \nabla_s^2 \kappa \rangle \kappa + |\nabla_s \kappa|^2 \kappa  + 2 \langle \kappa, \nabla_s \kappa \rangle \nabla_s \kappa \right) \nonumber\\
	&\quad +  (p-4) (p-2) (\kappa^2 + \delta^2)^{\frac{p-6}{2}} \langle \kappa, \nabla_s \kappa\rangle^2 \kappa \nonumber \\
		&  \quad - \tfrac 1p(\kappa^2 + \delta^2)^{\frac{p}{2}} \kappa \nonumber 
\end{align}
and $\nabla_{L^2} \Le(\g) = - \kappa$. (For the convenience of the reader we will give the details of this calculation in the next section.)

This section is devoted to the proof of the following theorem. The proof follows the path paved in \cite{Dziuk2002}.
\begin{theorem} \label{thm:LongTimeExistenceRE}
	For any fixed numbers $p>2$, $\delta > 0$, $\varepsilon>0$, $\lambda >0$, and any smooth initial closed curve $f_0$, there exists a smooth solution $\g_t:\R/\Z\rightarrow \R^n$, $0\leq t < \infty$, to the regularized $L^2$-gradient flow \eqref{regflow} of $\ER$. Furthermore, after reparametrization by arc-length and suitable translation, as $t_i\rightarrow \infty$ the curves $f_{t_i}$ subconverge to a critical point of $\ER$.
\end{theorem}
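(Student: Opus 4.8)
The plan is to follow the path of \cite{Dziuk2002} for the classical elastic flow, the decisive difference being that the regularizing term $\varepsilon\F$ raises the order of the equation from four to six and, crucially, removes all degeneracy: since $\delta>0$, the coefficients $(\ka^2+\delta^2)^{(p-k)/2}$ appearing in \eqref{eq:GradientE1} are smooth, uniformly bounded functions of $\ka$, so we are dealing with a genuinely non-degenerate parabolic problem. The first step is \textbf{short-time existence}. Combining \eqref{eq:GradientF1} and \eqref{eq:GradientE1}, the flow \eqref{regflow} reads $\partial_t\g = \varepsilon\,\gs^4\ka + (\text{lower order})$, whose principal part is $\varepsilon\,\partial_s^6\g$; since $\varepsilon>0$ the symbol is $-\varepsilon\xi^6$ and the system is strictly parabolic of sixth order. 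The only degeneracy is the familiar one in the tangential direction coming from reparametrization invariance (the geometric operator $-\nabla_{L^2}\ER$ is purely normal), which I would remove by adding an artificial tangential term, a DeTurck-type modification that does not change the geometric solution, or by passing to a graph representation over $\g_0$. Standard theory for quasilinear parabolic systems in parabolic Hölder spaces then yields a unique smooth solution on a maximal interval $[0,T_{\max})$, smooth up to $t=0$ since $\g_0$ is smooth.

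Next comes the \textbf{energy estimate}. Differentiating \eqref{energyreg} along the flow gives
\begin{align*}
 \frac{d}{dt}\ER(\g_t) = -\int_{\RZ}|\nabla_{L^2}\ER(\g_t)|^2\,ds \leq 0,
\end{align*}
so $\ER(\g_t)\leq\ER(\g_0)$ for all $t$ and $\int_0^{T_{\max}}\!\int_{\RZ}|\partial_t\g|^2\,ds\,dt\leq\ER(\g_0)$. In particular $\varepsilon\F(\g_t)$, $\EPD(\g_t)$ and $\lambda\Le(\g_t)$ are bounded uniformly in time; the bound on $\F$ controls $\gs\ka$ in $L^2$ and the bound on $\Le$ controls the length from above. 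For the length from below I would combine Fenchel's inequality $\int_{\RZ}|\ka|\,ds\geq 2\pi$ for closed curves with Hölder's inequality to obtain $\EPD(\g_t)\geq\frac1p(2\pi)^p\Le(\g_t)^{1-p}$, which together with the uniform bound on $\EPD$ keeps $\Le(\g_t)$ bounded away from zero. Reparametrizing at each time to constant speed, so that $|\g'(t,x)|=\Le(\g_t)$, then keeps the parametrization fully under control.

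The technical heart, and what I expect to be the \textbf{main obstacle}, is the derivation of uniform-in-time \textbf{a priori estimates} for all normal derivatives of the curvature. Following \cite{Dziuk2002}, I would compute the evolution equations for the quantities $\int_{\RZ}|\gs^m\ka|^2\,ds$; the sixth-order leading term produces a good dissipative contribution of the form $-2\varepsilon\int_{\RZ}|\gs^{m+3}\ka|^2\,ds$, and the task is to absorb every lower-order term generated by $\nabla_{L^2}\F$ and $\nabla_{L^2}\EPD$ into this dissipation. This is precisely where the sixth order pays off: the Gagliardo--Nirenberg interpolation inequalities on the closed, length-controlled curve, together with the $L^2$-bound on $\gs\ka$ coming from the energy estimate, leave enough room to carry out all these absorptions, the smooth $\delta$-regularized coefficients posing no essential difficulty beyond careful bookkeeping. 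Iterating in $m$ yields bounds on $\int_{\RZ}|\gs^m\ka|^2\,ds$, and hence on all derivatives of $\g$ in the constant-speed parametrization, that are \emph{uniform} on $[0,T_{\max})$.

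Finally, these uniform bounds give both \textbf{long-time existence} and \textbf{subconvergence}. If $T_{\max}<\infty$, the uniform smooth bounds allow the solution to be extended past $T_{\max}$, contradicting maximality; hence $T_{\max}=\infty$ and the bounds hold on all of $[0,\infty)$. For the asymptotics, $\int_0^\infty\!\int_{\RZ}|\partial_t\g|^2\,ds\,dt<\infty$ forces a sequence $t_i\to\infty$ with $\|\nabla_{L^2}\ER(\g_{t_i})\|_{L^2}\to0$. Translating so that $\g_{t_i}(0)=0$ and using the arc-length parametrization, the uniform $C^\infty$-bounds and the Arzel\`a--Ascoli theorem extract a subsequence converging smoothly to a limit curve $\g_\infty$, which by the convergence $\nabla_{L^2}\ER(\g_{t_i})\to0$ satisfies $\nabla_{L^2}\ER(\g_\infty)=0$, i.e.\ is a critical point of $\ER$.
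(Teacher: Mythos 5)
Your proposal is correct and follows essentially the same route as the paper (the Dziuk--Kuwert--Schätzle scheme: short-time existence via a DeTurck-type tangential correction, energy monotonicity plus Fenchel's inequality for the two-sided length bounds, Gagliardo--Nirenberg interpolation of the lower-order terms against the $\varepsilon$-dissipation $\varepsilon\int_{\R/\Z}|\nabla_s^{m+3}\kappa|^2\,ds$, extension past any finite $T$, and subconvergence from $\partial_t\g\in L^2_tL^2_x$). The one point to be careful about is that the differential inequality $\tfrac{d}{dt}\int_{\R/\Z}|\nabla_s^m\kappa|^2\,ds+\varepsilon\int_{\R/\Z}|\nabla_s^{m+3}\kappa|^2\,ds\le C$ by itself only yields bounds growing linearly in $T$ (sufficient for long-time existence); to get the genuinely time-uniform bounds your asymptotics step uses, one must, as the paper does, reuse the standard interpolation inequality to convert the dissipation into a damping term $c_0\int_{\R/\Z}|\nabla_s^m\kappa|^2\,ds$.
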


\begin{remark}

\begin{enumerate}
\item Note that the asymptotic behavior stated in this theorem will not be used in the rest of the article. Indeed our proof of asymptotic behavior stated in Theorem \ref{thm:LongTimeExistenceRElastic} does not use the approximations at all.
\item One can prove full convergence of the flow as $t \rightarrow \infty$ using  {\L}ojasiewicz-Simon gradient estimates as in \cite{Chill2009, Pozzetta2020, Mantegazza2021 }. In contrast to that, full convergence for solutions of the negative gradient flow of the energy $\EP$ for $t \rightarrow \infty$ is a completely open problem.
\end{enumerate}
\end{remark}

%%%%%%%
\subsection{Equations of evolution and inequalities}
%%%%%%%

We will use the following toolbox from \cite{Dziuk2002} to derive the evolution equations of geometric quantities. 

 \begin{lemma}[{\cite[Lemma~2.1]{Dziuk2002}}]
 	Let $f: [0,T) \times \R/\Z\rightarrow \R^n$ be a time-dependent curve and $\phi$ any normal field along $f$. If $f$ satisfies $\partial_t f= V+\varphi \tau$, where $V$ is the normal velocity and $\varphi = \langle \partial_t f,\tau \rangle$, then we have
 	\begin{align}
 		\nabla_s \phi & = \partial_s \phi + \langle \phi,\kappa \rangle \tau, \\
 		\partial_t (ds) & = (\partial_s \varphi - \langle \kappa, V \rangle ) ds, \label{eq:eveqds} \\
 		\partial_t\partial_s -\partial_s  \partial_t & = (\langle \kappa, V \rangle - \partial_s \varphi) \partial_s, \\
 		\partial_t \tau &= \nabla_s V + \varphi \cdot \kappa ,\\
 		\partial_t \phi & = \nabla_t \phi - \langle \nabla_s V + \varphi k, \phi \rangle \tau, \label{eq:eveqphi} \\
 		\nabla_t \kappa &= \nabla_s^2 V + \langle \kappa, V \rangle \kappa + \varphi \cdot \nabla_s \kappa ,\label{eq:eveqkappa}\\
 		(\nabla_t\nabla_s - \nabla_s \nabla_t) \phi & = (\langle \kappa, V \rangle - \partial_s \varphi) \nabla_s \phi + \langle \kappa, \phi \rangle \nabla_s V - \langle \nabla_s V, \phi\rangle \cdot \kappa .\label{eq:eveqts}
 	\end{align}
 \end{lemma}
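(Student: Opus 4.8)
The plan is to verify each identity by direct differentiation of the defining geometric relations, proceeding in the listed order so that each formula may use its predecessors. Throughout I write $\partial_s = |f'|^{-1}\partial_x$, $\tau = \partial_s f$, $\kappa = \partial_s \tau$, and $ds = |f'|\,dx$, and I use that a field $\psi$ along $f$ is normal precisely when $\langle \psi,\tau\rangle = 0$. The first identity is immediate: for normal $\phi$ one differentiates $\langle\phi,\tau\rangle = 0$ in arc length to get $\langle\partial_s\phi,\tau\rangle = -\langle\phi,\kappa\rangle$, and since $\nabla_s\phi$ is by definition the normal part $\partial_s\phi - \langle\partial_s\phi,\tau\rangle\tau$, the claim follows. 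For \eqref{eq:eveqds} I differentiate $ds = |f'|\,dx$ in time, write $\partial_t|f'| = \langle\tau,\partial_t\partial_x f\rangle$, substitute $\partial_t f = V + \varphi\tau$ together with $\partial_x = |f'|\partial_s$, and use the normality of $V$ and $\langle\tau,\kappa\rangle = 0$ to collapse the expression to $(\partial_s\varphi - \langle\kappa,V\rangle)\,ds$. The commutator identity then drops out of $\partial_s = |f'|^{-1}\partial_x$ and the fact that $\partial_x$ and $\partial_t$ commute: the only surviving term is $-(\partial_t|f'|/|f'|)\partial_s$, which \eqref{eq:eveqds} identifies with $(\langle\kappa,V\rangle - \partial_s\varphi)\partial_s$.

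Next I would treat the evolution of the frame. For $\partial_t\tau$ I write $\partial_t\tau = \partial_t\partial_s f = \partial_s\partial_t f + [\partial_t,\partial_s]f$, expand $\partial_s\partial_t f = \partial_s(V + \varphi\tau)$, insert the commutator from the previous step, and recognize the resulting tangential correction as exactly the one converting $\partial_s V$ into $\nabla_s V$ through the first identity; this yields $\partial_t\tau = \nabla_s V + \varphi\kappa$. Identity \eqref{eq:eveqphi} is then the time analogue of the first identity: differentiating $\langle\phi,\tau\rangle = 0$ in $t$ gives $\langle\partial_t\phi,\tau\rangle = -\langle\phi,\partial_t\tau\rangle$, and substituting the formula for $\partial_t\tau$ just obtained expresses the tangential part of $\partial_t\phi$ as claimed.

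For the curvature evolution \eqref{eq:eveqkappa} I would compute $\partial_t\kappa = \partial_t\partial_s\tau = \partial_s\partial_t\tau + [\partial_t,\partial_s]\tau$, substitute $\partial_t\tau = \nabla_s V + \varphi\kappa$, and repeatedly use the first identity in the rearranged form $\partial_s\psi = \nabla_s\psi - \langle\psi,\kappa\rangle\tau$ to trade $\partial_s$ for $\nabla_s$ on the normal fields $\nabla_s V$ and $\kappa$. Collecting terms, the $\kappa$-coefficients combine to $\langle\kappa,V\rangle$, and taking the normal part, i.e.\ discarding the tangential component $-(\langle\nabla_s V,\kappa\rangle + \varphi|\kappa|^2)\tau$, leaves $\nabla_t\kappa = \nabla_s^2 V + \langle\kappa,V\rangle\kappa + \varphi\nabla_s\kappa$.

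The last identity \eqref{eq:eveqts} is the one I expect to be the main obstacle, since it is the commutator of the normal covariant derivatives in the $(t,s)$ variables and demands careful separation of tangential and normal components (and it must be handled with $\nabla$ the full normal connection, as the normal bundle is nontrivial for $n \ge 3$). My approach is to set $A := \partial_t\tau = \nabla_s V + \varphi\kappa$ and expand both $\nabla_t\nabla_s\phi$ and $\nabla_s\nabla_t\phi$ using the first identity and its time analogue \eqref{eq:eveqphi}, so that each becomes $\partial_t\partial_s\phi$ (respectively $\partial_s\partial_t\phi$) plus explicit $\tau$- and $\kappa$-corrections. Subtracting, the second-derivative terms produce $[\partial_t,\partial_s]\phi = (\langle\kappa,V\rangle - \partial_s\varphi)\partial_s\phi$ via the commutator identity established above, while the remaining correction terms assemble into $\langle\kappa,\phi\rangle A - \langle\phi,A\rangle\kappa = \langle\kappa,\phi\rangle\nabla_s V - \langle\nabla_s V,\phi\rangle\kappa$ once the $\varphi\kappa$ contributions cancel. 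Rewriting $\partial_s\phi = \nabla_s\phi - \langle\phi,\kappa\rangle\tau$ converts the first term into $(\langle\kappa,V\rangle - \partial_s\varphi)\nabla_s\phi$ plus a $\tau$-term. Since the left-hand side is a difference of normal fields and hence normal, all $\tau$-components must cancel, which both confirms the computation and lets me read off the stated formula from the normal part alone.
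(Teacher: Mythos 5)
Your proof is correct: each identity checks out (including the commutator \eqref{eq:eveqts}, where the cancellation of the $\varphi\kappa$-contributions in $\langle\kappa,\phi\rangle A-\langle\phi,A\rangle\kappa$ and the automatic vanishing of the $\tau$-components by normality of the left-hand side are exactly right), and it is the standard direct computation behind this lemma. The paper itself gives no proof, importing the statement verbatim from \cite[Lemma~2.1]{Dziuk2002}, and your argument coincides with the one in that reference, so there is nothing to add.
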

 
 Using the formulas above, a straightforward calculation yields the formulas for the $L^2$ gradients of $\F$ and $\EPD$ given in \eqref{eq:GradientF1} and \eqref{eq:GradientE1}. Equations \eqref{eq:eveqds} and \eqref{eq:eveqkappa} tell us that
 \begin{align*}
  \partial_t \EPD(\g_t) 
  &  = \int_{\RZ} (\kappa^2 + \delta ^2 )^{\frac {p-2} 2 } \langle \kappa , \partial_t \kappa \rangle ds + \frac 1p \int_{\RZ} ( \kappa ^2  + \delta ^2) ^{\frac p2} \partial_t (ds) \\
  & = \int_{\RZ} (\kappa ^2 + \delta ^2 ) ^{\frac {p-2} 2 } \langle \kappa, \nabla_s ^2 V \rangle ds+  \int_{\RZ}  (\kappa ^2 + \delta ^2 ) ^{\frac {p-2} 2 } |\kappa|^2 \langle \kappa, V \rangle  ds \\
  & \quad - \frac 1p   \int_{\RZ} ( \kappa^2 + \delta^2 ) ^{\frac p2} \langle \kappa, V \rangle ds \\
  & = \int_{\RZ} \langle \nabla_s^2 ( (\kappa ^2 + \delta ^2 ) ^{\frac {p-2} 2 } \kappa),V \rangle ds+  \int_{\RZ}  (\kappa ^2 + \delta ^2 ) ^{\frac {p-2} 2 } |\kappa|^2 \langle \kappa, V \rangle  ds \\
  & \quad - \frac 1p  \int_{\RZ} ( \kappa^2 + \delta^2 ) ^{\frac p2} \langle \kappa, V \rangle ds,
\end{align*}
where we used integration by parts in the last step. So we get
$$
 \nabla_{L^2} \EPD (\g) = \nabla_s ^2 ((\kappa^2 + \delta ^2 )^{\frac {p-2} 2 }  \kappa ) + (\kappa^2 + \delta^2 )^{\frac {p-2} 2} |\kappa|^2 \kappa - \frac 1p (\kappa^2 + \delta ^2 )^{\frac p2} \kappa. 
$$
Together with 
\begin{align*}
 \nabla_s ^2 ((\kappa^2 + \delta ^2 )^{\frac {p-2} 2 }  \kappa ) & = (\kappa^2 + \delta ^2 )^{\frac {p-2} 2 }\nabla^2_s \kappa + 2 (p-2) (\kappa^2 + \delta^2)^{\frac {p-4} 2 } \langle \kappa, \nabla_s \kappa \rangle \nabla_s \kappa 
 \\ & \quad +  (p-2)  \big((\kappa^2 + \delta^2 )^{\frac {p-4} 2 } \langle \kappa, \nabla^2_s \kappa \rangle 
 +  (\kappa^2 + \delta ^2)^{\frac {p-4} 2 } |\nabla_s \kappa|^2 
 \\ & \qquad+ (p-4) (\kappa^2 + \delta^2) ^{\frac {p-6} 2} \langle \kappa, \nabla_s \kappa \rangle^2 \big) \kappa
\end{align*}
this proves \eqref{eq:GradientE1}.

To calculate the $L^2$ gradient of $\F$ we first observe using \eqref{eq:eveqds}
\begin{align*}
 \partial_t \F (\g) & = \int_{\RZ} \langle \nabla_s \kappa, \nabla_t \nabla_s \kappa\rangle ds + \frac 1 2 \int_{\RZ} |\nabla_s \kappa|^2 \partial_t (ds) 
 \\ &  =  \int_{\RZ} \langle \nabla_s \kappa, \nabla_t \nabla_s \kappa\rangle ds - \frac 1 2 \int_{\RZ} |\nabla_s \kappa|^2 \langle \kappa, V \rangle ds
\end{align*}
From \eqref{eq:eveqts} and \eqref{eq:eveqkappa} we get
\begin{align*}
 \nabla_t \nabla_s \kappa &= \nabla_s (\nabla^2_s V + \langle \kappa, V \rangle \kappa) +  \langle \kappa, V \rangle \nabla_s \kappa + |\kappa|^2 \nabla_s V - \langle \nabla_s V , \kappa \rangle \kappa
 \\ & = \nabla^3 _s V + \langle \nabla_s \kappa, V \rangle \kappa + 2 \langle \kappa , V \rangle \nabla_s \kappa + |\kappa|^2 \nabla_s V
\end{align*}
and hence 
\begin{align*}
 \partial_t \F (\g)  &  =  \int_{\RZ} \langle \nabla_s \kappa,\nabla^3 _s V \rangle ds + \int_{\RZ} |\kappa|^2 \langle \nabla_s \kappa, \nabla_s V \rangle ds  
 \\ & \quad + \int_{\RZ} \langle(\langle \nabla \kappa, \kappa \rangle \nabla \kappa + 2|\nabla_s \kappa|^2 \kappa ), V \rangle ds - \frac 1 2 \int_{\RZ} |\nabla_s \kappa|^2 \langle \kappa, V \rangle ds \\
 & =  - \int_{\RZ} \langle \nabla^4_s \kappa,V \rangle ds - \int_{\RZ} \langle \nabla_s (|\kappa|^2  \nabla_s \kappa), V \rangle ds  
 \\ & \quad + \int_{\RZ} \langle(\langle \nabla \kappa, \kappa \rangle \kappa + \frac 32|\nabla_s \kappa|^2 \kappa ), V \rangle ds,
\end{align*}
where we used integration by parts in the last step. From this we can read off that
\begin{align*}
 \nabla_{L^2} \F (\g) &= - \nabla_s ^4 \kappa - \nabla_s (|\kappa|^2 \nabla_s \kappa) + \langle \nabla_s \kappa, \kappa \rangle \nabla_s \kappa + \frac 32 |\nabla_s \kappa|^2 \kappa 
 \\ & = - \nabla_s ^4 \kappa - |\kappa|^2 \nabla^2_s \kappa -  \langle \kappa, \nabla_s \kappa \rangle \nabla_s \kappa  + \frac 32 |\nabla_s \kappa|^2 \kappa 
\end{align*}
Thus Equation \eqref{eq:GradientF1} is proven. 

% notation

As often the precise algebraic form of the terms does not matter, we will use the $\ast$ notation introduce in \cite{Dziuk2002} to shorten the notation.
For vectors $\phi_1, \ldots, \phi_k$ along $\g$ we denote by $\phi_1 \ast \cdots \ast \phi_k$ any multilinear combination of these vectors.
 For $p>2$ and $\delta >0$ we let $\psi : \R\rightarrow \R$ be given by $\psi (x) = (x^2 + \delta^2)^{\frac p2}$.
For a vector field $\phi$, we denote by $P^{\mu}_{\nu}(\phi)$ any linear combination of terms of the type 
$$
(\psi^{(i_0)}(|\kappa|) )^j\gs^{i_1}\phi\ast\cdots\ast\gs^{i_\nu}\phi
$$
 with universal constant coefficients where $\mu$ is greater than or equal to the total number of derivatives $i_1+\cdots+i_\nu$, $i_l \in\N_0$, and $j\in \{0,1\}$. 
If $P^\mu_{\nu}(\phi)$ only contains terms with derivatives of $\phi$ up to order $\omega\in\N$, we indicate that by writing $P^{\mu,\omega}_{\nu} (\phi)$ if needed. We will write $P^{\mu}(\phi)$ for any linear combination of terms $P^{\mu}_\nu$, $\nu \in \mathbb N$, and $P^{\mu, \omega}(\phi)$ for any linear combination of terms $P^{\mu, \omega}_{\nu}$, $\nu \in \mathbb N$. 

Note that the factor $(\psi^{(i_0)}(\kappa) )^j$ hardly plays a role in the upcoming computations as along the flow $\F$ is bounded by the initial energy and hence by Sobolev's embeddings $\|\kappa\|_{L^\infty}$ is bounded as well. Hence, the factor is controlled by a constant dependent on $\delta$. Notice also that the formula $\gs P^\mu(\kappa) = P^{\mu+1}(\kappa)$ holds and that $P^\mu(\kappa)$ can contain summands  $P^{\tilde{\mu}} (\kappa)$ of lower order $0 \leq \tilde{\mu} \leq \mu-1$.

Using this notation, the evolution equation \eqref{regflow} reduces to 
$$
 \partial_t f = \varepsilon(\gs^4\kappa + P^2(\kappa)) + \lambda \kappa + P^2(\kappa).
$$

We will use the following lemma to capture the main structure of the evolution equation of the $L^2$ norm of higher derivatives of $\kappa$.
\begin{lemma}\label{lem:eveq1}
	Suppose $f:[0,T) \times \R/\Z\rightarrow \R^n$ moves in a normal direction with velocity $\ft f = V$, $\phi$ is a normal vector field along $f$, $\delta >0$, and $\gt\phi -\varepsilon \gs^6 \phi = Y$. Then 
	\begin{align}\label{eq:eveqintphi}
		\frac{d}{dt} \frac 12 \int_{\R/\Z} |\phi|^2 ds + \varepsilon \int_{\R/\Z} |\gs^3\phi|^2 ds = \int_{\R/\Z} \langle Y,\phi\rangle ds - \frac 12 \int_{\R/\Z} |\phi|^2 \langle\kappa,V\rangle ds.
	\end{align}
	Furthermore $\chi = \gs\phi$ satisfies the equation
	\begin{align}\label{eq:eveqchi}
		\gt \chi - \varepsilon \gs^6\chi= \gs Y + \langle \phi,\kappa\rangle\gs V - \langle \phi, \gs V\rangle \kappa + \langle \kappa, V\rangle \chi
	\end{align}
\end{lemma}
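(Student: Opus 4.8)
The plan is to obtain both identities directly from the preceding toolbox lemma, specialized to the present purely normal motion. The crucial simplification is that $\ft f = V$ is normal, so the tangential speed vanishes, $\varphi = \langle \ft f, \tau\rangle = 0$. Every formula of that lemma then loses its $\varphi$-terms; in particular \eqref{eq:eveqds} becomes $\ft(ds) = -\langle\kappa,V\rangle\,ds$, equation \eqref{eq:eveqphi} becomes $\ft\phi = \gt\phi - \langle\gs V,\phi\rangle\tau$, and the commutator \eqref{eq:eveqts} reads $(\gt\gs - \gs\gt)\phi = \langle\kappa,V\rangle\gs\phi + \langle\kappa,\phi\rangle\gs V - \langle\gs V,\phi\rangle\kappa$. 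These three specialized formulas are the only inputs.

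For the first identity I would differentiate under the integral sign, splitting $\tfrac{d}{dt}\tfrac12\int_{\RZ}|\phi|^2\,ds$ into the contribution from $\ft|\phi|^2$ and that from $\ft(ds)$. Since $\phi$ is normal and $\ft\phi - \gt\phi = -\langle\gs V,\phi\rangle\tau$ is tangential, one has $\langle\ft\phi,\phi\rangle = \langle\gt\phi,\phi\rangle$, so the first contribution is $\int_{\RZ}\langle\gt\phi,\phi\rangle\,ds$; the second is $-\tfrac12\int_{\RZ}|\phi|^2\langle\kappa,V\rangle\,ds$ by the formula for $\ft(ds)$. Inserting the hypothesis $\gt\phi = Y + \varepsilon\gs^6\phi$ turns the first term into $\int_{\RZ}\langle Y,\phi\rangle\,ds + \varepsilon\int_{\RZ}\langle\gs^6\phi,\phi\rangle\,ds$, and integrating by parts three times rewrites $\int_{\RZ}\langle\gs^6\phi,\phi\rangle\,ds = -\int_{\RZ}|\gs^3\phi|^2\,ds$. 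Collecting terms and moving the $\varepsilon$-term to the left-hand side yields \eqref{eq:eveqintphi}.

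The one step deserving a word of care — and the only genuine subtlety — is the integration by parts for the \emph{normal} covariant derivative along the closed curve. Here I would record that for normal fields $A,B$ one has $\fs\langle A,B\rangle = \langle\gs A,B\rangle + \langle A,\gs B\rangle$: the tangential corrections $\langle A,\kappa\rangle\tau$ relating $\fs$ and $\gs$ are orthogonal to the normal fields and drop out of the inner product, and since $\gs A,\gs B$ are again normal this compatibility iterates freely. Integrating over $\RZ$ annihilates the total $\fs$-derivative by periodicity, giving $\int_{\RZ}\langle\gs A,B\rangle\,ds = -\int_{\RZ}\langle A,\gs B\rangle\,ds$, which is exactly what the threefold integration by parts requires. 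Everything else is bookkeeping; the thing to stay vigilant about throughout is which $\phi$-terms are normal, so that both this integration by parts and the identification $\langle\ft\phi,\phi\rangle = \langle\gt\phi,\phi\rangle$ remain valid.

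For the second identity I would compute $\gt\chi = \gt\gs\phi$ from the specialized commutator above, obtaining $\gt\gs\phi = \gs\gt\phi + \langle\kappa,V\rangle\gs\phi + \langle\kappa,\phi\rangle\gs V - \langle\gs V,\phi\rangle\kappa$. Substituting $\gt\phi = Y + \varepsilon\gs^6\phi$ and using $\gs\gs^6\phi = \gs^6\chi$ gives $\gt\chi = \gs Y + \varepsilon\gs^6\chi + \langle\kappa,V\rangle\chi + \langle\kappa,\phi\rangle\gs V - \langle\gs V,\phi\rangle\kappa$; moving $\varepsilon\gs^6\chi$ to the left and recalling $\chi = \gs\phi$ reproduces \eqref{eq:eveqchi} verbatim. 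I expect no real obstacle in this part — it is a direct application of the commutator — so the whole lemma reduces to the two computations above together with the normal integration-by-parts rule.
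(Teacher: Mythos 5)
Your proposal is correct and follows exactly the route the paper indicates: it specializes the toolbox identities \eqref{eq:eveqds}, \eqref{eq:eveqphi}, and \eqref{eq:eveqts} to the purely normal motion $\varphi=0$ and then integrates by parts, which is precisely the (only sketched) argument "analogous to Dziuk--Kuwert--Schätzle" that the paper gives. Your explicit justification of the integration by parts for $\gs$ on normal fields is a welcome detail the paper leaves implicit.
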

\begin{proof} Analogous to \cite[Lemma~2.2]{Dziuk2002}, \eqref{eq:eveqintphi} follows from the evolution equations \eqref{eq:eveqds} and \eqref{eq:eveqphi} as well as \eqref{eq:eveqchi} from \eqref{eq:eveqts}. 
\end{proof}

We observe as in {\cite[Lemma~2.3]{Dziuk2002}}:
\begin{lemma}\label{lem:eveq2}
	Suppose $\partial_t f = \varepsilon(\gs^4\kappa + P^2(\kappa)) + \lambda \kappa + P^2(\kappa)$, where $\varepsilon,\lambda>0$. Then for any $m\geq 0$ the derivatives of the curvature $\phi_m = \gs^m\kappa$ satisfy
	\begin{align*}
		\gt \phi_m - \varepsilon \gs^6\phi_m = \varepsilon P^{m+4}(\kappa) + \lambda (\gs^{m+2}\kappa + P^m(\kappa)) + P^{m+4}(\kappa).
	\end{align*}
\end{lemma}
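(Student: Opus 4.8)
The plan is to argue by induction on $m$, using the toolbox of Lemma~\ref{lem:eveq1}: equation \eqref{eq:eveqchi} tells us exactly how the evolution operator $\gt - \varepsilon \gs^6$ acts on $\gs \phi$ once we know how it acts on $\phi$, so a single differentiation step carries the formula from order $m$ to order $m+1$. The base case will come directly from the evolution equation \eqref{eq:eveqkappa} for $\kappa$.

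For the base case $m=0$, I would start from \eqref{eq:eveqkappa}. Since the flow \eqref{regflow} is a normal flow, the tangential velocity $\varphi$ vanishes and $V=\ft f = \varepsilon(\gs^4\kappa + P^2(\kappa)) + \lambda\kappa + P^2(\kappa)$, so \eqref{eq:eveqkappa} reads $\gt\kappa = \gs^2 V + \langle \kappa, V\rangle \kappa$. Applying $\gs^2$ and using $\gs P^\mu(\kappa)=P^{\mu+1}(\kappa)$ gives $\gs^2 V = \varepsilon \gs^6\kappa + \varepsilon P^4(\kappa) + \lambda\gs^2\kappa + P^4(\kappa)$, while the zeroth-order factor produces $\langle \kappa,V\rangle\kappa = \varepsilon P^4(\kappa) + \lambda P^0(\kappa) + P^4(\kappa)$ (here the top derivative $\gs^4\kappa$ of $V$ now sits inside a product $\kappa \ast \gs^4\kappa \ast \kappa$, i.e.\ as a $P^4$-term). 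Collecting terms by their prefactor and moving $\varepsilon\gs^6\kappa$ to the left yields precisely $\gt\kappa - \varepsilon\gs^6\kappa = \varepsilon P^4(\kappa) + \lambda(\gs^2\kappa + P^0(\kappa)) + P^4(\kappa)$, the $m=0$ case.

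For the inductive step, suppose the claim holds for $\phi=\phi_m$, so that $Y := \gt\phi_m - \varepsilon\gs^6\phi_m$ equals the asserted right-hand side. I apply \eqref{eq:eveqchi} to $\chi = \gs\phi_m = \phi_{m+1}$, giving $\gt\phi_{m+1} - \varepsilon\gs^6\phi_{m+1} = \gs Y + \langle \phi_m,\kappa\rangle \gs V - \langle \phi_m, \gs V\rangle \kappa + \langle \kappa, V\rangle \phi_{m+1}$. Differentiating the inductive hypothesis yields $\gs Y = \varepsilon P^{m+5}(\kappa) + \lambda(\gs^{m+3}\kappa + P^{m+1}(\kappa)) + P^{m+5}(\kappa)$, which already carries the only genuine top-order linear term $\lambda\gs^{(m+1)+2}\kappa$. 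Each of the three remaining terms is a product of a factor of order $\leq m+1$ with $V$ or $\gs V$; writing out $V = \varepsilon\gs^4\kappa + \lambda\kappa + P^2(\kappa)$ (and $\gs V$ accordingly) and multiplying out, every summand lands in $\varepsilon P^{m+5}(\kappa)$, $\lambda P^{m+1}(\kappa)$, or $P^{m+5}(\kappa)$. Summing the four pieces and using $m+5=(m+1)+4$ and $m+3=(m+1)+2$ produces the formula at level $m+1$.

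The only real work is the bookkeeping inside the $P^\mu_\nu$-calculus: one must check that taking scalar products of fields, multiplying by $V$ or $\gs V$, and applying $\gs$ all add derivative orders correctly, and that the three coefficients ($\varepsilon$, $\lambda$, and the unlabelled constant) propagate faithfully. The one structural point to watch---and the reason the statement is phrased this way---is that the linear top-order term must appear with prefactor $\lambda$ and order exactly $m+2$; the crux is to confirm that all other $\lambda$-contributions arising in the inductive step (they all stem from the $\lambda\kappa$ part of $V$) are of order at most $m+1$, hence are absorbed into the harmless lower-order slot $\lambda P^{m+1}(\kappa)$ and do not generate a second top-order linear term.
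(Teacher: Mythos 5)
Your proof is correct and follows essentially the same route as the paper: the base case $m=0$ from \eqref{eq:eveqkappa} with vanishing tangential velocity, and the inductive step by applying \eqref{eq:eveqchi} to $\chi=\gs\phi_m$ and absorbing the three commutator terms into the $P$-calculus. You merely spell out the base case and the $\lambda$-bookkeeping in more detail than the paper does.
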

\begin{proof}
	The case of $m=0$ is a consequence of \eqref{eq:eveqkappa}. By applying \eqref{eq:eveqchi} we obtain inductively
	\begin{align*}
		&\gt \phi_{m+1} - \varepsilon \gs^6\phi_{m+1} \\
		&= \gs (\gt \phi_{m} - \varepsilon \gs^6\phi_{m}) + (\langle \phi_m,\kappa\rangle\gs \partial_t f - \langle \phi_m, \gs \partial_t f\rangle \kappa + \langle \kappa, \partial_t f\rangle \phi_{m+1}) \\
		& = \varepsilon P^{m+5}(\kappa) + \lambda (\gs^{m+3}\kappa + P^{m+1}(\kappa)) + P^{m+5}(\kappa) \\
		& \hspace{1em} + (\varepsilon P^{m+5}(\kappa) + \lambda P^{m+1}(\kappa) + P^{m+5}(\kappa)) \\
		& = \varepsilon P^{m+5}(\kappa) + \lambda (\gs^{m+3}\kappa + P^{m+1}(\kappa)) + P^{m+5}(\kappa),
	\end{align*}
	which proves the lemma.
\end{proof}

Counting the number of factors containing $\gs\kappa$ or higher derivatives thereof, we obtain the following fact about terms of type $P^{\mu, \omega}(\kappa).$
\begin{lemma}\label{lem:kappatogs}
	Let $f: \R/\Z \rightarrow \R^n$ be a smooth closed curve and $\mu\in\N$. Then we have
	\begin{align}\label{eq:Pkappa}
		P^{\mu,\omega}(\kappa) = \sum_{k=1}^{\mu}P^{\mu - k,\omega-1}_{k} (\gs \kappa) \ast P^0(\kappa) + P^0(\kappa),
	\end{align}
	for any $\omega \in \N$, $1 \leq \omega \leq \mu$.
\end{lemma}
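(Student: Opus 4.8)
The plan is to read \eqref{eq:Pkappa} as an inclusion of the set of admissible terms of type $P^{\mu,\omega}(\kappa)$ into the set of terms described by the right-hand side, and to verify it generator by generator. Since every expression of type $P^{\mu,\omega}(\kappa)$ is by definition a finite linear combination of monomials, and both sides of \eqref{eq:Pkappa} are closed under such linear combinations, it suffices to rewrite a single generic monomial
$$
(\psi^{(i_0)}(|\kappa|))^j \, \gs^{i_1}\kappa \ast \cdots \ast \gs^{i_\nu}\kappa, \qquad \sum_{l=1}^{\nu} i_l \leq \mu, \quad 0 \leq i_l \leq \omega, \quad j \in \{0,1\}, \quad \nu \in \N.
$$
The whole argument is then just a matter of sorting the factors $\gs^{i_l}\kappa$ according to whether they carry a derivative, which is exactly the ``counting the number of factors containing $\gs\kappa$ or higher derivatives'' announced before the lemma.

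Concretely, I would partition the index set $\{1,\dots,\nu\}$ into the indices $l$ with $i_l = 0$ and those with $i_l \geq 1$, and set $k := \#\{\,l : i_l \geq 1\,\}$. If $k = 0$, every factor is an undifferentiated $\kappa$, so the monomial together with the scalar prefactor $(\psi^{(i_0)})^j$ is of type $P^0(\kappa)$; this accounts for the isolated summand $+\,P^0(\kappa)$ on the right. If $k \geq 1$, relabel so that $i_1,\dots,i_k \geq 1$ while $i_{k+1}=\cdots=i_\nu = 0$, and rewrite each differentiated factor as $\gs^{i_l}\kappa = \gs^{i_l - 1}(\gs\kappa)$. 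The product of these $k$ factors is a $k$-linear expression in $\gs\kappa$ whose individual derivative orders are $i_l - 1 \in \{0,\dots,\omega-1\}$ (here the hypothesis $\omega \geq 1$ is exactly what guarantees $i_l - 1 \geq 0$, i.e.\ that a $\gs\kappa$ can be split off), and whose total derivative count is
$$
\sum_{l=1}^{k}(i_l - 1) \;=\; \Bigl(\sum_{l=1}^{k} i_l\Bigr) - k \;\leq\; \mu - k.
$$
Hence this part is of type $P^{\mu-k,\omega-1}_{k}(\gs\kappa)$, while the remaining $\nu - k$ undifferentiated factors $\kappa \ast \cdots \ast \kappa$ together with $(\psi^{(i_0)})^j$ form a factor of type $P^0(\kappa)$. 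Thus the monomial is of the form $P^{\mu-k,\omega-1}_{k}(\gs\kappa)\ast P^0(\kappa)$.

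It remains to pin down the range of $k$: since each of the $k$ differentiated factors contributes at least one derivative, one has $k \leq \sum_{l=1}^{k} i_l \leq \mu$, so $k$ runs only over $\{1,\dots,\mu\}$, which is precisely the summation range in \eqref{eq:Pkappa}. Collecting the $k=0$ and $k\geq 1$ contributions over all generators yields the claimed decomposition. I do not expect any genuine obstacle here; the only point requiring care is the index bookkeeping, namely that shifting one derivative into each of the $k$ differentiated factors lowers the total-derivative superscript from $\mu$ to $\mu - k$ and the order superscript from $\omega$ to $\omega - 1$, and that $\omega \geq 1$ is needed for the splitting to be legitimate while $\omega \leq \mu$ is the harmless normalization under which the order constraint is actually active.
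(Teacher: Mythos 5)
Your argument is correct and is essentially the paper's own proof: both sort the factors of a generic monomial by whether they carry a derivative, relabel so the $k$ differentiated factors come first, split one $\gs$ off each to view them as a $P^{\mu-k,\omega-1}_{k}(\gs\kappa)$ term, and absorb the undifferentiated factors (and the $\psi$-prefactor) into $P^0(\kappa)$, with $k\leq\mu$ from the derivative count. Your explicit treatment of the $k=0$ case is a small clarification the paper leaves implicit, but the route is the same.
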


\begin{proof}
A term of type $P^{\mu, \omega}(\kappa)$ consists of a linear combination of terms 
$$
(\psi^{(i_0)}(|\kappa|) )^j\gs^{i_1}\kappa\ast\cdots\ast\gs^{i_\nu}\kappa
$$
 with universal constant coefficients where $\mu$ is greater than or equal to the total number of derivatives $i_1+\cdots+i_\nu$, $i_l \in \{0, \ldots, \omega\}$, and $j\in \{0,1\}$. Changing the order of indices we can achieve that $i_1, \ldots, i_k \geq 1$ and $i_{k+1}, \ldots, i_\nu =0$. Since $i_1 + \cdots + i_k \leq \mu$, we obtain $k \leq \mu$. Hence,
\begin{align*}
 (\psi^{(i_0)}(|\kappa|) )^j\gs^{i_1}\kappa\ast\cdots\ast\gs^{i_\nu}\kappa & = ((\psi^{(i_0)}(|\kappa|) )^j\gs^{i_1}\kappa\ast\cdots\ast\gs^{i_k}\kappa) ( \gs^{i_{k+1}}\kappa\ast\cdots\ast\gs^{i_\nu} )
 \\ & = P^{\mu - k,\omega-1}_k (\nabla_s \kappa) \ast P^0 (\kappa).
\end{align*}
\end{proof}

In a next step, we state a variant of the Gagliardo-Nirenberg interpolation inequalities for higher order curvature functionals for curves in $\R^n$. For that we define scale invariant norms $\|\kappa\|_{k,q} = \sum_{i=0}^k \|\gs^i \kappa \|_q$, where
\begin{align}\label{def:scaleinvariantnorm}
	\|\gs^i \kappa \|_q = \Ell(f)^{i+1-\tfrac 1q} \left(\int_{\R/\Z} |\gs^i \kappa |^q ds \right)^{\tfrac 1q}.
\end{align}

\begin{lemma}[{\cite[Lemma~2.4]{Dziuk2002}}]\label{lem:interpolationkappa}
	For any smooth closed curve $f: \R/\Z \rightarrow \R^n$ and any $k\in\N$, $q\geq 2$, and $0 \leq i < k$ we have 
	\begin{align*}
		\| \gs^i \kappa \|_q \leq c \|\kappa\|_2^{1-\alpha} \|\kappa\|^\alpha_{k,2},
	\end{align*}
	where $\alpha = (i+\tfrac 12 - \tfrac 1q)/k$ and $c=c(n,k,q)$.
\end{lemma}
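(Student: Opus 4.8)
The plan is to exploit the built-in scale invariance of both sides of the asserted estimate and thereby reduce to a closed curve of unit length, where the inequality becomes the classical one-dimensional Gagliardo--Nirenberg interpolation inequality applied to the normal fields $\gs^i\kappa$.

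First I would verify that each scale-invariant norm $\|\gs^i\kappa\|_q$ from \eqref{def:scaleinvariantnorm} is genuinely invariant under the homothety $f\mapsto\rho f$, $\rho>0$. Under this map one has $\kappa\mapsto\rho^{-1}\kappa$, the covariant derivative scales as $\gs\mapsto\rho^{-1}\gs$, and $ds\mapsto\rho\,ds$, so the integral factor $\big(\int_{\R/\Z}|\gs^i\kappa|^q\,ds\big)^{1/q}$ acquires a factor $\rho^{1/q-(i+1)}$ while $\Ell(f)^{i+1-1/q}$ acquires the reciprocal factor $\rho^{i+1-1/q}$; their product is unchanged. Choosing $\rho=\Ell(f)^{-1}$ I may therefore assume $\Ell(f)=1$, in which case the scale-invariant norms collapse to the ordinary ones, $\|\gs^i\kappa\|_q=\|\gs^i\kappa\|_{L^q(\R/\Z)}$ and $\|\kappa\|_{k,2}=\sum_{j=0}^k\|\gs^j\kappa\|_{L^2(\R/\Z)}$, and it remains to prove
\[
 \|\gs^i\kappa\|_{L^q}\le c\,\|\kappa\|_{L^2}^{1-\alpha}\Big(\sum_{j=0}^k\|\gs^j\kappa\|_{L^2}\Big)^{\alpha},\qquad \alpha=\frac{i+\tfrac12-\tfrac1q}{k},
\]
on a closed curve of length one.

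The conceptual point that makes the scalar theory applicable is that $\gs$ is compatible with the metric on the normal bundle, i.e. $\partial_s\langle a,b\rangle=\langle\gs a,b\rangle+\langle a,\gs b\rangle$ for normal fields $a,b$. From this I obtain both the pointwise bound $|\partial_s|\gs^i\kappa||\le|\gs^{i+1}\kappa|$ and, because the curve is closed, the integration-by-parts identity $\int\langle\gs^{i+1}a,b\rangle\,ds=-\int\langle\gs^i a,\gs b\rangle\,ds$ with no boundary contribution. These are precisely the two ingredients on which the classical proof of Gagliardo--Nirenberg for periodic functions rests (beyond the fundamental theorem of calculus and H\"older's and Young's inequalities), so that proof transcribes essentially verbatim with $\partial_s$ replaced by $\gs$ and run on the normal fields $\gs^i\kappa$. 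The exponent is forced by the usual scaling count $\tfrac1q=i+\alpha(\tfrac12-k)+(1-\alpha)\tfrac12$, i.e. $\alpha=(i+\tfrac12-\tfrac1q)/k$, in agreement with the invariance computation above; and since $0\le i<k$ and $q\ge2$ one checks $i/k\le\alpha\le1$, so the interpolation exponent is admissible.

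The step I expect to require the most care is ensuring that the inequality holds in multiplicative form \emph{without} an additive lower-order term on the compact curve. This is exactly why it is essential that the full norm $\sum_{j=0}^k\|\gs^j\kappa\|_{L^2}$, rather than only the top seminorm $\|\gs^k\kappa\|_{L^2}$, stands on the right: already a round circle has $\gs\kappa\equiv0$ while $\kappa\not\equiv0$, so the top seminorm alone cannot dominate the lower-order content of $\kappa$, and an inequality built on it would necessarily carry an additive correction. With the complete norm present this obstruction disappears. The remaining work---propagating the interpolation exponents through the iterated integration by parts and matching constants---is routine, and in the write-up I would simply invoke the scalar Gagliardo--Nirenberg estimate in the form used in \cite{Dziuk2002} once the covariant product rule and integration-by-parts identities above are recorded.
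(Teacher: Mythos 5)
Your argument is correct and is essentially the standard one: the paper does not prove this lemma itself but quotes it from \cite{Dziuk2002}, whose proof likewise rests on the scale invariance of the norms in \eqref{def:scaleinvariantnorm} (reduce to $\Ell(f)=1$) combined with the one-dimensional Gagliardo--Nirenberg inequality transported to normal fields via the metric compatibility of $\gs$, the Kato-type bound $|\partial_s|\gs^i\kappa||\le|\gs^{i+1}\kappa|$, and boundary-free integration by parts on the closed curve. Your remarks on the admissibility of the exponent $\alpha$ and on why the full norm $\|\kappa\|_{k,2}$ (rather than the top seminorm alone) is needed are both accurate.
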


An immediate consequence of this lemma for terms of type $P^{\mu, k-1}_\nu$ is the following:
\begin{lemma}[{\cite[Proposition~2.5]{Dziuk2002}}]\label{lem:interpolationPkappa}
	Let $k\in\N$ and $f$ as in the previous lemma. For any term $P^{\mu,k-1}_\nu(\kappa)$ with $\nu \geq 2$, we have 
	\begin{align}\label{eq:interpolationP}
		\int_{\R/\Z} |P^{\mu,k-1}_\nu (\kappa)| ds \leq c \mathcal{L}(f)^{1-\mu-\nu} \|\kappa\|_{L^\infty} \|\kappa\|_2^{\nu-\gamma} \|\kappa\|^\gamma_{k,2},
	\end{align}
	where $\gamma = (\mu+ \tfrac 12 \nu -1)/k$ and $c= c(n,k,\mu,\nu, \delta)>0$. Moreover, if $\mu + \tfrac 12 \nu < 2k+1$, then $\gamma <2$ and we have for any $\eta > 0$ 
	\begin{align}\label{eq:interpolationeta}
		& \int_{\R/\Z} |P^{\mu,k-1}_\nu (\kappa)| ds \nonumber\\
		& \leq c \|\kappa\|_{L^\infty}\left( \eta \int_{\R/\Z} |\gs^k \kappa |^2 ds +  \eta^{-\frac{\gamma}{2-\gamma}}\left(\int_{\R/\Z} |\kappa|^2 ds\right)^{\frac{\nu-\gamma}{2-\gamma}} +  \mathcal{L}(f)^{1-\mu-\frac\nu 2} \left(\int_{\R/\Z} |\kappa|^2 ds \right)^{\frac \nu 2} \right)
	\end{align}
	for another constant $c= c(n,k,\mu,\nu, \delta)>0$.
\end{lemma}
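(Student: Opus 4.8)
The plan is to reduce the statement to a single monomial and then combine Hölder's inequality with the Gagliardo--Nirenberg estimate of Lemma~\ref{lem:interpolationkappa}, exactly along the lines of \cite{Dziuk2002}. By the triangle inequality it suffices to bound $\int_{\R/\Z}|T|\,ds$ for a single term $T = (\psi^{(i_0)}(|\kappa|))^j\,\gs^{i_1}\kappa\ast\cdots\ast\gs^{i_\nu}\kappa$ with $m:=i_1+\cdots+i_\nu\le\mu$, $0\le i_l\le k-1$, and $\nu\ge2$. The coefficient $(\psi^{(i_0)}(|\kappa|))^j$ is controlled pointwise by a constant depending on $\delta$ and on $\|\kappa\|_{L^\infty}$, as explained in the remark preceding the lemma; pulling it out of the integral is what produces the explicit factor $\|\kappa\|_{L^\infty}$ together with the $\delta$-dependence of $c$. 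It then remains to estimate $\int_{\R/\Z}|\gs^{i_1}\kappa\ast\cdots\ast\gs^{i_\nu}\kappa|\,ds$.

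For this product I would apply Hölder's inequality with all exponents equal to $\nu$, which is admissible precisely because $\nu\ge2$: one has $\sum_{l=1}^\nu \nu^{-1}=1$, each exponent $\nu\ge2$ meets the hypothesis $q\ge2$ of Lemma~\ref{lem:interpolationkappa}, and $i_l\le k-1<k$ meets the hypothesis $0\le i<k$. This yields $\int|\gs^{i_1}\kappa\ast\cdots\ast\gs^{i_\nu}\kappa|\,ds\le\prod_{l=1}^\nu\big(\int|\gs^{i_l}\kappa|^\nu\,ds\big)^{1/\nu}$. Rewriting each factor through the scale-invariant norm \eqref{def:scaleinvariantnorm} and applying Lemma~\ref{lem:interpolationkappa} with $\alpha_l=(i_l+\tfrac12-\tfrac1\nu)/k$, the product of the interpolated terms carries the exponent $\sum_l\alpha_l=(m+\tfrac\nu2-1)/k$; in the top-order case $m=\mu$ this is exactly $\gamma$, while the length powers assemble to $\mathcal{L}(f)^{1-\mu-\nu}$. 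Terms with $m<\mu$ are of strictly lower order and are dominated by the same expression using $\|\kappa\|_2\le\|\kappa\|_{k,2}$. Combining with the coefficient bound gives \eqref{eq:interpolationP}.

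For the refined estimate \eqref{eq:interpolationeta} I would start from \eqref{eq:interpolationP}, noting that $\mu+\tfrac\nu2<2k+1$ forces $\gamma<2$. First bound the intermediate norms by the endpoints, $\|\kappa\|_{k,2}\le c(\|\kappa\|_2+\|\gs^k\kappa\|_2)$ (a consequence of Lemma~\ref{lem:interpolationkappa} and Young's inequality), so that $\|\kappa\|_{k,2}^\gamma\le c(\|\kappa\|_2^\gamma+\|\gs^k\kappa\|_2^\gamma)$. The piece $\|\kappa\|_2^{\nu-\gamma}\|\gs^k\kappa\|_2^\gamma$ is then split by Young's inequality with conjugate exponents $\tfrac2\gamma$ and $\tfrac2{2-\gamma}$ into $\eta\|\gs^k\kappa\|_2^2$ and $\eta^{-\gamma/(2-\gamma)}\|\kappa\|_2^{2(\nu-\gamma)/(2-\gamma)}$, while the piece $\|\kappa\|_2^{\nu-\gamma}\|\kappa\|_2^\gamma=\|\kappa\|_2^\nu$ produces the last summand. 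Unpacking the scale-invariant norms into $\int|\gs^k\kappa|^2\,ds$ and $\int|\kappa|^2\,ds$, all powers of $\mathcal{L}(f)$ collapse consistently (after absorbing one fixed power of $\mathcal{L}(f)$ into $\eta$), and one arrives at \eqref{eq:interpolationeta}.

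The bulk of the work, though routine, is the exponent and length-power bookkeeping: checking that the Gagliardo--Nirenberg exponents $\alpha_l$ lie in $[0,1]$ for the chosen Hölder exponents, that $\sum_l\alpha_l$ equals $\gamma$ in the top-order case, and that the powers of $\mathcal{L}(f)$ cancel in the passage to \eqref{eq:interpolationeta}. The one genuinely delicate point is the uniform control of the coefficient $(\psi^{(i_0)}(|\kappa|))^j$: since $\psi(x)=(x^2+\delta^2)^{p/2}$ grows like $|x|^p$, its derivatives are bounded only in terms of $\delta$ and $\|\kappa\|_{L^\infty}$, and it is precisely here that one must invoke the a~priori $L^\infty$-bound on the curvature available along the flow in order to absorb this factor into the constant and the single explicit $\|\kappa\|_{L^\infty}$.
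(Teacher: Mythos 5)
Your proposal is correct and follows essentially the same route as the paper: reduce to a single monomial, absorb the coefficient $(\psi^{(i_0)}(|\kappa|))^j$ using the pointwise bound on $\kappa$, apply H\"older with all exponents equal to $\nu$, invoke Lemma~\ref{lem:interpolationkappa} so that $\sum_l\alpha_l=\gamma$, and then obtain \eqref{eq:interpolationeta} via the standard interpolation \eqref{eq:standInterpolation} and Young's inequality. Your closing remark about the $L^\infty$-control of the coefficient matches the discussion the paper places in the remark preceding the lemma.
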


\begin{proof} 
	We prove this statement along the lines of \cite[Proposition~2.5]{Dziuk2002}.
 	First note that $\kappa$ is uniformly bounded on $\R/\Z$, it is therefore sufficient to focus on $\nabla_s^{i_1} \kappa \ast \cdots \ast \nabla_s^{i_\nu} \kappa$ with $i_1 + \cdots + i_\nu = \mu$ and $i_l \leq k-1$ instead of $P_\nu^{\mu,k-1}(k)$ subsequently. We then observe by H\"older's inequality, the notion of scale invariant norms in \eqref{def:scaleinvariantnorm}, and Lemma~\ref{lem:interpolationkappa} that
 	\begin{align*}
 		\int_{\R/\Z} |\nabla_s^{i_1} \kappa \ast \cdots \ast \nabla_s^{i_\nu} \kappa| ds &\leq \prod_{l=1}^\nu \|\nabla_s^{i_l} \kappa \|_{L^\nu} 
 		= \mathcal{L}^{1-\mu-\nu}(f) \prod_{l=1}^\nu \|\nabla_s^{i_l} \kappa \|_{\nu}  \\
 		& \leq c  \mathcal{L}^{1-\mu-\nu}(f) \prod_{l=1}^\nu \|\kappa\|_2^{1-\alpha_l} \|\kappa\|^{\alpha_l}_{k,2}
	\end{align*}
	for $\alpha_l = (i_l + \tfrac 12 - \tfrac 1\nu)/k$ and some positive constant $c=c(n,k,\nu)$. As $\sum_{l=1}^\nu \alpha_l = \gamma$, the first inequality \eqref{eq:interpolationP} follows. For the second claim \eqref{eq:interpolationeta} we recall the following standard interpolation result
	\begin{align}\label{eq:standInterpolation}
		\|\kappa\|_{k,2}^2 \leq c(k) (\|\nabla_s^k\kappa \|_2^2 + \|\kappa\|_2^2),
	\end{align}
	from which we deduce together with $\gamma<2$ and the equivalence of $p$-norms on $\R^2$ 
	\begin{align*} 
		\|\kappa\|_2^{\nu-\gamma} \|\kappa\|^\gamma_{k,2} \leq c(k) (\|\kappa\|_2^{\nu-\gamma} \|\nabla_s^k\kappa \|_2^\gamma + \|\kappa\|_2^\nu).
	\end{align*}
	Now taking account of the scaling and applying Young's inequality for $\eta>0$, we observe 
	\begin{align}\label{eq:interpolSecStep}
		\mathcal{L}(f)^{1-\mu-\nu} \|\kappa\|_2^{\nu-\gamma} \|\kappa\|^\gamma_{k,2} &  \leq c \mathcal{L}(f)^{1-\mu-\nu} (\|\kappa\|_2^{\nu-\gamma} \|\nabla_s^k\kappa \|_2^\gamma + \|\kappa\|_2^\nu) \nonumber \\
		& \leq  c \|\kappa\|_{L^2}^{\nu-\gamma} \|\nabla_s^k\kappa \|_{L^2}^\gamma + c \mathcal{L}(f)^{1-\mu-\frac\nu 2} \|\kappa\|_{L^2}^\nu \nonumber \\
		& \leq \eta \|\nabla_s^k\kappa \|_{L^2}^\gamma + c \eta^{-\frac{\gamma}{2-\gamma}} \|\kappa\|_{L^2}^{2\frac{\nu-\gamma}{2-\gamma}} + c \mathcal{L}(f)^{1-\mu-\frac\nu 2} \|\kappa\|_{L^2}^\nu 
	\end{align}
	for some constant $c = c(k)>0$. 

\end{proof}

The interpolation inequality \eqref{eq:interpolationeta} can be transfered to any term $P^\mu_\nu(\gs\kappa )$ that only involves derivatives of $\gs\kappa$. The aim is to interpolate the $L^1$-norm of $P^\mu_\nu(\gs\kappa )$ between the $L^2$-norms of $\gs \kappa$ and $\gs^{k+1}\kappa$.

\begin{lemma}\label{lem:interpolationgskappa}
	Let $k\in\N$ and $f$ as in Lemma~\ref{lem:interpolationkappa}. For any term $P^{\mu,k-1}_\nu(\gs\kappa )$ with $\nu \geq 2$ such that $\mu + \tfrac 12 \nu < 2k+1$ and $\eta > 0$, we get 
	\begin{align}
	& \int_{\R/\Z} |P^{\mu,k-1}_\nu (\gs \kappa)| ds \nonumber\\
	& \leq c \|\kappa\|_{L^\infty} \Bigg(\eta \int_{\R/\Z} |\gs^{k+1} \kappa |^2 ds +  \eta^{-\frac{\gamma}{2-\gamma}}\left(\int_{\R/\Z} |\gs\kappa|^2 ds\right)^{\frac{\nu-\gamma}{2-\gamma}} 
	\\ &\quad +  \mathcal{L}(f)^{1-\mu-\frac\nu 2} \left(\int_{\R/\Z} |\gs\kappa|^2 ds \right)^{\frac \nu 2} \Bigg),
	\end{align}
	where $\gamma = (\mu+ \tfrac 12 \nu -1)/k$ and $c=c(n,k,\mu,\nu, \delta)>0$.
\end{lemma}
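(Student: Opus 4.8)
The plan is to repeat the proof of Lemma~\ref{lem:interpolationPkappa} verbatim, but with the normal field $w := \gs\kappa$ taking over the role previously played by $\kappa$. Since $\gs\kappa$ scales one order higher than $\kappa$, I first record the correspondingly shifted scale-invariant norms
\[
\|\gs^i w\|_q := \Ell(f)^{i+2-\frac1q}\Big(\int_{\RZ}|\gs^i w|^q\,ds\Big)^{\frac1q},\qquad \|w\|_{k,q}:=\sum_{i=0}^k\|\gs^i w\|_q .
\]
These coincide with the paper's scale-invariant norms shifted by one derivative, i.e. $\|\gs^i w\|_q=\|\gs^{i+1}\kappa\|_q$. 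The only external input I need is that the Gagliardo--Nirenberg estimate of Lemma~\ref{lem:interpolationkappa} applies verbatim to $w$: its proof in \cite{Dziuk2002} is a general interpolation estimate for a smooth normal field along a closed curve and uses no special property of $\kappa$, so
\[
\|\gs^i w\|_q \le c\,\|w\|_2^{1-\beta}\,\|w\|_{k,2}^{\beta},\qquad \beta=\tfrac1k\big(i+\tfrac12-\tfrac1q\big),\quad 0\le i<k .
\]
I stress that this anchors the interpolation at $\|w\|_2=\|\gs\kappa\|_2$ and at $\|\gs^k w\|_2=\|\gs^{k+1}\kappa\|_2$, exactly the two quantities appearing on the right of the claim; for this reason one \emph{cannot} obtain the statement by a naive re-indexing of Lemma~\ref{lem:interpolationkappa} applied to $\kappa$, which always anchors at $\|\kappa\|_2$.

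Next I run the chain of estimates. Exactly as in the proof of Lemma~\ref{lem:interpolationPkappa}, the scalar prefactor $(\psi^{(i_0)}(|\kappa|))^j$ is bounded in $L^\infty$ by a constant depending on $\delta$ and controlled via $\|\kappa\|_{L^\infty}$; pulling it out produces the factor $c\,\|\kappa\|_{L^\infty}$ and reduces the claim to a product $\gs^{i_1}w\ast\cdots\ast\gs^{i_\nu}w$ with $i_1+\cdots+i_\nu=\mu$ and $i_l\le k-1$. Hölder's inequality with exponent $\nu$ together with the definition above gives
\[
\int_{\RZ}|\gs^{i_1}w\ast\cdots\ast\gs^{i_\nu}w|\,ds \le \prod_{l=1}^\nu\|\gs^{i_l}w\|_{L^\nu} = \Ell(f)^{1-\mu-2\nu}\prod_{l=1}^\nu\|\gs^{i_l}w\|_{\nu},
\]
where the length exponent comes from $\sum_l(i_l+2-\tfrac1\nu)=\mu+2\nu-1$. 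Applying the interpolation estimate to each factor, with $\sum_l\beta_l=\gamma=\tfrac1k(\mu+\tfrac\nu2-1)$, then the standard interpolation $\|w\|_{k,2}^2\le c(\|\gs^k w\|_2^2+\|w\|_2^2)$, and finally converting back to genuine $L^2$-norms, the length powers telescope precisely as in \eqref{eq:interpolSecStep}: using $k\gamma=\mu+\tfrac\nu2-1$, the highest-order contribution collapses to $\Ell(f)^0$ while the lowest-order one collapses to $\Ell(f)^{1-\mu-\frac\nu2}$. This yields
\[
\int_{\RZ}|P^{\mu,k-1}_\nu(\gs\kappa)|\,ds \le c\,\|\kappa\|_{L^\infty}\Big(\|\gs\kappa\|_{L^2}^{\nu-\gamma}\|\gs^{k+1}\kappa\|_{L^2}^{\gamma} + \Ell(f)^{1-\mu-\frac\nu2}\|\gs\kappa\|_{L^2}^\nu\Big).
\]

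Finally, since the hypothesis $\mu+\tfrac\nu2<2k+1$ is equivalent to $\gamma<2$, Young's inequality with conjugate exponents $\tfrac2\gamma$ and $\tfrac2{2-\gamma}$ splits the first term as
\[
\|\gs\kappa\|_{L^2}^{\nu-\gamma}\|\gs^{k+1}\kappa\|_{L^2}^{\gamma} \le \eta\int_{\RZ}|\gs^{k+1}\kappa|^2\,ds + c\,\eta^{-\frac{\gamma}{2-\gamma}}\Big(\int_{\RZ}|\gs\kappa|^2\,ds\Big)^{\frac{\nu-\gamma}{2-\gamma}},
\]
which, together with $\|\gs\kappa\|_{L^2}^\nu=(\int_{\RZ}|\gs\kappa|^2\,ds)^{\nu/2}$, is exactly the asserted inequality. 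The only real point of care, and hence the main ``obstacle,'' is the scaling bookkeeping: one must carry the shifted weight $i+2-\tfrac1q$ for $\gs\kappa$ consistently through the Hölder step and the conversion back to $L^2$-norms so that the exponents cancel to $0$ and to $1-\mu-\tfrac\nu2$ respectively. No new analytic difficulty arises; the content is simply that the entire machinery is insensitive to replacing $\kappa$ by its first normal derivative, provided the interpolation is anchored at $\gs\kappa$ rather than at $\kappa$.
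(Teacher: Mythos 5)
Your proposal is correct and follows essentially the same route as the paper: the paper's proof simply notes that Lemma~\ref{lem:interpolationkappa} applies verbatim with $\gs\kappa$ in place of $\kappa$ (anchoring the interpolation at $\|\gs\kappa\|_2$ and $\|\gs^{k+1}\kappa\|_2$) and then reruns the argument of Lemma~\ref{lem:interpolationPkappa}. You merely make explicit the shifted scaling weights and the telescoping of the length exponents, which the paper leaves implicit; the computations check out.
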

\begin{proof}
	Lemma~\ref{lem:interpolationkappa} also applies to derivatives of $\gs\kappa$, more precisely we have
	\begin{align*}
		\| \gs^i (\gs\kappa) \|_q \leq c \|\gs \kappa\|_2^{1-\alpha} \|\gs\kappa\|^\alpha_{k,2},
	\end{align*}
	where $\alpha = (i+\tfrac 12 - \tfrac 1q)/k$ and $c=c(n,k,q)$.
	Therefore, the argument in the proof of Lemma \ref{lem:interpolationPkappa} yields the statement.
\end{proof}

The next two lemmata illustrate some relations between the full derivatives and normal derivatives of $\kappa$. 
Here $Q^\mu_\nu(\kappa)$ denotes any linear combination of terms of the type $\gs^{i_1} \kappa \ast\cdots \ast \gs^{i_\nu} \kappa $, where $i_1 + \cdots + i_\nu = \mu$.

\begin{lemma}[{\cite[Lemma~2.6]{Dziuk2002}}]\label{lem:gsidentities}
	We have the identities
	\begin{align*}
		\gs\kappa - \partial_s\kappa &= |\kappa|^2\tau, \\
		\gs^m\kappa - \fs^m\kappa &= \sum_{i=1}^{\left[\frac m2\right]} Q^{m-2i}_{2i+1}(\kappa) + \sum_{i=1}^{\left[\frac{m+1}{2}\right]} Q^{m+1-2i}_{2i}(\kappa) \tau.
	\end{align*}
\end{lemma}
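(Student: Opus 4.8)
The plan is to derive both identities from the single relation $\gs\phi = \fs\phi + \langle\phi,\kappa\rangle\tau$ valid for every normal field $\phi$ (the first formula of the toolbox Lemma~2.1). The first identity is just the case $\phi=\kappa$: since $\kappa$ is normal, $\gs\kappa = \fs\kappa + \langle\kappa,\kappa\rangle\tau = \fs\kappa + |\kappa|^2\tau$, which is exactly the claimed term $Q^0_2(\kappa)\tau$. This also serves as the base case $m=1$ of the second identity, since for $m=1$ the right-hand side there collapses to $Q^0_2(\kappa)\tau$.

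For the second identity I would argue by induction on $m$. Writing $R_m := \gs^m\kappa - \fs^m\kappa$ for the remainder, the crucial step is to express $\gs^{m+1}\kappa$ through $R_m$. Because $\gs$ preserves normality, $\gs^m\kappa$ is a normal field, so applying the base relation with $\phi=\gs^m\kappa$ gives
\[
\gs^{m+1}\kappa = \fs(\gs^m\kappa) + \langle \gs^m\kappa,\kappa\rangle\tau = \fs^{m+1}\kappa + \fs R_m + \langle \gs^m\kappa,\kappa\rangle\tau,
\]
whence $R_{m+1} = \fs R_m + \langle\gs^m\kappa,\kappa\rangle\tau$. It then remains to differentiate the explicit form of $R_m$ and identify the type of each resulting term, using the two elementary rules $\fs\tau = \kappa$ and, for a normal field, $\fs(\gs^j\kappa) = \gs^{j+1}\kappa - \langle\gs^j\kappa,\kappa\rangle\tau$ (the base relation rearranged), together with the Leibniz rule for $\ast$-products.

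The bookkeeping runs as follows. Differentiating a factor $\gs^j\kappa$ either raises it to $\gs^{j+1}\kappa$, fixing the number of curvature factors and raising the total derivative count by one, or produces $-\langle\gs^j\kappa,\kappa\rangle\tau$, which increases the number of factors by two and multiplies by $\tau$. Differentiating an explicit $\tau$ converts a scalar term $Q^\mu_{2i}(\kappa)\tau$ into the vector term $Q^\mu_{2i}(\kappa)\kappa = Q^\mu_{2i+1}(\kappa)$, while leaving a scalar term $Q^{\mu+1}_{2i}(\kappa)\tau$. The key simplification is that whenever the correction $-\langle\gs^j\kappa,\kappa\rangle\tau$ arises from differentiating a factor contracted against another normal field—which happens inside every scalar factor—it is annihilated by $\langle\tau,\gs^{j'}\kappa\rangle = 0$. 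Consequently scalar contractions of normal fields differentiate as though $\fs$ were $\gs$, and the only sources of $\tau$ are the single free vector factor and the explicit $\tau$. This is precisely what preserves the parity pattern: an odd number of factors for the $\tau$-free vector terms $Q^{m-2i}_{2i+1}(\kappa)$ and an even number for the scalar terms $Q^{m+1-2i}_{2i}(\kappa)\tau$.

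Collecting and reindexing, $\fs R_m$ yields vector terms of type $Q^{m+1-2i}_{2i+1}(\kappa)$ (from raising the free factor and from $\fs\tau=\kappa$) and scalar terms $Q^{m+2-2i}_{2i}(\kappa)\tau$ (from raising a scalar factor and from the $\tau$-correction of the free factor), while the extra summand $\langle\gs^m\kappa,\kappa\rangle\tau$ supplies exactly the $i=1$ scalar term $Q^{m}_2(\kappa)\tau$. Comparing the index ranges—using $[m/2]+1 = [(m+2)/2] \geq [(m+1)/2]$—shows that the union of all contributions matches the remainder $R_{m+1}$ claimed by the lemma, closing the induction. I expect the main obstacle to be exactly this combinatorial accounting: checking that the summation limits line up after reindexing and that no term of the wrong parity is generated. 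The orthogonality observation is what keeps the accounting clean, since it prevents scalar factors from producing spurious $\tau$-terms.
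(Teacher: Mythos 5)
Your proof is correct: the induction via $R_{m+1} = \partial_s R_m + \langle\nabla_s^m\kappa,\kappa\rangle\tau$, combined with the observation that the $\tau$-corrections arising from $\partial_s(\nabla_s^j\kappa)=\nabla_s^{j+1}\kappa-\langle\nabla_s^j\kappa,\kappa\rangle\tau$ are annihilated inside scalar contractions of normal fields, is exactly the standard argument for this identity, and your index bookkeeping (including $[m/2]+1=[(m+2)/2]$) checks out. The paper itself gives no proof---it only cites Lemma~2.6 of Dziuk, Kuwert and Sch\"atzle, where the same inductive computation is performed---so your argument supplies the omitted details along precisely the expected lines.
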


One can inductively derive the following estimates using these equalities. They allow us to derive estimates for the full derivatives from estimates of the normal derivatives.

\begin{lemma}[{\cite[Lemma~2.7]{Dziuk2002}}]\label{lem:uniformbounds}
	Assume the bounds $\|\kappa\|_{L^2} \leq \Lambda_0$ and $\|\gs^m\kappa\|_{L^1} \leq \Lambda_m$ for $m\geq 1$. Then for any $m\geq 1$ one has 
	\begin{align*}
		\|\fs^{m-1}\kappa\|_{L^\infty} + \|\fs^m\kappa\|_{L^1} \leq c_m(\Lambda_0,\ldots,\Lambda_m).
	\end{align*}
\end{lemma}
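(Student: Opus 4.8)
The plan is to convert the hypothesized control on the \emph{normal} derivatives $\gs^m\kappa$ into control on the \emph{full} derivatives $\fs^m\kappa$ by means of the identities in Lemma~\ref{lem:gsidentities}, using the one–dimensional Sobolev embedding $W^{1,1}(\RZ)\hookrightarrow L^\infty(\RZ)$ throughout. Two preliminary observations handle the length dependence hidden in that embedding. First, since $\g$ is a closed curve, Fenchel's theorem gives $\int_{\RZ}|\kappa|\,ds\geq 2\pi$, so by Cauchy--Schwarz $\Le(\g)\geq 4\pi^2/\Lambda_0^2$; hence every factor $1/\Le(\g)$ occurring below is bounded by $\Lambda_0^2/4\pi^2$ and can be absorbed into the constants. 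Second, for any normal field $\phi$ the toolbox lemma yields $\fs\phi=\gs\phi-\langle\phi,\kappa\rangle\tau$, and in particular $\fs\gs^j\kappa=\gs^{j+1}\kappa-\langle\gs^j\kappa,\kappa\rangle\tau$. I will also repeatedly use that for any $\RZ$-periodic $u$ there is a point $x_0$ with $|u(x_0)|\leq \Le(\g)^{-1}\|u\|_{L^1}$, whence $\|u\|_{L^\infty}\leq \Le(\g)^{-1}\|u\|_{L^1}+\|\fs u\|_{L^1}$.

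The first step is to establish uniform $L^\infty$-bounds on \emph{all} normal derivatives, $\|\gs^j\kappa\|_{L^\infty}\leq C_j(\Lambda_0,\dots,\Lambda_{j+1})$ for every $j\geq0$. Applying the embedding inequality to $u=\kappa$ and using $\|\fs\kappa\|_{L^1}\leq\|\gs\kappa\|_{L^1}+\||\kappa|^2\|_{L^1}\leq\Lambda_1+\Lambda_0^2$ together with $\|\kappa\|_{L^1}\leq\Le(\g)^{1/2}\Lambda_0$ and the lower bound on the length bounds $\|\kappa\|_{L^\infty}$ by a constant depending only on $\Lambda_0,\Lambda_1$. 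For $j\geq1$ I apply the same inequality to $u=\gs^j\kappa$, estimating $\|\fs\gs^j\kappa\|_{L^1}\leq\Lambda_{j+1}+\|\kappa\|_{L^\infty}\Lambda_j$ via the conversion identity above; the crucial point is that one places $\kappa$ (already controlled in $L^\infty$), and never $\gs^j\kappa$, into $L^\infty$, so no circularity arises.

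Next I bound $\|\fs^m\kappa\|_{L^1}$. By Lemma~\ref{lem:gsidentities},
$$\fs^m\kappa=\gs^m\kappa-\sum_{i=1}^{[m/2]}Q^{m-2i}_{2i+1}(\kappa)-\Big(\sum_{i=1}^{[(m+1)/2]}Q^{m+1-2i}_{2i}(\kappa)\Big)\tau,$$
where every $Q$-term is a product $\gs^{i_1}\kappa\ast\cdots\ast\gs^{i_\nu}\kappa$ with $\nu\geq2$ and each individual order $i_l\leq m-1$. I estimate each such product in $L^1$ by placing its highest-order factor in $L^1$ (controlled by the corresponding $\Lambda_{i_l}$, or by $\Lambda_0^2\|\kappa\|_{L^\infty}^{\nu-2}$ when all factors have order zero) and all remaining factors in $L^\infty$ (controlled by Step~1). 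Combined with $\|\gs^m\kappa\|_{L^1}\leq\Lambda_m$, this gives $\|\fs^m\kappa\|_{L^1}\leq c_m(\Lambda_0,\dots,\Lambda_m)$ for every $m\geq1$. Finally, applying the embedding inequality once more with $u=\fs^{m-1}\kappa$ bounds $\|\fs^{m-1}\kappa\|_{L^\infty}$ by $\Le(\g)^{-1}\|\fs^{m-1}\kappa\|_{L^1}+\|\fs^m\kappa\|_{L^1}$, both of which are controlled by the previous step (for $m=1$ the first summand is just $\|\kappa\|_{L^\infty}$ from Step~1); adding the two estimates proves the lemma.

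The underlying computations are routine interpolation/embedding; the points demanding care are structural rather than analytic. The main obstacle is that Lemma~\ref{lem:gsidentities} produces the error terms as products of \emph{normal} derivatives $\gs^{i}\kappa$, so the $L^1$-estimate of these products forces one to secure $L^\infty$-bounds on the normal derivatives \emph{first} (Step~1), before any full-derivative bound is available. Executing Step~1 without circularity---by arranging the conversion $\fs\gs^j\kappa=\gs^{j+1}\kappa-\langle\gs^j\kappa,\kappa\rangle\tau$ so that only the already-bounded $\|\kappa\|_{L^\infty}$ enters the right-hand side---is the delicate bookkeeping point, alongside the careful tracking of derivative orders $i_l\leq m-1$ in the $Q$-terms. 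Eliminating the length from the final constants is the remaining subtlety, for which the Fenchel lower bound $\Le(\g)\geq 4\pi^2/\Lambda_0^2$ is the key input; in the flow application this could equivalently be replaced by the a priori length control.
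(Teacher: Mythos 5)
Your argument is correct and is essentially the standard proof of this lemma: the paper itself gives no proof (it cites \cite[Lemma~2.7]{Dziuk2002} and merely remarks that the bounds follow inductively from the identities of Lemma~\ref{lem:gsidentities}), and your write-up carries out exactly that induction, combining those identities with the one-dimensional embedding $\|u\|_{L^\infty}\leq \Le(\g)^{-1}\|u\|_{L^1}+\|\fs u\|_{L^1}$ and the Fenchel lower bound on the length. The bookkeeping is sound — in particular, establishing the $L^\infty$ control of the normal derivatives first via $\fs\gs^j\kappa=\gs^{j+1}\kappa-\langle\gs^j\kappa,\kappa\rangle\tau$ avoids circularity, and the orders appearing in the $Q$-terms keep all constants within $\Lambda_0,\ldots,\Lambda_m$ as claimed.
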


\subsection{Short-time existence}
For the flow considered in this paper, short-time existence is a standard matter. Therefore, we only shortly sketch  the prove for smooth initial data. 

Using $\nabla_s \phi = \partial_s \phi + \langle \kappa, \phi \rangle \tau$, we obtain 
$$
 \nabla_s^4 \kappa = |\g'|^{-6} (\partial_x^6 \g - \langle \partial_x^6 \g, \tau \rangle  \tau ) +  \text{ lower order terms}.
$$
So using standard theory, we get a smooth solution $\tg: \R/\Z \times [0, T)\rightarrow \R^n$ to the quasilinear parabolic equation
$$
 \partial_t \tg = \nabla_{L^2} \ER(\tg) + |\tg'|^{-6} \langle \partial_x^6 \tg, \tilde \tau \rangle \tilde \tau
$$
with initial data $\g_0$, i.e. a smooth family of smooth curves with the right normal velocity. We set $ \sigma(t, \cdot) =  |\tg_t'|^{-7} \langle \partial_x^6 \tg_t, \tilde \tau_t \rangle$ and solve the initial value problem $\partial_t \phi = \sigma (t, \phi)$ with $\phi(0, \cdot) = id$. Then one easily calculates that $\g(t,x) = \g(t, \phi(t,x))$ solves the initial value problem \eqref{regflow}. 
%%%%%%%
\subsection{Long-time existence and asymptotic behavior}
%%%%%%%

As mentioned before, we can use the $\ast$-factor notation and its linear combinations to write the evolution equation \eqref{regflow} in the form
\begin{align}\label{simpflow}
		\partial_t f = \varepsilon(\gs^4 \kappa + P^2(\kappa)) + P^2(\kappa) + \lambda \kappa.
	\end{align}
	
Let us first deduce some obvious uniform bounds on $\mathcal{L}(f_t)$, $\|\kappa \|_{L^\infty}$, and $\|\gs\kappa\|_{L^2}$ for solutions of \eqref{regflow}.
Note that 	$\E^p_{\varepsilon,\delta,\lambda} (f_t)$ is monotonically decreasing in time $t$  as
\begin{equation} \label{eq:energyidentity}
  \frac d {dt} \ER(\g_t) = - \int_{\R/\Z} |\nabla_{L^2} \ER(\g_t)|^2 ds = - \int_{\R/\Z} | \partial_t \g_t|^2 ds,
\end{equation}
 we therefore obtain from the non-negativity of each term in the energy \eqref{energyreg}
\begin{align}\label{eq:proof2}
		 \lambda \mathcal{L}(f_t) &  \leq \E^p_{\varepsilon,\delta,\lambda}(f_t)  \leq \E^p_{\varepsilon,\delta,\lambda}(f_0), \nonumber \\
		 \int_{\R/\Z} |\gs\kappa|^2 ds & \leq \tfrac 2\varepsilon \E^p_{\varepsilon,\delta,\lambda}(f_0), \nonumber\\
		\int_{\R/\Z} |\kappa|^p ds & \leq  \int_{\R/\Z} (\kappa^2+\delta^2)^{\frac p2} ds \leq p \, \E^p_{\varepsilon,\delta,\lambda}(f_0)
	\end{align}
	for all $T>t>0$.	
	Fenchel's and H\"older's inequality imply that
	$$
		2\pi \leq \int_{\R/\Z} |\kappa| ds \leq \Ell(\g)^{1-\frac 1p}\left(\int_{\R/\Z} |\kappa|^p ds\right)^{\frac 1p}
	$$
	and hence
	\begin{align}\label{eq:proof6}
		 \Ell(\g_t) \geq  \frac {(2\pi)^{\frac p {p-1} }} { \left(\int_{\R/\Z} |\kappa|^p ds\right)^{\frac 1 {p-1}}}  \geq  \frac {(2\pi)^{\frac p {p-1} }} { (p\,  \ER(\g_0))^{\frac 1 {p-1}}} 
	\end{align}
	From the inequalities in \eqref{eq:proof2} and \eqref{eq:proof6} we thereby gain the following bounds for both the length und the $L^2$ norm of $\nabla_s \kappa$: 
	\begin{align}
		\frac {(2\pi)^{\frac p {p-1} }} { (p\, \ER(\g_0))^{\frac 1 {p-1}}}  \leq \mathcal{L}(f_t) &\leq \frac{\E^p_{\varepsilon,\delta,\lambda}(f_0)}{\lambda},\label{eq:uniformboundL} \\
		  \|\gs\kappa \|_{L^2}^2  &\leq \tfrac 2{\varepsilon} \E^p_{\varepsilon,\delta,\lambda}(f_0) .\label{eq:uniformboundgskappa}
	\end{align}
	Also the $L^\infty$ norm of $\kappa$ is bounded, which can be seen as follows. H\"older's inequality gives
\begin{align*}
		\|\kappa\|_{L^2} & \leq \mathcal{L}(f_t)^{\frac 12 - \frac 1p}\|\kappa\|_{L^p} \leq \mathcal{L}(f_t)^{\frac 12 - \frac 1p} (\int_{\R/\Z} |\kappa^2 + \delta^2|^{\frac p2} ds)^{\frac 1p} \leq ( p\lambda)^{\frac 1p} \lambda^{-\frac 12}  \E^p_{\varepsilon,\delta,\lambda}(f_0)^{\frac 12}, 
\end{align*}
and
\begin{align*}
		\|\gs\kappa\|_{L^1} & \leq \mathcal{L}(f_t)^{\frac 12} \|\gs\kappa\|_{L^2} \leq  (\tfrac{\lambda \varepsilon}{2})^{-\frac 12} \E^p_{\varepsilon,\delta,\lambda}(f_0).
	\end{align*}
Hence, Lemma \ref{lem:uniformbounds} tells us that 
	\begin{align}\label{eq:uniformboundkappa}
		\|\kappa\|_{L^\infty} < c(p,\varepsilon,\lambda, \E^p_{\varepsilon,\delta,\lambda}(f_0)).
	\end{align}

	To establish the long-time existence of a solution, let us assume that the maximal time of smooth existence for the flow \eqref{regflow}  $T$ satisfies $T < \infty$, i.e. let us assume that the flow does not exist for all times. Considering Lemma \ref{lem:eveq1}, Lemma \ref{lem:eveq2}, and the flow equation \eqref{simpflow}, we get
	\begin{equation} \label{eq:proof1}
	\begin{aligned}
		 & \frac{d}{dt} \frac 12 \int_{\R/\Z} |\gs^m\kappa|^2 ds  + \varepsilon \int_{\R/\Z} |\gs^{m+3}\kappa|^2 ds \\
		& \quad = \varepsilon \int_{\R/\Z} \langle P^{m+4}(\kappa) - \tfrac 12 \langle \gs^4 \kappa + P^2(\kappa),\kappa \rangle \gs^m \kappa , \gs^m \kappa\rangle ds  \\
		& \qquad+ \int_{\R/\Z} \langle P^{m+4}(\kappa) -\tfrac 12 \langle P^2(\kappa),\kappa \rangle \gs^m \kappa , \gs^m \kappa\rangle ds  \\
		& \qquad+ \lambda \int_{\R/\Z} \langle \gs^{m+2} \kappa + P^m(\kappa) -\tfrac 12 |\kappa|^2 \gs^m \kappa, \gs^m \kappa \rangle ds  \\
		& \quad = \epsilon \int_{\R/\Z} \langle P^{m+4}(\kappa),\gs^m \kappa\rangle ds +\int_{\R/\Z} \langle P^{m+4}(\kappa),\gs^m \kappa\rangle ds 
		\\ & \qquad 
		+ \lambda \int_{\R/\Z} \langle P^{m+2}(\kappa),\gs^m \kappa\rangle ds .
	\end{aligned}
	\end{equation}
	In order to apply interpolation estimates on the right-hand side, we need to integrate by parts once or twice, in case that terms like $\gs^{m+3}\kappa$ or $\gs^{m+4}\kappa$ appear in $P^{m+4}(\kappa)$, to achieve from \eqref{eq:proof1}
	\begin{align}\label{eq:proof3}
	& \frac{d}{dt} \frac 12 \int_{\R/\Z} |\gs^m\kappa|^2 ds  + \varepsilon \int_{\R/\Z} |\gs^{m+3}\kappa|^2 ds \nonumber \\
	& \hspace{2em}= \varepsilon \int_{\R/\Z} P^{2m+4,m+2}(\kappa) ds  +\int_{\R/\Z} P^{2m+4,m+2}(\kappa) ds + \lambda \int_{\R/\Z} P^{2m+2,m+2}(\kappa) ds .
	\end{align}
	It suffices to estimate, as $P^{2m+2,m+2}(\kappa)$ is included in the notation of $P^{2m+4,m+2}(\kappa)$,  by using Lemma \ref{lem:kappatogs} and Hölder's inequality
	\begin{align*}
		\int_{\R/\Z} &P^{2m+4,m+2}(\kappa) ds \\ & \leq \int_{\R/\Z} |P^{2m+4,m+2}(\kappa)| ds \nonumber \\
		& \leq \sum_{i=1}^{2m+4}
				\int_{\R/\Z} |P_{i}^{2m+4 - i,m+1}(\gs\kappa)| \cdot |P^0(\kappa)| ds + \int_{\R/\Z} |P^0(\kappa)| ds \nonumber \\
		& \leq \sum_{i=1}^{2m+4}
				c_0  \|\kappa\|_{L^\infty}  \int_{\R/\Z} |P_{i}^{2m+4 - i,m+1}(\gs\kappa)| ds + c_1 \|\kappa \|_{L^\infty}
	\end{align*}
	for some constants $c_0,c_1 > 0$ depending on $m$ and $\delta$. Note that the case $i=1$ can be ruled out thanks to the underlying precise gradient flow equation, including the gradients \eqref{eq:GradientF1} and \eqref{eq:GradientE1}, and integration by parts if needed. \\
	Now let es assume the initial bound $\E^p_{\varepsilon,\delta,\lambda}(f_t) \leq \Lambda$ for $0\leq t<T$. Then the previous inequality  reduces by the uniform bound on $\kappa$ in \eqref{eq:uniformboundkappa} to
	\begin{align}\label{eq:proof7}
	& \int_{\R/\Z} P^{2m+4,m+2}(\kappa) ds \nonumber \\
	& \leq \sum_{i=2}^{2m+4}	 c_0 (m,p,\varepsilon,\delta,\lambda,\Lambda)  \int_{\R/\Z} |P_{i}^{2m+4-i,m+1}(\gs\kappa)| ds + c_1 (m,p,\varepsilon,\delta,\lambda,\Lambda).
	\end{align}
	Finally, we interpolate the remaining integral between $\|\gs^{m+3} \kappa \|_{L^2}$ and $\|\gs \kappa\|_{L^2}$ by Lemma~\ref{lem:interpolationgskappa} with $k= m+2$, $\mu = 2m+4-i$, $\nu = i$, and $\gamma = \tfrac{2m+4-\frac i2 -1}{m+2}$. Note that Lemma~\ref{lem:interpolationgskappa} is applicable since $\kappa$ is uniformly bounded by \eqref{eq:uniformboundkappa}, $i=\nu\geq 2$, and $\mu + \frac 1 2 \nu = 2m+4 -i + \frac 12 i < 2m+4 +1 = 2k+1 $.
	So we have for any small $\eta > 0$ that
	\begin{equation}\label{eq:proof5intpol}
	\begin{aligned}
		& \int_{\R/\Z} |P_{i}^{2m+4-i,m-1}(\gs\kappa)| ds \\
		& \leq c_2 \Bigg( \eta \int_{\R/\Z} |\gs^{m+3}\kappa|^2 ds +  \eta^{-\frac{\gamma}{2-\gamma}} \left(\int_{\R/\Z} |\gs \kappa|^2 ds\right)^{\frac{i- \gamma}{2-\gamma}} \\ & \qquad +  \mathcal{L}(f)^{-2m-3+\frac i2}  \left(\int_{\R/\Z} |\gs \kappa|^2 ds\right)^{\frac i 2}\Bigg)
	\end{aligned}
	\end{equation}
	for a constant $c_2=c_2(i,m,n,p,\varepsilon,\delta,\lambda,\Lambda)>0$. By using $2\leq i \leq 2m+4$, the bound on $\|\gs\kappa\|_{L^2}$ in \eqref{eq:uniformboundgskappa}, and the boundedness of the length term in \eqref{eq:uniformboundL}, the interpolation estimate \eqref{eq:proof5intpol} reduces to 
	\begin{align}\label{eq:proof5intpolred}
	\int_{\R/\Z} |P_{i}^{2m+4-i,m+1}(\gs\kappa)| ds \leq \eta c_2 \int_{\R/\Z} |\gs^{m+3}\kappa|^2 ds + c (m,n,p,\varepsilon, \delta ,\lambda, \Lambda, \eta).
	\end{align}
	Hence, the estimates \eqref{eq:proof3}, \eqref{eq:proof7}, and \eqref{eq:proof5intpolred} 
	\begin{align}\label{eq:proof8}
		\frac{d}{dt} \int_{\R/\Z} |\gs^m\kappa|^2 ds  + \varepsilon \int_{\R/\Z} |\gs^{m+3}\kappa|^2 ds \leq c(m,n,p,\varepsilon,\delta,\lambda,\Lambda),
	\end{align}
	where we absorbed $\|\gs^{m+3}\kappa\|_{L^2}^2$, that appears on the right-hand side in \eqref{eq:proof5intpolred}, by choosing $\eta = \tfrac{\varepsilon}{2c_0c_2(2m+4)} > 0$ small enough. 
	So we get by estimate \eqref{eq:proof8}
	\begin{align*}
		\|\gs^m\kappa\|_{L^2}^2 (t) \leq \|\gs^m\kappa\|_{L^2}^2 (0) + c(m,n,p,\varepsilon,\delta,\lambda,\Lambda)  T
	\end{align*}
	for all $m\in\N_0$. 

From this we conclude using the upper bound on the length of the curve, $\|\gs^m \kappa\|_{L^1} \leq 
\Ell(f_t)^{\frac 1 2 } \|\gs^m\kappa\|_{L^2}^2 $, and Lemma \ref{lem:uniformbounds} that
	\begin{align*}
		\|\fs^m \kappa \|_{L^\infty} \leq c(m,n,p,\varepsilon,\delta,\lambda,\Lambda,f_0, T) 
	\end{align*}
	for all $m\in\N_0$. By an inductive argument as in the proof of {\cite[Theorem~3.1]{Dziuk2002}} also 
	\begin{align*}
	\|\fx^m \kappa \|_{L^\infty} \leq c(m,n,p,\varepsilon,\delta,\lambda,\Lambda,f_0, T) 
	\end{align*}
	holds for all $m\in\N_0$. These bounds imply that $f$ smoothly extends to $[0,T]\times \R/\Z$ and together with the short-time existence result even beyond $T$, from which we iteratively get long-time existence. 
	
	Now we draw our attention to the asymptotic behaviour of long-time solutions to the regularized gradient flow \ref{regflow}. For any such solution $f(t,\cdot)$ we can choose, after reparametrization by arc-length and an appropriate choice of translations, a subsequence $\tilde{f}(t_i,\cdot) - p_i$ that smoothly converges to a limit curve $f_\infty$ as $t_i \rightarrow \infty$. 
	
	By the standard interpolation inequality as stated in \eqref{eq:standInterpolation} 
	for $k=m+3$, the bound on the length of the curve \eqref{eq:uniformboundL}, and estimate \eqref{eq:proof8}, we infer 
	\begin{align*}
	\frac{d}{dt} \int_{\R/\Z} |\gs^m\kappa|^2 ds + c_0 \int_{\R/\Z} |\gs^m\kappa|^2 ds \leq c(m,n,p, \varepsilon, \delta,\lambda, f_0),
	\end{align*}
	where $c_0 = c_0 (p,\varepsilon,\delta,\lambda,f_0)>0$, which gives us the bound on $\|\gs^m\kappa\|_{L^2}^2 (t) \leq \|\gs^m\kappa\|_{L^2}^2 (0) + c(m,n,p,\varepsilon,\delta,\lambda,f_0)$. Together with the bound on the length \eqref{eq:uniformboundL}, Lemma~\ref{lem:uniformbounds}, and Lemma \ref{lem:gsidentities},  we get as a consequence
	\begin{align}\label{eq:boundgskappa}
		\|\partial_s^m\kappa\|_{L^\infty} + \|\gs^m\kappa\|_{L^\infty} \leq c(m,n,p,\varepsilon,\delta,\lambda,f_0)
	\end{align} 
	and hence also 
	\begin{align}\label{eq:boundgtgskappa}
		\|\gt (\gs^m\kappa)\|_{L^\infty} \leq c(m,n,p,\varepsilon,\delta,\lambda,f_0).
	\end{align}
	by Lemma \ref{lem:eveq2}.
	
	Next we define $u(t) = \|\partial_t f \|^2_{L^2} (t)$. Then we have  $u(t)\in L^1((0,\infty))$ by the energy identity \eqref{eq:energyidentity} and  $|u'(t)| \leq c(m,n,p,\varepsilon,\delta,\lambda,f_0)$ by \eqref{eq:uniformboundL}, \eqref{eq:boundgskappa}, and \eqref{eq:boundgtgskappa}. Thus we obtain that $u(t) \rightarrow 0$ as $t\rightarrow  \infty$, from which we conclude that $f_\infty$ is a critical point of $\E^p_{\varepsilon,\delta,\lambda}$. 
	
This completes the proof of Theorem \ref{thm:LongTimeExistenceRE}.

\section{A 	priori estimates for the regularized energies} \label{sec:APriori}

In this section we prove the essential a priori estimates that will allow to obtain a solution to \eqref{regflow} sending $\varepsilon, \delta \downarrow 0$ in Section \ref{sec:Convergence}. To this end, we first have to bring the evolution equations in a suitable form.

\subsection{The first variation of the energy}

We will once more calculate the first variation of the energy $\ER$. This time we will not decompose the direction into a normal and tangential part as later on we test the equation with a finite difference $\g(\cdot+h) - \g$ which need not be pointing in normal direction.  
We consider a smooth time dependent family of curves 
$
\g(t,x)=\g: [0,T) \times \RZ \rightarrow \mathbb R^n
$
and set $V = \partial_t \g.$
We obtain the following variant of \cite[Lemma 9]{Dziuk2002}.

\begin{lemma}[First variation of geometric quantities]
We have
\begin{enumerate}[label=(\roman*)]
	\item $\partial_t (ds) = \langle \tau, \partial_s V \rangle ds$ ,
	\item $\partial_t \partial_s - \partial_s \partial_t = - \langle \partial_s V, \tau \rangle \partial_s$,
	\item $\partial_t \tau = \nabla_s V$,
	\item $\nabla_t \kappa = (\partial_s^2 V)^\bot - 2 \langle \partial_s V, \tau \rangle \kappa - \langle \partial_s V, \kappa \rangle \tau  $,
	\item $\nabla_t \nabla_s \kappa = (\partial_s^3 V)^\bot - 3 \langle \partial_s^2 V, \tau\rangle \kappa   - 3 \langle \partial_s V, \tau\rangle (\partial_s \kappa)^\bot
 - \langle \nabla_s V, \kappa \rangle \kappa + |\kappa|^2 \nabla_s V$.
\end{enumerate}
\end{lemma}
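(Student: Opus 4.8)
The whole lemma rests on part (i), from which everything else cascades, so the plan is to establish (i) first and then bootstrap. I would write the arc-length element as $ds = |\g'|\,dx$ and differentiate in time, using that $\partial_t$ and $\partial_x$ commute on the smooth family $\g$. Since $\tau = \g'/|\g'|$ and $\partial_t\g' = \partial_x V$, one finds $\partial_t|\g'| = \langle \tau, \partial_x V\rangle = |\g'|\,\langle \tau, \partial_s V\rangle$, whence $\partial_t(ds) = \langle \tau, \partial_s V\rangle\,ds$. Part (ii) is then purely algebraic: writing $\partial_s = |\g'|^{-1}\partial_x$ and differentiating, the only non-commuting contribution is $-\bigl(\partial_t|\g'|/|\g'|\bigr)\partial_s$, and by (i) the scalar factor is exactly $\langle \partial_s V, \tau\rangle$. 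These two identities are the engine of the proof: every higher-order formula is obtained by commuting $\partial_t$ past one more $\partial_s$ at the cost of a single $-\langle \partial_s V,\tau\rangle\,\partial_s$ term, then differentiating the correction terms.

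For (iii) I would apply the commutator (ii) to $\phi=\g$: since $\partial_s\g=\tau$ and $\partial_t\g=V$, this yields $\partial_t\tau = \partial_s V - \langle \partial_s V,\tau\rangle\tau$, which is precisely the normal projection $\nabla_s V$. Part (iv) then follows by applying (ii) to $\phi=\tau$ and inserting (iii): $\partial_t\kappa = \partial_t\partial_s\tau = \partial_s(\nabla_s V) - \langle \partial_s V,\tau\rangle\kappa$. Expanding $\partial_s(\nabla_s V)$ by the product rule and repeatedly using $\langle \kappa,\tau\rangle=0$ to simplify the scalar coefficients, the normal and tangential pieces organise into $(\partial_s^2 V)^\perp - 2\langle \partial_s V,\tau\rangle\kappa - \langle \partial_s V,\kappa\rangle\tau$. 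The one subtlety is bookkeeping: differentiating the coefficient $\langle \partial_s V,\tau\rangle$ produces both a $\langle \partial_s^2 V,\tau\rangle$ and a $\langle \partial_s V,\kappa\rangle$ contribution, and these must be sorted correctly between the $\tau$- and $\kappa$-directions.

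Part (v) is the main obstacle and the most computation-heavy. The plan is to start from $\nabla_s\kappa = \partial_s\kappa + |\kappa|^2\tau$ (the normal part of $\partial_s\kappa$, using $\langle \partial_s\kappa,\tau\rangle = -|\kappa|^2$) and differentiate in $t$, treating the three summands separately: for $\partial_t\partial_s\kappa$ I commute via (ii) and substitute (iv); for $\partial_t(|\kappa|^2)$ I use $2\langle \partial_t\kappa,\kappa\rangle$ together with (iv); and for $\partial_t(|\kappa|^2\tau)$ I use (iii) for $\partial_t\tau$. The difficulty lies entirely in reassembling the output into the stated form: numerous tangential multiples of $\tau$ must be recombined with $\partial_s\kappa$ (via $\partial_s\kappa = (\partial_s\kappa)^\perp - |\kappa|^2\tau$) and with the second arc-length derivative $\partial_s^2\langle \partial_s V,\tau\rangle = \langle \partial_s^3 V,\tau\rangle + 2\langle \partial_s^2 V,\kappa\rangle + \langle \partial_s V,\partial_s\kappa\rangle$, which is where the coefficients $3\langle \partial_s^2 V,\tau\rangle$ and $3\langle \partial_s V,\tau\rangle$ in the target expression originate.

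I expect the cancellation of the spurious $\tau$-components, which relies on the Frenet-type relations $\langle \kappa,\tau\rangle=0$ and $\langle \partial_s\kappa,\tau\rangle=-|\kappa|^2$, to be the step most prone to sign and coefficient slips. To control this I would organise the calculation by first collecting all contributions along the four building blocks $\tau$, $\kappa$, $(\partial_s\kappa)^\perp$, and $(\partial_s^3 V)^\perp$ (plus the $|\kappa|^2\nabla_s V$ term) before simplifying, rather than simplifying term by term; this makes the matching with the claimed identity transparent and isolates any residual tangential piece for a final check against $\partial_t\langle \nabla_s\kappa,\tau\rangle = 0$.
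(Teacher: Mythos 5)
Your proposal follows essentially the same route as the paper's proof: (i) and (ii) from differentiating $|\g'|$ and $\partial_s=|\g'|^{-1}\partial_x$, (iii) as the statement $\partial_t\tau=\nabla_s V$ obtained from $\partial_t\g'=\partial_x V$, (iv) by commuting $\partial_t$ past $\partial_s$ applied to $\tau$ and expanding $\partial_s(\nabla_s V)$, and (v) by splitting $\nabla_s\kappa=\partial_s\kappa+|\kappa|^2\tau$ (equivalently $\partial_s\kappa-\langle\partial_s\kappa,\tau\rangle\tau$) and differentiating the pieces separately, with the tangential bookkeeping handled exactly as you describe. The only cosmetic difference is that the paper gets (iii) by differentiating $\tau=\g'/|\g'|$ directly rather than via the commutator applied to $\g$; your plan is sound and matches the paper's argument.
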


\begin{proof}
Firstly, we observe that 
$$
 \partial_t |\g'| = \left \langle \frac {\g'}{|\g'|}, \partial_x V \right\rangle = \langle \tau, \partial_s V \rangle  |\g'|.
$$
Hence, from $ds = |\g'|dx$ we get
$$
 \partial_t (ds) = (\partial_t |\g'|) dx = \langle \tau, \partial_s V \rangle ds
$$
and
$$ 
\partial_t \tau = \frac {\partial_t \g'}{|\g'|} - \frac {\g'}{|\g'|} \langle \tau, \partial_s V \rangle = \partial_s V - \langle \tau, \partial_s V\rangle \tau = \nabla_s V.
$$
This proves parts (i) and (iii). Similarly,  differentiating $\partial_s = \frac {\partial_x}{|\g'|}$ yields
$$
 \partial_t \partial_s = \frac 1 {|\g']} \partial_t \partial_x - \langle \tau, \partial_s V\rangle \frac 1 {|\g'|}  \partial_x = \partial_s \partial_t - \langle \tau, \partial_s V\rangle  \partial_s.
$$
To get (iv), we calculate using the formulas (ii) and (iii)
\begin{align*}
 \partial_t \kappa 
 & = \partial_t \partial_s \tau = \partial_s \partial_t \tau  - \langle \tau ,\partial_s V \rangle \partial_s \tau 
 = \partial_s (\partial_s V - \langle \partial_s V, \tau \rangle \tau) - \langle \tau ,\partial_s V \rangle \kappa \\
 & = (\partial_s^2 V )^\bot - \langle \partial_s V, \kappa \rangle \tau - \langle \partial_s V, \tau \rangle \kappa  - \langle \tau ,\partial_s V \rangle \kappa\\
 & = (\partial_s^2 V)^\bot - 2 \langle \partial_s V, \tau \rangle \kappa - \langle \partial_s V ,\kappa \rangle \tau.
\end{align*}
Finally from 
\begin{align*}
 \nabla _t \partial_s \kappa &= (\partial_s \partial_t \kappa  - \langle  \partial_s V , \tau \rangle \partial_s \kappa)^\bot \\
 & = (\partial_s  (\partial_s^2 V - \langle \partial_s^2 V, \tau \rangle \tau - 2 \langle \partial_s V, \tau \rangle \kappa + \langle \partial_s V ,\kappa \rangle \tau)  - \langle \partial_s V, \tau  \rangle \partial_s \kappa)^\bot \\
& = ((\partial_s^3 V)^\bot - \langle \partial_s^2 V, \tau\rangle \kappa - 2 \langle \partial_s^2 V, \tau\rangle \kappa  - 2 \langle \partial_s V, \kappa \rangle \kappa - 2 \langle \partial_s V, \tau\rangle \partial_s \kappa
 \\ & \quad + \langle \partial_s V, \kappa \rangle \kappa - \langle\partial_s V, \tau \rangle \partial_s \kappa)^\bot 
 \\ &= (\partial_s^3 V)^\bot - 3 \langle \partial_s^2 V, \tau\rangle \kappa   - 3 \langle \partial_s V, \tau\rangle \nabla_s \kappa
 - \langle \partial_s V, \kappa \rangle \kappa
\end{align*}
together with
\begin{align*}
  \nabla_t ( \langle \partial_s \kappa, \tau \rangle \tau ) = \langle \partial_s \kappa, \tau \rangle  \nabla_t  \tau = \langle \partial_s \kappa, \tau \rangle \nabla_s V
 =-|\kappa|^2 \nabla_s V
\end{align*}
we get (v).
\end{proof}

Using this lemma with $\g_t = \g + t V $, we obtain for the first variation of $\F$
\begin{equation} \label{eq:FirstVariationF}
\begin{aligned}
 \delta_V \F(\g) &:= \partial_t \F (\g+ t V) |_{t=0}  \\ & = \int_{\RZ} \langle \nabla _s \kappa, \delta_V( \nabla _s \kappa) \rangle ds + \frac 12 \int_{\RZ} |\nabla_s \kappa|^2  \delta_V (ds) 
 \\ &= \int_{\RZ} \langle \nabla _s \kappa,  (\delta_V( \nabla _s \kappa)) ^\bot \rangle ds + \frac 12 \int_{\RZ} |\nabla_s \kappa|^2  \langle \tau, \partial_s V \rangle ds,
\end{aligned}
\end{equation}
where
$$
(\delta_V (\nabla_s \kappa))^\bot = (\partial_s^3 V)^\bot - 3 \langle \partial_s^2 V, \tau\rangle \kappa   - 3 \langle \partial_s V, \tau\rangle \nabla_s \kappa
 - \langle \nabla_s V, \kappa \rangle \kappa + |\kappa|^2 \nabla_s V. 
$$
Similarly,
\begin{equation} \label{eq:FirstVariationEpd}
\delta_V \E^p_\delta ( \g ) = \int_{\RZ} (|\kappa|^2 + \delta^2 )^{p-2} \langle \kappa, \delta_V \kappa \rangle + \frac 1 p \int_{\RZ} (|\kappa|^2 + \delta^2)^{\frac p 2} \langle \tau, \partial_s V \rangle ds 
\end{equation}
and
$$
\delta_V \kappa =  (\partial_s^2 V)^\bot - 2 \langle \partial_s V, \tau \rangle \kappa - \langle \partial_s V, \kappa \rangle \tau .
$$
The first variation of length of course is
\begin{equation} \label{eq:FirstVariationLength}
 \delta_V \Ell (\g) = \int_{\RZ} \langle \tau, \partial_s V \rangle  ds.
 \end{equation}
Let us bring the first variation of $\nabla_s \kappa$ in a more suitable form for our task, using again a slight variant of the $\ast$-notation introduced in \cite{Dziuk2002}.  
Together with $\nabla_s \kappa = \partial_s \kappa - \langle \partial_s \kappa, \tau \rangle \tau = \partial_s \kappa + |\kappa|^2 \tau $, we get 
$$
(\delta_V (\nabla_s \kappa))^\bot = (\partial_s^3 V)^\bot + \partial_s^2 V \ast \tau \ast \kappa   + \partial_s V \ast (\tau  \ast \partial_s \kappa + \kappa \ast \kappa + \kappa \ast \kappa \ast \tau). 
$$

\subsection{Higher regularity}

We will now prove that solutions of the negative gradient flow for the regularized energies $\ER$ are more regular, using a finite difference method as in \cite{Brasco2017}. We measure this additional regularity using the Besov-Nikolski space $B^{s}_{p, \infty}(\RZ)$ for $s \in (0,1)$ and $p\in [1, \infty)$, which consists of all functions $u \in L^p(\RZ)$ for which the semi-norm
$$
 |u|_{B^{s}_{p,\infty}} := \sup_h \frac {\|u_h - u\|_{L^p}}{|h|^{s}}
$$
is finite. Here, $u_h(x):=u(x+h)$ denotes the by the value $h$ shifted function. This space is equipped with the norm 
$$
 \|u\|_{B^{s}_{p,\infty}} = \|u\|_{L^p} + |u|_{B^{s}_{p,\infty}}.
$$

\begin{theorem}[Higher regularity] \label{thm:HigherRegularity}
Let $\g:\R/\Z\rightarrow \R^n$ be a smooth curve parametrized by arc-length which satisfies
the equation
\begin{equation} \label{eq:QuasiSolution}
 \nabla_{L^2} \ER(\g) = g
\end{equation}
for a $g \in L^2(ds)$ and an $0< \varepsilon < 1$. Then
$$
 \varepsilon \|\partial_s^3\g\|_{B^{\frac 14}_{2, \infty}} ^2 + \|\partial_s^2 \g\|_{B^{\frac 1{2p} } _{p,\infty}}^p \leq C (1+ \|g\|_{L^2})
$$
for some constant $C= C(p,\lambda,\ER(\g))$.
\end{theorem}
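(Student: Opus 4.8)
The plan is to run the finite-difference (Nirenberg translation) scheme of \cite{Brasco2017} directly on the Euler--Lagrange equation \eqref{eq:QuasiSolution}. Write $\Delta_h u(\cdot)=u(\cdot+h)-u(\cdot)$ for a small parameter $h$. Since $\ER$ is reparametrization invariant, $\delta_V\ER(\g)$ depends only on the normal part of $V$, and because $g=\nabla_{L^2}\ER(\g)$ is normal, \eqref{eq:QuasiSolution} is equivalent to the statement $\delta_V\ER(\g)=\int_{\RZ}\langle g,V\rangle\,ds$ for \emph{every} smooth field $V$ along $\g$ (not only normal ones); this is precisely why the first variation was computed in the undecomposed form \eqref{eq:FirstVariationF}--\eqref{eq:FirstVariationLength}. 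I would insert the symmetric second difference $V=\g(\cdot+h)-2\g+\g(\cdot-h)$ and repeatedly apply the summation-by-parts identity $\int_{\RZ}\langle\Phi,\Delta_{-h}\Delta_h w\rangle\,ds=\int_{\RZ}\langle\Delta_h\Phi,\Delta_h w\rangle\,ds$ to move one difference onto the coefficient. As $\g$ has unit speed, shifts in the parameter $x$ and in arc-length differ only by the constant $\Ell(\g)$, which by \eqref{eq:uniformboundL} is bounded above and below in terms of $p,\lambda,\ER(\g)$, so it is harmless to difference in $x$; and since $\g$ is smooth every quantity below is a priori finite, the content being the uniformity of the bound in $\varepsilon,\delta$.

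The mechanism is that the two top-order contributions are coercive. Using $\delta_V\kappa=(\partial_s^2V)^\bot+\cdots$, the $p$-growth term \eqref{eq:FirstVariationEpd} produces, up to curvature corrections, the pairing $\int_{\RZ}\langle\Delta_h H(\partial_s^2\g),\Delta_h\partial_s^2\g\rangle\,ds$ with $H(\xi)=(|\xi|^2+\delta^2)^{\frac{p-2}{2}}\xi$ (the integrand of \eqref{eq:GradientE1}); the elementary monotonicity inequality $\langle H(\xi)-H(\eta),\xi-\eta\rangle\ge c_p|\xi-\eta|^p$ valid for $p\ge 2$ then bounds this from below by $c_p\int_{\RZ}|\Delta_h\partial_s^2\g|^p\,ds$. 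The quadratic term $\varepsilon\F$ in \eqref{eq:FirstVariationF} produces, up to lower order, $\varepsilon\int_{\RZ}|\Delta_h\gs\kappa|^2\,ds$, which controls $\Delta_h\partial_s^3\g$ since $\partial_s^3\g=\gs\kappa-|\kappa|^2\tau$. On the right-hand side, unit speed gives $\|\Delta_{-h}\Delta_h\g\|_{L^2}\le C|h|$, whence $\int_{\RZ}\langle g,\Delta_{-h}\Delta_h\g\rangle\,ds\le C|h|\,\|g\|_{L^2}$. Thus, modulo error terms $\mathcal{E}_h$,
$$\varepsilon\int_{\RZ}|\Delta_h\gs\kappa|^2\,ds+c_p\int_{\RZ}|\Delta_h\partial_s^2\g|^p\,ds\le C|h|\,\|g\|_{L^2}+\mathcal{E}_h.$$

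The hard part, and the real obstacle, is to show that $\mathcal{E}_h\le C(1+\|g\|_{L^2})\,|h|^{1/2}$ with $C=C(p,\lambda,\ER(\g))$ \emph{independent of} $\varepsilon$ and $\delta$. The errors are of two kinds: the curvature corrections relating $\partial_s^2\g,\partial_s^3\g$ to $\kappa,\gs\kappa$ together with the factors $\langle\tau,\partial_s V\rangle$ appearing in \eqref{eq:FirstVariationF}--\eqref{eq:FirstVariationEpd}, and the genuinely lower-order $\ast$-products. The difficulty is that the only $\varepsilon$-uniform information available is $\|\kappa\|_{L^p}$, $\|\kappa\|_{L^2}$ and the two-sided length bound \eqref{eq:uniformboundL}: in particular there is \emph{no} $\varepsilon$-uniform $L^\infty$-control on $\kappa$, since \eqref{eq:uniformboundkappa} degenerates as $\varepsilon\downarrow 0$, so the corrections cannot be pulled out in sup-norm. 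I would estimate them by Hölder's inequality combined with the interpolation $\|\Delta_h w\|_{L^2}\le C|h|^{1/2}\|w\|_{L^2}^{1/2}\|\partial_s w\|_{L^2}^{1/2}$ and the scale-invariant Gagliardo--Nirenberg estimates of Lemma~\ref{lem:interpolationkappa}, while every term carrying a top-order difference (an $\varepsilon\,\Delta_h\gs\kappa$-type factor) is absorbed into the good term $\varepsilon\int_{\RZ}|\Delta_h\gs\kappa|^2\,ds$ by Young's inequality. It is exactly this forced use of the half-power bound $|h|^{1/2}$, in the absence of an $\varepsilon$-uniform derivative bound, that degrades the naive exponents $B^{1/2}_{2,\infty}$ and $B^{1/p}_{p,\infty}$ to the asserted $B^{\frac14}_{2,\infty}$ and $B^{\frac{1}{2p}}_{p,\infty}$.

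To conclude, for $|h|\le 1$ one has $|h|\le|h|^{1/2}$, so the displayed inequality and the error estimate give $\varepsilon\int_{\RZ}|\Delta_h\partial_s^3\g|^2\,ds+c_p\int_{\RZ}|\Delta_h\partial_s^2\g|^p\,ds\le C(1+\|g\|_{L^2})\,|h|^{1/2}$, after converting $\Delta_h\gs\kappa$ back to $\Delta_h\partial_s^3\g$ via $\partial_s^3\g=\gs\kappa-|\kappa|^2\tau$ at the cost of another error of the same type. Dividing by $|h|^{1/2}$ and taking the supremum over $h$ yields
$$\varepsilon\,|\partial_s^3\g|_{B^{\frac14}_{2,\infty}}^2+|\partial_s^2\g|_{B^{\frac{1}{2p}}_{p,\infty}}^p\le C(1+\|g\|_{L^2}).$$
Finally I add the $L^p$- and weighted $L^2$-parts of the norms, which are controlled $\varepsilon$-uniformly by $\|\partial_s^2\g\|_{L^p}=\|\kappa\|_{L^p}\le (p\,\ER(\g))^{1/p}$ and, using \eqref{eq:uniformboundgskappa} together with $\partial_s^3\g=\gs\kappa-|\kappa|^2\tau$ and interpolation, by $\varepsilon\|\partial_s^3\g\|_{L^2}^2\le C$, to complete the full Besov norms and obtain the claim.
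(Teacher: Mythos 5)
Your proposal follows essentially the same route as the paper: test the Euler--Lagrange equation with finite differences of the curve, extract the coercive contributions $\varepsilon\int_{\RZ}|\Delta_h\partial_s^3\g|^2\,ds$ from $\varepsilon\F$ and $c_p\int_{\RZ}|\Delta_h\kappa|^p\,ds$ from the monotonicity of $\xi\mapsto(|\xi|^2+\delta^2)^{\frac{p-2}{2}}\xi$, and bound all remainders by $C|h|^{1/2}$ via the $\tfrac12$-H\"older control of $\tau$ and $\kappa$; your symmetric second difference is, by translation invariance of the energy, exactly the identity the paper obtains by subtracting the equations at $\g$ and $\g_h$ tested with $V=\g_h-\g$, so the argument matches Lemmata \ref{lem:APF}--\ref{lem:MonotonicityEP}. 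The one inaccuracy is your claim that the only $\varepsilon$-uniform information is $\|\kappa\|_{L^p}$, $\|\kappa\|_{L^2}$ and the length bounds: the bound $\varepsilon\|\nabla_s\kappa\|_{L^2}^2\le 2\ER(\g)$ (hence $\varepsilon\|\partial_s\kappa\|_{L^2}^2\le C$) is also available, and it is this, rather than absorption into the good term alone, that the paper uses to close the $\F$-error estimates such as the term $\varepsilon\int_{\RZ}\langle\partial_s^3\g,\partial_s^2V\ast\kappa\ast\tau\rangle\,ds\le C|h|^{1/2}\varepsilon\|\partial_s\kappa\|_{L^2}^2$.
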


We note that Fenchel's together with H\"older's inequality give
$$
 2 \pi \leq \int_{\RZ} |\kappa|ds \leq \Ell(\g)^{1- \frac 1p} \Big(\int_{\RZ} |\kappa|^p ds\Big)^{\frac 1p} \leq  \Ell(\g)^{1- \frac 1p}(p\ER(\g) )^{\frac 1p},
$$
which implies
$$
 \Ell(\g)  \geq \frac {(2 \pi)^{\frac {p} {p-1} }} {(p\ER(\g))^{\frac 1 {p-1}}}.
$$
Since trivially 
$$
 \Ell(\g) \leq \frac {\ER(\g)}{\lambda},
$$
we see that the length of the curve is bounded from above and from below away from zero if the regularized energy is controlled.

We note that using H\"older's inequality we get
\begin{equation} \label{eq:BoundEnergy}
 \frac \varepsilon 2 \int_{\RZ} |\nabla_s \kappa|^2 ds + \frac 1p \Ell(\g)^{\frac  p 2 -1 } (\int_{\RZ} |\kappa|^2 ds)^{\frac p2} \leq \varepsilon\F(\g) + \EPD (\g)  \leq \ER(\g)
\end{equation}
as $p \geq 2.$ 
From $\nabla_{s} \kappa = \partial_{s} \kappa + |\kappa|^2 \tau$ together with Gagliardo-Nirenberg's interpolation inequality (cf. Lemma~\ref{lem:interpolationkappa}) and Young's inequality, we get
\begin{align*}
	 \|\partial_{s} \kappa\|_{L^2} &\leq  C (\|\nabla_{s} \kappa\|_{L^{2}} + \|\kappa\|^{2}_{L^4})
	\leq C (  \|\nabla_{s} \kappa\|_{L^{2}} +  \|\nabla_{s} \kappa\|^{\frac 1 2}_{L^{2}}
	\| \kappa \|^{\frac 32}_{L^2} )
	\\ &\leq C ( \|\nabla_{s} \kappa\|_{L^{2}} + \|\kappa\|_{L^2}^3) \leq C ( \|\nabla_{s} \kappa\|_{L^{2}} + 1).
\end{align*}
Hence, \eqref{eq:BoundEnergy} implies
$
  \varepsilon \|\partial_{s} \kappa\|^2_{L^2} \leq C,
$
where the constant $C>0$ only depends on the energy $\ER(\g).$ 
From Equation \eqref{eq:QuasiSolution} we get  
$$
 \nabla_{L^2} \ER(\g_{h}) = g_{h} \quad \text{and} \quad \nabla_{L^{2}} \ER(\g) =g ,
$$
where for a function $u$ we set $u_{h}(x):= u(x+h)$. We test these equations with $V=\g_h - \g$, subtract the results and integrate over $\RZ$ to get
\begin{equation} \label{eq:WF}
\begin{aligned}
  \varepsilon \delta_{V} (\F(\g_{h}) - \F(\g))  &+ \delta_{V} (\EPD (\g_{h}) - \EPD (\g)) + \lambda (\delta_{V} (\Ell(\g_{h}) - \Ell(\g))  \\ & =\int_{\RZ} (g_{h}-g) (\g_{h} -\g) ds.
\end{aligned}
\end{equation}
We will estimate this further using that
\begin{equation} \label{eq:Hoelder}
 \hol_{\frac 1 2} \tau \leq C \|\kappa\|_{L^2}  \qquad  \hol_{\frac 1 2} \kappa \leq C \|\partial_{s}\kappa\|_{L^2}
\end{equation}
which can easily seen to be true observing that the fundamental theorem of calculus together with H\"older's inequality gives for a differentiable function $u: \RZ \rightarrow \mathbb R^n$ and $x,h \in \mathbb R$ with $|h|< 1$,
$$
 |u(x+h)- u(x)| = \Big|\int_0^h u'(x+t) dt \Big| \leq \int_0^h |u'(x+t)| dt \leq |h|^{\frac 1 2} \|u'\|_{L^2}.
$$
Hence, we also have 
\begin{equation} \label{eq:EstimateV}
 \|\partial_s V\|_{L^\infty} \leq C |h|^{\frac 1 2}\|\kappa\|_{L^2}  \qquad  \|\partial_s^2 V \|_{L^\infty} \leq C |h|^{\frac 1 2}\|\partial_{s}\kappa\|_{L^2}.
\end{equation}
We start discussing the terms coming from the first variation of $\F$ in \eqref{eq:WF}.

\begin{lemma} \label{lem:APF}We have 
$$
 \varepsilon \delta_V (\F (\g_h) - \F (\g)) = \varepsilon \int_{\RZ} |\partial_s^3 \g_h - \partial_s^3 \g|^2 ds + R_{\F}
$$
with $ |R_{\F}| \leq C |h|^{\frac 12}.$
\end{lemma}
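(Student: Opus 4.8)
The plan is to substitute $V = \g_h - \g$ into the first variation formula \eqref{eq:FirstVariationF}, evaluated once at $\g_h$ and once at $\g$, and to subtract the two. The crucial simplifying observation is that, since $\g$ is parametrized with constant speed $L = \Ell(\g)$, so is its shift $\g_h$, and both have the \emph{same} speed; hence the arc-length operator $\partial_s = L^{-1}\partial_x$ and the measure $ds = L\,dx$ coincide for the two curves. Consequently every geometric quantity attached to $\g_h$ is merely the shift of the corresponding quantity of $\g$ (so $\kappa_{\g_h} = \kappa_h$, $\tau_{\g_h} = \tau_h$, $(\nabla_s\kappa)_{\g_h} = (\nabla_s\kappa)_h$), and, most importantly, $\partial_s^3\g_h - \partial_s^3\g = \partial_s^3 V$. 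This already identifies $\partial_s^3 V$ as the object whose $L^2$-norm must appear as the main term.

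Next I would isolate the leading-order contribution. Differentiating $|\partial_s\g|^2\equiv 1$ twice gives $\langle\partial_s^3\g,\tau\rangle = -|\kappa|^2$, hence $\nabla_s\kappa = (\partial_s^3\g)^\bot = \partial_s^3\g + |\kappa|^2\tau$. Since $\nabla_s\kappa$ is normal, the pairing $\langle\nabla_s\kappa,(\partial_s^3 V)^\bot\rangle$ in \eqref{eq:FirstVariationF} equals $\langle\nabla_s\kappa,\partial_s^3 V\rangle$, and likewise at $\g_h$. Thus the top-order part of $\varepsilon\bigl(\delta_V\F(\g_h)-\delta_V\F(\g)\bigr)$ is $\varepsilon\int_{\RZ}\langle(\nabla_s\kappa)_h - \nabla_s\kappa,\partial_s^3 V\rangle\,ds$. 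Writing $(\nabla_s\kappa)_h - \nabla_s\kappa = \partial_s^3 V + S$ with $S := |\kappa_h|^2\tau_h - |\kappa|^2\tau$, this splits into the desired main term $\varepsilon\int_{\RZ}|\partial_s^3 V|^2\,ds = \varepsilon\int_{\RZ}|\partial_s^3\g_h-\partial_s^3\g|^2\,ds$ plus a remainder $\varepsilon\int_{\RZ}\langle S,\partial_s^3 V\rangle\,ds$.

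The main obstacle is precisely this remainder, because we do \emph{not} yet control $\|\partial_s^3 V\|_{L^2}$ (bounding it is the very goal of the theorem), so a crude Cauchy--Schwarz estimate is forbidden. The resolution is to integrate by parts, moving one derivative onto the smooth factor $S$:
\[
 \varepsilon\int_{\RZ}\langle S,\partial_s^3 V\rangle\,ds = -\varepsilon\int_{\RZ}\langle\partial_s S,\partial_s^2 V\rangle\,ds.
\]
Now $\partial_s S$ is a sum of terms of the form $\langle\kappa,\partial_s\kappa\rangle\tau$ and $|\kappa|^2\kappa$ and their shifts, so $\|\partial_s S\|_{L^1}\le C\bigl(\|\kappa\|_{L^2}\|\partial_s\kappa\|_{L^2} + \|\kappa\|_{L^3}^3\bigr)$, which by \eqref{eq:BoundEnergy}, the bound $\varepsilon\|\partial_s\kappa\|_{L^2}^2\le C$ derived above, and interpolation is $\le C\varepsilon^{-1/2}$. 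Pairing with $\|\partial_s^2 V\|_{L^\infty}\le C|h|^{1/2}\|\partial_s\kappa\|_{L^2}\le C|h|^{1/2}\varepsilon^{-1/2}$ from \eqref{eq:EstimateV}, the remainder is $\le C\varepsilon^{-1}|h|^{1/2}$, and the prefactor $\varepsilon$ cancels the $\varepsilon^{-1}$, giving a contribution $\le C|h|^{1/2}$ with $C = C(p,\lambda,\ER(\g))$.

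Finally, the lower-order terms of $(\delta_V\nabla_s\kappa)^\bot$, namely $\partial_s^2 V\ast\tau\ast\kappa$ and $\partial_s V\ast(\tau\ast\partial_s\kappa+\kappa\ast\kappa+\kappa\ast\kappa\ast\tau)$ paired against $\nabla_s\kappa$, together with the term $\tfrac12\int|\nabla_s\kappa|^2\langle\tau,\partial_s V\rangle\,ds$ and all their $\g_h$-versions, are collected into $R_{\F}$. Each is estimated by placing the curvature factors in $L^2$ (their norms being shift-invariant, so the $\g_h$- and $\g$-versions obey the same bounds) and $\partial_s V,\partial_s^2 V$ in $L^\infty$ via \eqref{eq:EstimateV}. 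Using $\varepsilon\|\nabla_s\kappa\|_{L^2}^2\le 2\ER(\g)$ and $\varepsilon\|\partial_s\kappa\|_{L^2}^2\le C$, every such term is $\le C\varepsilon^{-1}|h|^{1/2}$, whence $\varepsilon$ times it is $\le C|h|^{1/2}$. Summing all contributions yields $|R_{\F}|\le C|h|^{1/2}$, which completes the proof.
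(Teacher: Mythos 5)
Your proof is correct and follows the same overall strategy as the paper: decompose $\varepsilon\bigl(\delta_V\F(\g_h)-\delta_V\F(\g)\bigr)$ by writing $\nabla_s\kappa=\partial_s^3\g+|\kappa|^2\tau$, extract the main term $\varepsilon\int_{\RZ}|\partial_s^3\g_h-\partial_s^3\g|^2\,ds$, and estimate the cross term involving $S=|\kappa_h|^2\tau_h-|\kappa|^2\tau$ together with all lower-order terms via \eqref{eq:Hoelder}, \eqref{eq:EstimateV}, the shift-invariance of the norms, and the bounds $\varepsilon\|\nabla_s\kappa\|_{L^2}^2\le 2\ER(\g)$ and $\varepsilon\|\partial_s\kappa\|_{L^2}^2\le C$. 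The one place you genuinely deviate is the cross term $\varepsilon\int_{\RZ}\langle S,\partial_s^3V\rangle\,ds$: you declare a direct Cauchy--Schwarz estimate ``forbidden'' because $\|\partial_s^3V\|_{L^2}$ is allegedly not yet controlled, and you integrate by parts instead. That premise is mistaken: since $\partial_s^3V=\partial_s\kappa_h-\partial_s\kappa$, one has $\|\partial_s^3V\|_{L^2}\le 2\|\partial_s\kappa\|_{L^2}\le C\varepsilon^{-1/2}$ from the bound $\varepsilon\|\partial_s\kappa\|_{L^2}^2\le C$ established just after \eqref{eq:BoundEnergy} (the theorem only upgrades this crude bound by the factor $|h|^{1/2}$, it does not create it). The paper therefore simply pairs $\|\partial_s^3V\|_{L^2}$ with $\||\kappa_h|^2\tau_h-|\kappa|^2\tau\|_{L^2}\le C|h|^{1/2}(\|\partial_s\kappa\|_{L^2}+1)$, and the prefactor $\varepsilon$ absorbs the two half-powers of $\varepsilon^{-1}$, exactly as in your bookkeeping. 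Your integration-by-parts variant, with $\|\partial_s S\|_{L^1}\le C\varepsilon^{-1/2}$ against $\|\partial_s^2V\|_{L^\infty}\le C|h|^{1/2}\varepsilon^{-1/2}$, is also valid and yields the same $C|h|^{1/2}$, so nothing is lost --- it is just an unnecessary detour, and you should correct the stated justification for it. The remaining estimates (terms II, III and the $\tfrac12\int_{\RZ}|\nabla_s\kappa|^2\langle\tau,\partial_sV\rangle\,ds$ term) match the paper's argument.
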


\begin{proof}
To prove this claim, we decompose 
\begin{align*}
& \varepsilon\Big( \int_{\RZ} \langle \nabla_s \g_h, \delta_V (\nabla_s \g_h)\rangle ds - \int_{\RZ} \langle \nabla_s \g, \delta_V (\nabla_s \g)\rangle  ds \Big) 
\\ &= \varepsilon \int_{\RZ} \langle \partial_s^3 \g_h+ |\kappa_h|^2 \tau_h, \partial_s^3 V + \partial_s^2 V \ast \tau_h \ast \kappa_h \\ & \qquad \qquad+ \partial_s V \ast (\tau_h \ast \partial_s\kappa_h + \kappa_h \ast \kappa_h + \kappa_h \ast \kappa_h \ast \tau_h) \rangle ds
\\ & \quad -  \varepsilon \int_{\RZ} \langle \partial_s^3 \g+ |\kappa|^2 \tau, \partial_s^3 V + \partial_s^2 V \ast \tau \ast \kappa + \partial_s V \ast (\tau \ast \partial_s\kappa + \kappa \ast \kappa + \kappa \ast \kappa \ast \tau) \rangle ds
\\ & = \varepsilon  \int_{\RZ}|\partial^3_s \g_h - \partial^3_s \g|^2 ds +\varepsilon( I + II  + III ), 
\end{align*}
where
\begin{align*}
 I &= \int_{\RZ} \langle |\kappa_h|^2 \tau_h - |\kappa|^2 \tau, \partial_s^3 V \rangle  ds ,\\
 II & = \int_{\RZ} \langle\partial_s^3 \g_h,  \partial_s^2 V \ast \kappa_h \ast \tau_h + \partial_s V \ast (\tau_h \ast \partial_s\kappa_h + \kappa_h \ast \kappa_h + \kappa_h \ast \kappa_h \ast \tau_h ) \rangle ds
 \\ & \quad - \int_{\RZ} \langle \partial_s^3 \g, \partial_s^2 V \ast \kappa \ast \tau + \partial_s V \ast (\tau \ast \partial_s\kappa + \kappa \ast \kappa + \kappa \ast \kappa \ast \tau ) \rangle ds,
 \\ 
 III & =  \int_{\RZ} \langle |\kappa_h|^2 \tau_h,\partial_s^2 V \ast \kappa_h \ast \tau_h + \partial_s V \ast (\tau_h \ast \partial_s\kappa_h + \kappa_h \ast \kappa_h + \kappa_h \ast \kappa_h \ast \tau_h ) \rangle ds
 \\ & \quad - \int_{\RZ} \langle |\kappa|^2  \tau , \partial_s^2 V \ast \kappa \ast \tau + \partial_s V \ast (\tau \ast \partial_s\kappa + \kappa \ast \kappa + \kappa \ast \kappa \ast \tau ) \rangle ds.
\end{align*}

For the term I we see
\begin{align*}
 |I| \leq \|\partial_s^3 V\|_{L^2} \||\kappa_h|^2 \tau_h - |\kappa|^2 \tau \|_{L^2}
 \leq 2 \|\partial_s \kappa\|_{L^2 } \||\kappa_h|^2 \tau_h - |\kappa|^2 \tau \|_{L^2}.
\end{align*}
The equality 
$$
 |\kappa_h|^2 \tau_h - |\kappa|^2 \tau =  (|\kappa_h| - |\kappa|) |\kappa_h| \tau_h + |\kappa| (|\kappa_h| -| \kappa|) \tau_h + |\kappa|^2 (\tau_h - \tau)
$$
together with \eqref{eq:Hoelder} and Gagliardo-Nirenberg interpolation estimates (cf. Lemma \ref{lem:interpolationkappa}) imply that 
\begin{align*}
 \||\kappa_h|^2 \tau_h - |\kappa|^2 \tau \|_{L^2} &\leq |h|^{\frac 1 2}(2  (\hol_{\frac 1 2} \kappa ) \|\kappa\|_{L^2}+ (\hol_{\frac 1 2} \tau) \|\kappa\|_{L^4}^2  ) 
 \\ & \leq C |h|^{\frac 1 2} (\|\partial_s \kappa\|_{L^2} \|\kappa\|_{L^2} + \|\kappa\|_{L^2}  \|\kappa\|_{L^4}^2)
 \\ & \leq    C |h|^{\frac 1 2} (\|\partial_s \kappa \|_{L^2}   \|\kappa\|_{L^2} +  \|\partial_s \kappa\|_{L^2}^{\frac 1 2}\|\kappa\|^{1 +\frac 32}_{L^2})
\\ & \leq C |h|^{\frac 1 2} (\|\partial_s \kappa\|_{L^2} + 1)
\end{align*}
as $\|\kappa\|_{L^2}$ is bounded.
Hence,
$$
 \varepsilon |I| \leq C |h|^{\frac 1 2} \varepsilon (\|\partial_s \kappa\|_{L^2}^2 + \|\partial_s \kappa\|_{L^2} ) \leq C |h|^{\frac 12 } \varepsilon(\|\partial_s \kappa\|_{L^2}^2 +1) \leq C |h|^{\frac 12},
$$
where we applied \eqref{eq:BoundEnergy}.
To see that the same estimate holds for the terms in II, we observe by \eqref{eq:EstimateV} 
\begin{align*}
 \int_{\RZ} \langle\partial_s^3 \g, \partial^2 V \ast \kappa \ast \tau\rangle  ds
 \leq \|\partial_s \kappa \|_{L^2} \|\kappa\|_{L^2 }\|\partial^2 V\|_{L^\infty}
 \leq C |h|^{\frac 1 2 } \|\partial_s \kappa\|_{L^2}^2 
\end{align*}
and
\begin{align*}
  \int_{\RZ} \langle \partial_s^3 \g, \partial_s V \ast &( \tau \ast \partial_s\kappa + \kappa \ast \kappa + \kappa \ast \kappa \ast \tau) \rangle ds \\ &\leq C \|\partial_s \kappa\|_{L^2} \|\partial_s V\|_{\infty} (\|\partial_s \kappa\|_{L^2} + \|\kappa\|^2_{L^4}) \
  \\ &\leq C |h|^{\frac 1 2} (\|\partial_s \kappa\|_{L^2}^2 \|\kappa\|_{L^2} + \|\partial_s \kappa\|_{L^2}^{1+\frac 1 2} \|\kappa\|^{1+\frac 3 2}_{L^2}) \\
  & \leq C |h|^{\frac 12} (\|\partial_s \kappa\|_{L^2}^2 +1) ,
\end{align*}
where again we used \eqref{eq:BoundEnergy} and the interpolation estimates stated in Lemma \ref{lem:interpolationkappa}.
As the other terms in $II$ envolving $\g_h$ instead of $\g$ can be estimated in precisely the same way, we obtain
$$
  \varepsilon | II | \leq C |h|^{\frac 1 2} \varepsilon (\|\partial_s \kappa\|_{L^2}^2 +1) \leq C |h|^{\frac 1 2}.
$$
Similarly,
\begin{align*}
| III | &\leq C ( \|\kappa\|^3_{L^3}\|\partial_s^2 V\|_{L^\infty} +  \|\partial_s\kappa\|_{L^2} \|\kappa\|_{L^4}^2 \|\partial_s V\|_{L^{\infty}} + \|\kappa\|_{L^4}^4 \|\partial_s V\|_{L^{\infty}} ) 
\\ & \leq C |h|^{\frac 1 2} (\|\kappa\|_{L^3}^{3} \|\partial_s \kappa\|_{L^2} + \|\partial_s\kappa\|_{L^2}^{1+\frac 12} \|\kappa\|_{L^2}^{1+\frac 32}+  \|\partial_s\kappa\|_{L^2} \|\kappa\|_{L^2}^4) \\
%& \leq C |h|^{\frac 1 2} (\|\partial_s \kappa\|^{1+\frac 1 2 }_{L^2} \|\kappa\|_{L^2}^{\frac 5 2} + \|\kappa\|_{L^2}^3) \\
& \leq C |h|^{\frac 12} (\|\partial_s \kappa\|^{2 }_{L^2}  + 1)
\end{align*}
Hence, also
$$
   \varepsilon | III |  \leq C |h|^{\frac 1 2}.
$$
As finally
$$
 \int_{\RZ} |\nabla_s \kappa|^2 \langle \tau, \partial_s V \rangle ds
 \leq C |h|^{\frac 1 2} \|\nabla_s \kappa \|_{L^2}^2 \|\kappa\|_{L^2} \leq C |h|^{\frac 1 2} \|\nabla_s \kappa \|_{L^2}^2,
$$
this gives
$$
 \varepsilon \delta_V (\F (\g_h) - \F (g)) = \varepsilon \int_{\RZ} |\partial_s^3 \g_h - \partial_s^3 \g|^2 ds + R_{\F}
$$
with
$$
 |R_{\F}| \leq C |h|^{\frac 12}.
$$
\end{proof}

The terms in \eqref{eq:WF} containing $E^{(p)}_\delta$ can be estimated as follows.

\begin{lemma} \label{lem:APEP}We have 
$$
 \delta_ V \EPD (\g_h) - \delta_V \EPD(\g) =  \int_{\RZ}  \langle (|\kappa_h|^2 + \delta^2)^{\frac {p-2} 2}  \kappa_h - (|\kappa|^2 + \delta^2)^{\frac {p-2} 2} \kappa , \partial^2_s V \rangle ds   +  \mathcal R_{\EPD}
$$
with $|\mathcal R_{\EPD}| \leq C |h|^{\frac 12}$.
\end{lemma}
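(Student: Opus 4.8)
The plan is to follow the structure of the proof of Lemma~\ref{lem:APF}, but the computation here is considerably lighter because the leading term is kept in the mixed form appearing in the statement rather than being recombined into a square. First I would apply the first variation formula \eqref{eq:FirstVariationEpd} to both $\g_h$ and $\g$ and substitute the expression for $\delta_V \kappa$. Since $\g$ is parametrized by arc-length, so is the shifted curve $\g_h$, hence the formula applies verbatim with $\kappa,\tau$ replaced by $\kappa_h,\tau_h$ and with the same integration $ds=dx$. Because $\kappa$ is a normal field, $\langle \kappa,\tau\rangle=0$, so that
$$
(|\kappa|^2 + \delta^2)^{\frac{p-2}{2}}\langle \kappa, \delta_V \kappa\rangle = (|\kappa|^2 + \delta^2)^{\frac{p-2}{2}}\langle \kappa, \partial_s^2 V\rangle - 2 (|\kappa|^2 + \delta^2)^{\frac{p-2}{2}}|\kappa|^2 \langle \partial_s V, \tau\rangle,
$$
and likewise with $\kappa$ replaced by $\kappa_h$ and $\tau$ by $\tau_h$. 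The first summand on the right, together with its $\g_h$-counterpart, produces exactly the claimed leading term $\int_{\RZ}\langle (|\kappa_h|^2 + \delta^2)^{\frac{p-2}{2}}\kappa_h - (|\kappa|^2 + \delta^2)^{\frac{p-2}{2}}\kappa, \partial_s^2 V\rangle ds$; everything else I would collect into $\mathcal R_{\EPD}$.

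The remainder $\mathcal R_{\EPD}$ then consists of the difference of the two second summands above together with the difference of the two length-type contributions $\tfrac 1p (|\kappa|^2 + \delta^2)^{\frac p2}\langle \tau, \partial_s V\rangle$. The crucial observation is that each of these terms carries a factor of $\partial_s V$ and a factor bounded pointwise by $(|\kappa|^2 + \delta^2)^{\frac p2}$, using $(|\kappa|^2 + \delta^2)^{\frac{p-2}{2}}|\kappa|^2 \leq (|\kappa|^2 + \delta^2)^{\frac p2}$. Hence I would estimate each term separately rather than exploit the difference structure: by \eqref{eq:EstimateV} we have $\|\partial_s V\|_{L^\infty}\leq C|h|^{\frac 12}\|\kappa\|_{L^2}$, while the energy bound \eqref{eq:BoundEnergy} controls $\int_{\RZ}(|\kappa|^2 + \delta^2)^{\frac p2}ds \leq p\,\ER(\g)$ as well as $\|\kappa\|_{L^2}$. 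Since $\g_h$ is arc-length parametrized, $\int_{\RZ}(|\kappa_h|^2 + \delta^2)^{\frac p2}ds$ equals its un-shifted value by translation invariance, so each term is bounded by $C|h|^{\frac 12}$ with $C = C(p,\lambda,\ER(\g))$, giving $|\mathcal R_{\EPD}|\leq C|h|^{\frac 12}$.

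Unlike in Lemma~\ref{lem:APF}, there is no genuine analytic obstacle here: the leading term is not estimated at all but simply read off, and the remainder is purely zeroth or first order in $\partial_s V$. The only points requiring a little care are to verify that the factors $(|\kappa|^2+\delta^2)^{\frac{p-2}{2}}|\kappa|^2$ are dominated by $(|\kappa|^2+\delta^2)^{\frac p2}$ so that the $L^1$-bound from the energy applies uniformly in $\delta$, and to note that $\|\partial_s^2 V\|_{L^\infty}$ (which would be controlled only $\varepsilon$-non-uniformly via \eqref{eq:EstimateV}) never enters the estimate, since $\partial_s^2 V$ appears exclusively inside the retained leading term.
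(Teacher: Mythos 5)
Your proposal is correct and follows essentially the same route as the paper: expand $\delta_V\EPD$ via \eqref{eq:FirstVariationEpd}, use $\langle\kappa,\tau\rangle=0$ to read off the leading term $\langle(|\kappa_h|^2+\delta^2)^{\frac{p-2}{2}}\kappa_h-(|\kappa|^2+\delta^2)^{\frac{p-2}{2}}\kappa,\partial_s^2V\rangle$, and bound each remainder term separately by $\|\partial_s V\|_{L^\infty}\cdot\int(|\kappa|^2+\delta^2)^{\frac p2}ds\leq C|h|^{\frac 12}$ using \eqref{eq:EstimateV} and the energy bound. Your added remarks on $\delta$-uniformity and on why $\partial_s^2V$ must stay inside the retained leading term are consistent with, and slightly more explicit than, the paper's argument.
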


\begin{proof}
We decompose
\begin{align*}
\int_{\RZ} (|\kappa_h|^2 + \delta^2)^{\frac {p-2} 2} & \langle \kappa_h, \delta_V (\kappa_h) \rangle - (|\kappa|^2 + \delta^2)^{\frac {p-2} 2} \langle \kappa , \delta_V (\kappa) \rangle ds
\\ & = \int_{\RZ} (|\kappa_h|^2 + \delta^2)^{\frac {p-2} 2}  \langle \kappa_h , \partial_s^2 V + \partial_s V \ast \kappa_h \ast \tau_h\rangle  
\\ & \quad - (|\kappa|^2 + \delta^2)^{\frac {p-2} 2} \langle \kappa , \partial_s^2 V + \partial_s V \ast \kappa \ast \tau \rangle ds \\
\\ & =  \int_{\RZ} \langle (|\kappa_h|^2 + \delta^2)^{\frac {p-2} 2}  \kappa_h - (|\kappa|^2 + \delta^2)^{\frac {p-2} 2} \kappa , \partial^2_s V \rangle ds + I
\end{align*}
where this time
$$
I =  \int_{\RZ} (|\kappa_h|^2 + \delta^2)^{\frac {p-2} 2}  \langle \kappa_h, \partial_s V \ast \kappa_h \ast \tau _h \rangle ds -  \int_{\RZ} (|\kappa|^2 + \delta^2)^{\frac {p-2} 2}  \langle \kappa, \partial_s V \ast \kappa \ast \tau \rangle ds. 
$$
Observe that 
$$
 | I | \leq C\|\partial_sV\|_{L^\infty} \int_{\RZ} (|\kappa|^2 + \delta ^2 )^{\frac {p-2} 2} |\kappa|^2 ds  \leq C \|\kappa\|_{L^2 } \E^{(p)}_{\delta} (\g) \leq C |h|^{\frac 12}.
$$
Furthermore, we have 
$$
 |\tfrac 1p \int_{\RZ}(|\kappa|^2 + \delta^2)^{\frac {p} 2 } \langle \partial_s V, \tau\rangle ds| \leq \E^{(p)}_{\delta} (\g) \|\partial_s V\|_{L^\infty} \leq C |h|^{\frac 1 2}.
$$
This proves the claim.
\end{proof}

\begin{lemma} \label{lem:APL}We have 
	$$
	\delta_V \Ell(\g_h) - \delta_V \Ell(\g) = \mathcal R_{\Ell}
	$$
	with $|\mathcal R_{\Ell}| \leq C |h|^{\frac 12}$.
\end{lemma}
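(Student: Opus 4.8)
The plan is to read off both first variations from the length formula \eqref{eq:FirstVariationLength} and then exploit the fact that the finite difference $V = \g_h - \g$ is built from one and the same curve. Since $\g$ is parametrized by arc-length and $\g_h(\cdot) = \g(\cdot+h)$ is merely a shift of the parameter, both curves have the same constant speed, hence the same arc-length measure $ds$ and the same arc-length derivative $\partial_s$. Applying \eqref{eq:FirstVariationLength} once to $\g$ and once to $\g_h$ therefore gives
\begin{equation*}
 \delta_V \Ell(\g) = \int_{\RZ} \langle \tau, \partial_s V\rangle\, ds, \qquad \delta_V \Ell(\g_h) = \int_{\RZ} \langle \tau_h, \partial_s V\rangle\, ds,
\end{equation*}
where $\tau_h = \partial_s \g_h = \tau(\cdot + h)$.

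First I would subtract these two identities. The key simplification is that $\partial_s V = \partial_s \g_h - \partial_s \g = \tau_h - \tau$, so the difference collapses into a perfect square:
\begin{equation*}
 \mathcal R_{\Ell} = \delta_V \Ell(\g_h) - \delta_V \Ell(\g) = \int_{\RZ} \langle \tau_h - \tau, \partial_s V\rangle\, ds = \int_{\RZ} |\tau_h - \tau|^2\, ds.
\end{equation*}
In particular the remainder is nonnegative; this is a convenient structural observation, though not strictly needed for the estimate.

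It then remains to bound this integral. The pointwise version of the Hölder estimate \eqref{eq:Hoelder} — the fundamental theorem of calculus together with Hölder's inequality applied to $\partial_s \tau = \kappa$ — yields $|\tau_h(x) - \tau(x)| \leq |h|^{\frac 12} \|\kappa\|_{L^2}$ for every $x$ and every $|h| < 1$. Squaring and integrating over $\RZ$ gives
\begin{equation*}
 |\mathcal R_{\Ell}| \leq |h|\, \|\kappa\|_{L^2}^2\, \Ell(\g) \leq C\, |h| \leq C\, |h|^{\frac 12},
\end{equation*}
where I used that $\|\kappa\|_{L^2}$ and $\Ell(\g)$ are controlled in terms of $\ER(\g)$ via \eqref{eq:BoundEnergy} and the length bounds, and that $|h| \leq |h|^{\frac 12}$ for $|h| < 1$. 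This establishes the lemma.

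There is essentially no serious obstacle here; this is the mildest of the three remainder estimates. The only point requiring a moment's care is the observation that the shift $\g \mapsto \g_h$ leaves both $ds$ and $\partial_s$ unchanged, so that the two first variations may be subtracted term by term and $\partial_s V$ simplifies exactly to $\tau_h - \tau$. Once this is noted, the bound is immediate — and in fact one full power of $|h|$ stronger than the asserted $|h|^{\frac 12}$.
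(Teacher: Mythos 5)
Your proof is correct and follows essentially the same route as the paper: subtract the two first-variation formulas \eqref{eq:FirstVariationLength} and control the result via the $|h|^{\frac 12}$ bound on $\partial_s V$ (equivalently on $\tau_h-\tau$) coming from \eqref{eq:Hoelder}/\eqref{eq:EstimateV}. Your additional observation that $\partial_s V=\tau_h-\tau$ turns the difference into $\int_{\RZ}|\tau_h-\tau|^2\,ds$, giving nonnegativity and the sharper rate $C|h|$, is a nice refinement but not needed for the stated estimate.
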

\begin{proof}
	The first variation of length \eqref{eq:FirstVariationLength} together with \eqref{eq:EstimateV} leads to
	$$
	\delta_V \Ell(\g_h) - \delta_V \Ell(\g) \leq C \|\partial_s V\|_{L^\infty} \leq C |h|^{\frac 12},
	$$
	which proves the statement.
\end{proof}

The following lemma tells us, that the main part in Lemma \ref{lem:APEP} has the similar monotonicity properties as the $p$-Laplace operator. 
\begin{lemma} \label{lem:MonotonicityEP}
We have 
$$
\int_{\RZ}  \langle (|\kappa_h|^2 + \delta^2)^{\frac {p-2} 2}\kappa_h - (|\kappa|^2 + \delta^2)^{\frac {p-2} 2} \kappa , \partial^2_s V \rangle ds  \geq  \frac 1 {4^{p-1}} \int_{\RZ} |\kappa_h - \kappa|^p ds.
$$
\end{lemma}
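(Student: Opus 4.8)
The strategy is to reduce the claimed integral bound to a pointwise monotonicity inequality for the map $F_\delta(\xi) := (|\xi|^2 + \delta^2)^{\frac{p-2}{2}}\xi$ on $\R^n$. Since $\g$ is parametrized by arc length we have $\partial_s = \partial_x$ and $\partial_s^2\g = \kappa$; applying this to $\g$ and to $\g_h(\cdot)=\g(\cdot+h)$ gives $\partial_s^2 V = \partial_s^2(\g_h - \g) = \kappa_h - \kappa$. Substituting this identity, the integrand on the left becomes exactly $\langle F_\delta(\kappa_h) - F_\delta(\kappa),\, \kappa_h - \kappa\rangle$, so it suffices to prove the pointwise estimate
\[
  \langle F_\delta(a) - F_\delta(b),\, a-b\rangle \geq \frac{1}{4^{p-1}}\,|a-b|^p
\]
for all $a,b \in \R^n$ and then integrate over $\RZ$.

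For the pointwise inequality I would integrate along the segment $w(t) = b + t(a-b)$, $t\in[0,1]$, writing $F_\delta(a) - F_\delta(b) = \int_0^1 \tfrac{d}{dt}F_\delta(w(t))\,dt$. Differentiating $F_\delta$ and pairing with $a - b$ produces
\[
  \Big\langle \tfrac{d}{dt}F_\delta(w),\, a-b\Big\rangle = (|w|^2+\delta^2)^{\frac{p-2}{2}}|a-b|^2 + (p-2)(|w|^2+\delta^2)^{\frac{p-4}{2}}\langle w, a-b\rangle^2 ,
\]
and, as $p\geq 2$, both terms are nonnegative; discarding the second and using $|w|^2 + \delta^2 \geq |w|^2$ gives
\[
  \langle F_\delta(a) - F_\delta(b),\, a-b\rangle \geq |a-b|^2 \int_0^1 |w(t)|^{p-2}\,dt .
\]

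It then remains to bound the scalar integral from below, and this is the only genuinely delicate point. Setting $r = |a-b|$, I would use that $t\mapsto |w(t)|$ is convex with constant speed $|w'| = r$, so that the sublevel set $\{t : |w(t)| < r/4\}$ is an interval whose endpoints $t_1 < t_2$ satisfy $r(t_2-t_1) = |w(t_2) - w(t_1)| \leq |w(t_1)| + |w(t_2)| = r/2$; hence this set has length at most $\tfrac12$, and $|w(t)| \geq r/4$ on a subset of $[0,1]$ of measure at least $\tfrac12$. Consequently
\[
  \int_0^1 |w(t)|^{p-2}\,dt \geq \tfrac12\,(r/4)^{p-2} = \frac{r^{p-2}}{2\cdot 4^{p-2}} \geq \frac{r^{p-2}}{4^{p-1}},
\]
and multiplying by $r^2 = |a-b|^2$ closes the pointwise inequality. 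The remaining steps---substituting $\partial_s^2 V$ and integrating the pointwise bound---are routine, and the convexity of $F_\delta$ needed above is exactly what $p\geq 2$ buys us. As an alternative to the segment argument one may absorb $\delta$ by passing to $\tilde a = (a,\delta)$, $\tilde b = (b,\delta)\in\R^{n+1}$ and invoke the classical monotonicity of the $p$-Laplacian in $\R^{n+1}$, but the estimate above is self-contained.
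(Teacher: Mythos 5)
Your proof is correct and follows essentially the same route as the paper: reduce to the pointwise monotonicity of $\xi\mapsto(|\xi|^2+\delta^2)^{\frac{p-2}{2}}\xi$ via the fundamental theorem of calculus along the segment $w(t)=b+t(a-b)$, discard the nonnegative $(p-2)$-term, and bound $|w(t)|$ from below by $\tfrac14|a-b|$ on a set of definite measure. The only difference is cosmetic: the paper assumes without loss of generality $|v|\geq|w|$ and works on $t\in[0,\tfrac14]$, whereas you use convexity of $t\mapsto|w(t)|$ to get a set of measure at least $\tfrac12$; both yield the constant $4^{1-p}$.
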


\begin{proof}
For vectors $w,v$  we set $w_t = v + (w-v) t$. Using the fundamental theorem of calculus, we obtain 
\begin{align*}
\langle (|w|^2 +\delta^2)^{\frac {p-2} 2} w &- (|v|^2+ \delta^2)^{\frac{p-2}2} v , w-v \rangle
\\ &= \int_0^1  (p-2) (|w_t|^2 + \delta^2)^{\frac {p-4}{2}}
\langle w_t, (w-v)\rangle ^2 dt
\\ & \quad +  \int_{0}^1 (|w_t|^2 + \delta^2)^{\frac {p-2}{2}} |w-v|^2  dt
\\ & \geq  \int_{0}^1 (|w_t|^2 + \delta^2)^{\frac {p-2}{2}} |w-v|^2 dt.
\end{align*}
If we now assume that $|v|\geq |w|$, we get
for $t \in [0,\tfrac 1 4]$
\\
$$
 |w_t| \geq |v| - t (|w|+|v|) \geq |v| - \frac 12 |v| = \frac { |v|} 2 \geq \frac 14 |w-v|.  
$$
So,
\begin{align*}
\langle (|w|^2 +\delta^2)^{\frac {p-2} 2} w - (|v|^2+ \delta^2)^{\frac{p-2}2} v , w-v \rangle &\geq \int_0^{\frac 1 4}  (|w_t|^2 + \delta^2)^{\frac {p-2}{2}} |w-v|^2 dt  \\ & \geq 
 \frac 1 {4^{p-1}}|w-v|^p 
\end{align*}
and hence
$$
\int_{\RZ} \langle (|\kappa_h|^2 + \delta^2)^{\frac {p-2} 2}  \kappa_h -  (|\kappa|^2 + \delta^2)^{\frac {p-2} 2} \kappa , \partial^2 _s V \rangle ds  \geq  \frac 1{4^{p-1}}\int_{\RZ} |\kappa_h - \kappa|^p ds.
$$
\end{proof}

Now we can prove Theorem \ref{thm:HigherRegularity}.
\begin{proof} [Proof of Theorem \protect{\ref{thm:HigherRegularity}}]

Note that 
$$
 \int_{\RZ} (g_h - g) Vds \leq C \|g\|_{L^2} |h|^{\frac 12}.
$$

Using equation \eqref{eq:QuasiSolution} together with the Lemmata \ref{lem:APF} and \ref{lem:APEP} and the estimate above, we get
\begin{multline*}
 \varepsilon \int_{\RZ} |\partial^3_s \g_h - \partial_s^3 \g|^2 ds +  \int_{\RZ} \langle (|\kappa_h|^2 + \delta^2)^{\frac {p-2} 2}   \kappa_h  -(|\kappa|^2 + \delta^2)^{\frac {p-2} 2}   \kappa , \partial_s^2 V \rangle ds \\  \leq C |h|^{\frac 1 2} (\|g\|_{L^2} + 1 ).
\end{multline*}
Together with Lemma \ref{lem:MonotonicityEP} and the definition of the Besov spaces we hence obtain 
$$
 \varepsilon \|\partial_s^3 \g\|_{B^{\frac 1 4}_{2,\infty}}^2 + \|\kappa\|^p_{B^{\frac 1{2p}}_{p, \infty}} \leq C (\|g\|_{L^2} + 1 ).
$$
\end{proof}

Let us list some immediate consequences for solutions to the gradient flow for the regularized energies. Firstly we observe that the highest order part grows slower than the trivial bound $\varepsilon \F (\g) < C$ might suggest:

\begin{corollary} \label{cor:FastDecayF}
For any solution $\rg:[0,T) \times \RZ \rightarrow \R^n$ of Equation \eqref{regflow} we get
$$
  \varepsilon \int_0^T \int_{\RZ} |\partial^3_s \rg| ^2 ds dt \leq \varepsilon^{\frac 1 5} C (T+1)
$$ 
where the constant $C=C(p,\lambda, \ER(\rg_0))>0$ depends only on $p, \lambda,$ and $\ER (\rg_0)$.
\end{corollary}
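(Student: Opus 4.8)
The plan is to apply the higher regularity estimate of Theorem~\ref{thm:HigherRegularity} pointwise in time and then interpolate to trade the strong Besov control for an $L^2$ bound on $\partial_s^3\rg$ that carries a favourable power of $\varepsilon$. First I would fix a time $t$ and (if necessary) reparametrize $\rg$ by arc-length; since \eqref{regflow} is a purely normal flow, $g := \nabla_{L^2}\ER(\rg) = -\partial_t\rg$, and every quantity below is parametrization invariant. Because the energy is monotone by \eqref{eq:energyidentity}, we have $\ER(\rg) \le \ER(\rg_0)$ for all $t$, so the constant in Theorem~\ref{thm:HigherRegularity} may be taken uniform in $t$ (and in $\varepsilon,\delta$), giving
\begin{equation*}
  \varepsilon\,\|\partial_s^3\rg\|_{B^{1/4}_{2,\infty}}^2 \le C\bigl(1+\|\partial_t\rg\|_{L^2}\bigr),\qquad C = C(p,\lambda,\ER(\rg_0)).
\end{equation*}

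The second step is the interpolation that produces the exponent $\tfrac15$. From \eqref{eq:BoundEnergy} together with the two-sided length bound one extracts a uniform bound $\|\kappa\|_{L^2} \le C$, i.e. $\|\partial_s^2\rg\|_{L^2} \le C$. I would then interpolate $\|\partial_s^3\rg\|_{L^2}$ between $\|\partial_s^2\rg\|_{L^2}$ and $\|\partial_s^3\rg\|_{B^{1/4}_{2,\infty}}$ by the standard real-interpolation inequality on the scale $B^{s}_{2,q}$. The interpolation parameter is forced by the regularity count $3 = (1-\theta)\cdot 2 + \theta\cdot\tfrac{13}{4}$, hence $\theta = \tfrac45$, yielding
\begin{equation*}
  \|\partial_s^3\rg\|_{L^2} \le C\,\|\partial_s^2\rg\|_{L^2}^{1/5}\,\|\partial_s^3\rg\|_{B^{1/4}_{2,\infty}}^{4/5}.
\end{equation*}
Using $\|\partial_s^2\rg\|_{L^2}\le C$ and the first display, $\|\partial_s^3\rg\|_{B^{1/4}_{2,\infty}}^{8/5} \le C\,\varepsilon^{-4/5}(1+\|\partial_t\rg\|_{L^2})^{4/5}$, and since $\varepsilon\cdot\varepsilon^{-4/5}=\varepsilon^{1/5}$ (with $\varepsilon<1$ to absorb the additive constant) we arrive at the pointwise-in-time bound $\varepsilon\,\|\partial_s^3\rg\|_{L^2}^2 \le C\,\varepsilon^{1/5}\bigl(1+\|\partial_t\rg\|_{L^2}\bigr)^{4/5}$.

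Finally I would integrate in time. The energy identity \eqref{eq:energyidentity} gives $\int_0^T\|\partial_t\rg\|_{L^2}^2\,dt = \ER(\rg_0)-\ER(\rg_T) \le \ER(\rg_0)$, and H\"older's inequality with the conjugate exponents $\tfrac52$ and $\tfrac53$ (note $\tfrac45\cdot\tfrac52 = 2$) bounds
\begin{equation*}
  \int_0^T\bigl(1+\|\partial_t\rg\|_{L^2}\bigr)^{4/5}\,dt \le \Bigl(\int_0^T\bigl(1+\|\partial_t\rg\|_{L^2}\bigr)^2\,dt\Bigr)^{2/5}T^{3/5} \le \bigl(C(T+1)\bigr)^{2/5}T^{3/5} \le C(T+1).
\end{equation*}
Multiplying the pointwise bound by $C\varepsilon^{1/5}$ and integrating delivers $\varepsilon\int_0^T\int_{\RZ}|\partial_s^3\rg|^2\,ds\,dt \le \varepsilon^{1/5}C(T+1)$, as claimed.

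The time integration and the energy bookkeeping are routine; the point requiring care—and the main obstacle—is the interpolation step, where one must confirm that the Besov–Nikolski\u{i} norm supplied by Theorem~\ref{thm:HigherRegularity} genuinely interpolates with the $L^2$ norm of $\partial_s^2\rg$ to give exactly $\theta = \tfrac45$, and hence the weight $\varepsilon^{1/5}$. One should also check that every constant remains uniform in $\varepsilon$ and $\delta$, so that no hidden growth as $\varepsilon,\delta\downarrow 0$ spoils the final estimate; this is guaranteed because the constant in Theorem~\ref{thm:HigherRegularity} depends only on $p,\lambda$ and $\ER(\rg)\le\ER(\rg_0)$.
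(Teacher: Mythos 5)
Your argument is correct and follows essentially the same route as the paper: apply Theorem~\ref{thm:HigherRegularity} along the flow with $g=\partial_t\rg$, interpolate $\|\partial_s^3\rg\|_{L^2}$ between $\|\kappa\|_{L^2}$ and the $B^{1/4}_{2,\infty}$-norm with exponent $\theta=\tfrac45$ to produce the weight $\varepsilon^{1/5}$, and control the time integral via the energy identity. The only difference is bookkeeping—you stay pointwise in time and finish with H\"older in $t$, whereas the paper integrates the Besov bound in time first and then absorbs via Young's inequality—but the two are equivalent.
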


\begin{proof}
From Theorem \ref{thm:HigherRegularity} we ge by integrating over time and using H\"older's inequality
\begin{equation}
\begin{aligned}
 \varepsilon \int_{0}^T \|\partial^3_s \rg \|^2_{B^{\frac 1 4}_{2,\infty}}  dt 
 & \leq C\int_0^T (\|\partial_t \rg\|_{L^2} +1 ) ds
 \\ &  \leq C  T^{\frac 1 2}  \Big(\int_{0}^T \int_{\RZ} |\partial_t \rg |^2 dt \Big)^{\frac 1 2}  + C T
 \\ & \leq C (T^{\frac 1 2} + T).
 \end{aligned}
\end{equation}
Together with the interpolation estimate (cf. \cite{Triebel1983})
$$
 \|\partial_s^3 \rg\|_{L^2} \leq C  \|\partial_s^2 \g\|^{\frac 45}_{B^{1+\frac 14}_{2,\infty}} \|\kappa\|^{\frac 15}_{L^2}, %
$$
this implies using Young's inequality 
\begin{align*}
 \varepsilon  \int_{0}^T \int_{\RZ} | \partial_s^3 \rg |^2 ds dt & \leq \varepsilon^{\frac 15} C (\int_0^T  \varepsilon ^{\frac 45}\|\partial_s^3 \rg\|^{\frac 85}_{B^{\frac 14}_{2, \infty}} dt  +1) 
 \\ & \leq \varepsilon^{\frac 1 5} C (\int_0^T  \varepsilon \|\partial_s^3 \rg\|^{2}_{B^{\frac 14}_{2, \infty}} dt  +1)
 \leq C \varepsilon^{ \frac 15} (T+1).
\end{align*}
\end{proof}

Furthermore, just integrating over the estimate in Theorem \ref{thm:HigherRegularity}. using H\"older's inequality, and the uniform bound on the $L^p$ norm of $\kappa$ we get the following Corollary.

\begin{corollary} \label{cor:EstimateKappa}
For any solution $\rg:[0,T) \times \RZ\rightarrow \R^n$  of Equation \eqref{regflow} we get
$$
  \int_0^T  \|\kappa\|^p_{B^{\frac 1 {2p}}_{p,\infty}}  dt \leq C (T+1)
$$ 
where the constant $C=C(p,\lambda, \ER(\rg_0))>0$ depends only on $p, \lambda,$ and $\ER (\rg_0)$.
\end{corollary}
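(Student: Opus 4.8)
The plan is to apply Theorem \ref{thm:HigherRegularity} at each fixed time and then integrate in $t$, exactly as the sentence preceding the statement suggests. Along a solution of \eqref{regflow} the right-hand side $g$ of \eqref{eq:QuasiSolution} equals $-\partial_t \rg$, so that $\|g\|_{L^2} = \|\partial_t \rg\|_{L^2}$, and since the flow keeps each curve parametrized by arc length we have $\partial_s^2 \rg = \kappa$. Discarding the nonnegative $\varepsilon$-contribution, Theorem \ref{thm:HigherRegularity} then gives, for each $t \in [0,T)$,
$$
 \|\kappa\|_{B^{\frac 1{2p}}_{p,\infty}}^p \leq C\bigl(1 + \|\partial_t \rg\|_{L^2}\bigr).
$$

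The first point requiring care is the uniformity of the constant. The theorem produces $C = C(p,\lambda,\ER(\rg_t))$, and I would upgrade this to a constant depending only on $p$, $\lambda$, and $\ER(\rg_0)$ by invoking the energy identity \eqref{eq:energyidentity}, which shows that $t \mapsto \ER(\rg_t)$ is nonincreasing, whence $\ER(\rg_t) \leq \ER(\rg_0)$; the uniform $L^p$-bound on $\kappa$ in \eqref{eq:proof2} reflects the same monotonicity. Crucially, this constant carries no dependence on $\varepsilon$ or $\delta$, which is what makes the estimate useful later for sending $\varepsilon, \delta \downarrow 0$.

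Integrating the displayed inequality over $[0,T]$ then reduces matters to controlling $\int_0^T \|\partial_t \rg\|_{L^2}\,dt$. Here I would apply Hölder's inequality in time, $\int_0^T \|\partial_t \rg\|_{L^2}\,dt \leq T^{1/2}\bigl(\int_0^T \|\partial_t \rg\|_{L^2}^2\,dt\bigr)^{1/2}$, and bound the $L^2$-in-time norm of the velocity by integrating the energy identity \eqref{eq:energyidentity} from $0$ to $T$, which yields $\int_0^T \|\partial_t \rg\|_{L^2}^2\,dt = \ER(\rg_0) - \ER(\rg_T) \leq \ER(\rg_0)$. A final use of Young's inequality turns $T^{1/2}\ER(\rg_0)^{1/2}$ into a bound of the form $C(T+1)$, and assembling the pieces gives the claim.

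Since this is a corollary extracted directly from Theorem \ref{thm:HigherRegularity} by integration, I do not anticipate a genuine obstacle. The only delicate bookkeeping is the one flagged above, namely ensuring that the constant is uniform in time and independent of $\varepsilon$ and $\delta$, so that the final constant depends solely on $p$, $\lambda$, and $\ER(\rg_0)$ as claimed.
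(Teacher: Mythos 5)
Your proposal is correct and follows essentially the same route as the paper, which likewise obtains the corollary by integrating the estimate of Theorem \ref{thm:HigherRegularity} with $g=-\partial_t\rg$ over time, using the energy identity \eqref{eq:energyidentity} both for the monotonicity of $\ER(\rg_t)$ and for the bound $\int_0^T\|\partial_t\rg\|_{L^2}^2\,dt\leq\ER(\rg_0)$, followed by H\"older in time. One harmless slip: the purely normal flow \eqref{regflow} does \emph{not} preserve the constant-speed parametrization, but this is irrelevant here since $\kappa=\partial_s^2\g$ by definition and the theorem is applied curve by curve after reparametrization, exactly as the paper does.
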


\section{Convergence to solutions} \label{sec:Convergence}
\subsection{The case of smooth initial data}

We now show the following version of Theorem \ref{thm:LongTimeExistenceRElastic} for smooth initial data.

\begin{theorem} \label{thm:LongTimeExistenceRElasticSmoothData}
Given any regular closed curve  $\g_0: \RZ \rightarrow \mathbb R^n$ of class $C^\infty$ parametrized with constant speed, there is a family of regular curves $\g: [0,\infty) \times \RZ \rightarrow \mathbb R^n$, $\g \in H^1([0,\infty), L^2(\RZ, \R^n)) \cap L^\infty([0,\infty), W^{2,p}(\RZ, \R^n)) \cap C^{\frac 12} ([0,\infty), L^2 (\RZ, \mathbb R^n ))$ solving the initial value problem 
\begin{equation*}
	\begin{cases}
	\partial^\bot_t \g & = - \nabla_{L^2} \E^{(p)}(f) \\
	\g(0, \cdot ) &= \g_0
	\end{cases}
\end{equation*}
in the weak sense, i.e. for all $V\in C^\infty_c ([0, \infty) \times \RZ, \R^n)$ we have 
\begin{equation*}
 \int_0^\infty \int_{\R/\Z} \langle \partial_t^\bot \g, V \rangle ds dt = - \int_0^\infty (\delta_V \E^{(p)} (\g)) dt.
\end{equation*}
Furthermore, this solution satisfies
$$
 \|\partial_t \g\|_{L^2((0,  \infty) \times \RZ) } \leq C,  \qquad   \|\kappa\|_{L^p((0, T), B^{\frac 1 {2p}}_{p, \infty})}  \leq C (T+1), 
$$
and
$$
 \|\g_{t_1} - \g_{t_0}\|_{L^2} \leq C |t_1 - t_0|^{\frac 1 2}
$$
for all $t_0,t_1 \in [0,\infty)$, where $C>0$ only depends on $\EP (\g_0)$, $p$,  and $\lambda$.
\end{theorem}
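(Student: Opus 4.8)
The plan is to obtain $\g$ as a limit of the smooth solutions to the regularized flow \eqref{regflow} as $\varepsilon,\delta\downarrow 0$, reparametrizing each time slice by arc-length so that the a priori estimates of Section~\ref{sec:APriori} apply. For fixed $\varepsilon,\delta>0$, Theorem~\ref{thm:LongTimeExistenceRE} furnishes a smooth, globally defined solution; after reparametrization to constant speed its velocity splits as $\partial_t\g^{\varepsilon,\delta}=V^\bot+\varphi\,\tau$, where $V^\bot=-\nabla_{L^2}\ER(\g^{\varepsilon,\delta})$ is the geometric normal velocity and $\varphi$ is the tangential reparametrization velocity, determined by $\partial_s\varphi=\langle\kappa,V^\bot\rangle-\tfrac1{\Ell}\int_{\RZ}\langle\kappa,V^\bot\rangle\,ds$. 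First I would record the uniform bounds. From the energy identity \eqref{eq:energyidentity} one has $\int_0^\infty\int_{\RZ}|V^\bot|^2\,ds\,dt\le\ER(\g_0)$, and since $\|\kappa\|_{L^2}$ is bounded, $\|\varphi(t)\|_{L^\infty}\le\|\partial_s\varphi\|_{L^1}\le C\|\kappa\|_{L^2}\|V^\bot(t)\|_{L^2}$, so that the \emph{full} velocity $\partial_t\g^{\varepsilon,\delta}$ is bounded in $L^2((0,\infty)\times\RZ)$ by a constant depending only on $\EP(\g_0),p,\lambda$ (recall $\ER(\g_0)\to\EP(\g_0)$ as $\varepsilon,\delta\downarrow0$). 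Together with the length bound \eqref{eq:uniformboundL}, the bound $\int_{\RZ}|\kappa|^p\,ds\le p\,\ER(\g_0)$, and Corollary~\ref{cor:EstimateKappa}, this gives, uniformly in $\varepsilon,\delta$, control of $\Ell$ from above and below away from zero and of $\kappa$ in $L^p((0,T),B^{\frac1{2p}}_{p,\infty})$.

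The key step is compactness. Writing $\g^j=\g^{\varepsilon_j,\delta_j}$ with $\varepsilon_j,\delta_j\downarrow0$, the constant-speed normalization gives $\partial_x^2\g^j=(\Ell^j)^2\kappa^j$, so Corollary~\ref{cor:EstimateKappa} bounds $\g^j$ in $L^p((0,T),B^{2+\frac1{2p}}_{p,\infty})$, while the $L^2$-velocity bound yields the equicontinuity estimate $\|\g^j(t_1,\cdot)-\g^j(t_0,\cdot)\|_{L^2}\le C|t_1-t_0|^{\frac12}$. Since $B^{2+\frac1{2p}}_{p,\infty}\hookrightarrow\hookrightarrow W^{2,p}\hookrightarrow L^2$ on $\RZ$, the Aubin--Lions--Simon lemma makes $\{\g^j\}$ relatively compact in $L^p((0,T),W^{2,p})$; passing to a subsequence I obtain $\g^j\to\g$ in $L^p((0,T),W^{2,p})$ and, using $W^{2,p}\hookrightarrow\hookrightarrow C^1$, in $C([0,T],C^1)$, together with $\partial_t\g^j\rightharpoonup\partial_t\g$ weakly in $L^2$. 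In particular $\Ell^j\to\Ell$ and $\tau^j\to\tau$ uniformly and, crucially, $\kappa^j\to\kappa$ \emph{strongly} in $L^p((0,T)\times\RZ)$.

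With these convergences in hand I pass to the limit in the weak formulation $\int_0^\infty\int_{\RZ}\langle\partial^\bot_t\g^j,V\rangle\,ds\,dt=-\int_0^\infty\delta_V\ER(\g^j)\,dt$ for $V\in C_c^\infty$. The left-hand side converges to $\int_0^\infty\int_{\RZ}\langle\partial_t^\bot\g,V\rangle\,ds\,dt$ by weak $L^2$-convergence of the normal velocities $\partial_t^\bot\g^j=-\nabla_{L^2}\ER(\g^j)$ together with $\Ell^j\to\Ell$. On the right, the regularizing term $\varepsilon_j\,\delta_V\F(\g^j)$ vanishes: writing $\partial_s^3\g^j=\partial_s\kappa^j$ and $\nabla_s\kappa^j=\partial_s\kappa^j+|\kappa^j|^2\tau^j$ in \eqref{eq:FirstVariationF}, Corollary~\ref{cor:FastDecayF} forces $\varepsilon_j\int_0^T\int_{\RZ}|\partial_s\kappa^j|^2\to0$, while the quartic contribution $\varepsilon_j\int_0^T\int_{\RZ}|\kappa^j|^4$ is controlled via \eqref{eq:uniformboundgskappa} and Lemma~\ref{lem:interpolationkappa}, so altogether $\varepsilon_j\int_0^\infty\delta_V\F(\g^j)\,dt\to0$. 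The length term converges by \eqref{eq:FirstVariationLength} and the uniform convergence of $\tau^j$; and in the curvature term \eqref{eq:FirstVariationEpd} the strong $L^p$-convergence $\kappa^j\to\kappa$ together with $\delta_j\to0$ yields $(|\kappa^j|^2+\delta_j^2)^{\frac{p-2}2}\kappa^j\to|\kappa|^{p-2}\kappa$ strongly in $L^{\frac{p}{p-1}}$, so that $\delta_V\EPD(\g^j)\to\delta_V\big(\tfrac1p\int_{\RZ}|\kappa|^p\,ds\big)$. This produces a weak solution of the $\EP$-flow. The three quantitative bounds then follow: $\|\partial_t\g\|_{L^2}\le C$ by weak lower semicontinuity of the norm, the Besov estimate from Corollary~\ref{cor:EstimateKappa} and lower semicontinuity of $|\cdot|_{B^{\frac1{2p}}_{p,\infty}}$ under $L^p$-convergence, and the Hölder estimate by integrating $\partial_t\g$ in time as above.

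The main obstacle is the curvature term, where one must pass to the limit in the \emph{degenerate, nonlinear} quantity $(|\kappa^j|^2+\delta_j^2)^{\frac{p-2}2}\kappa^j$; mere weak $L^p$-convergence of $\kappa^j$ does not suffice, so everything hinges on upgrading to \emph{strong} convergence. Since the evolution equation is of high order and carries $\varepsilon$-dependent weights, no uniform time regularity of $\kappa^j$ is available directly; the way around this is to transfer the problem to $\g^j$, where the spatial Besov regularity of Theorem~\ref{thm:HigherRegularity} (via Corollary~\ref{cor:EstimateKappa}) and the merely $L^2$ time-equicontinuity combine through Simon's compactness criterion. A secondary but genuine technical point is the bookkeeping for the tangential reparametrization velocity $\varphi$: one must verify that it is controlled in $L^2$ by the normal velocity, so that the arc-length normalization---needed to identify $\partial_x^2\g^j$ with $(\Ell^j)^2\kappa^j$ and thereby run the compactness argument---does not destroy the uniform $L^2$-bound on $\partial_t\g^j$.
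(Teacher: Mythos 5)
Your proposal is correct and follows essentially the same route as the paper's proof: solutions of the regularized flow from Theorem~\ref{thm:LongTimeExistenceRE}, arc-length reparametrization with the tangential velocity controlled in $L^2$ by the normal one (the paper's Lemma~\ref{lem:PropApproxSolutions}), the uniform Besov and time-H\"older bounds from Corollaries~\ref{cor:FastDecayF} and~\ref{cor:EstimateKappa}, and a term-by-term passage to the limit in the weak formulation in which the $\varepsilon\,\delta_V\F$-term vanishes and the degenerate curvature term converges via strong $L^p$-convergence of $\kappa^j$. The only notable deviation is the compactness step, where you invoke Aubin--Lions--Simon with $B^{2+\frac{1}{2p}}_{p,\infty}\hookrightarrow\hookrightarrow W^{2,p}\hookrightarrow L^2$, while the paper first extracts local $L^\infty L^2$ (hence $L^\infty W^{1,\infty}$) convergence by a diagonal argument and then shows the sequence is Cauchy in $L^pW^{2,p}$ by interpolating the $B^{\frac{1}{2p}}_{p,\infty}$-bound on $\partial_s^2\g^{(n)}$ against $\|\tau_n-\tau_m\|_{L^p}$; both mechanisms deliver the same strong convergence of the curvatures, and the paper additionally uses Vitali's theorem where you use strong $L^{p/(p-1)}$-convergence of $(|\kappa^j|^2+\delta_j^2)^{\frac{p-2}{2}}\kappa^j$.
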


To construct this solution, we take solutions $\rg$ to the gradient flow of the regularized energies \eqref{regflow} with $0 < \varepsilon, \delta \leq 1$. We will now carefully reparametrize this family such that each curve is parametrized by constant speed.
Since $\rg$ is long-time solution for the gradient flow of $\ER$, we get 
\begin{equation*}
 \int_0^\infty \int_{\RZ} |\partial_t \rg|^2 dsdt \leq\ER(\rg_0).
 \end{equation*}
We now reparametrize the solution setting by constant speed setting
$$
 \trg(t,x) = \rg (t, \sigma_t^{\varepsilon, \delta } (x))
$$	
where $\sigma_t^{\varepsilon, \delta}$ is the inverse of the function  $\phi_t ^{\varepsilon, \delta } (x) =\frac 1 {\Ell(\rg)} \int_{x^{\varepsilon, \delta}}^x |(\rg_t (y))'| dy $.
Of course these reparametrized solutions solve the equation
$$
 \partial_t^{\bot}  \trg = - \nabla_{L^2} \ER (\trg) ,
$$
but they also satisfy 
\begin{align}
|(\trg) '| &= \Ell(\trg),  \label{eq:ConstantSpeed}\\
\partial_t^\bot \trg (t,0) &= \partial_t \trg (t,0), \label{eq:FullEquationAtAPoint}
\end{align}
and
\begin{equation} \label{eq:trg0}
 \int_0^\infty \partial_t  \trg(t,0) dt \leq C.
\end{equation}
We will now show that these furthermore have the following properties.

\begin{lemma} \label{lem:PropApproxSolutions}
The reparametrized solutions $\trg$ are parametrized by constant speed, solve the equation $\partial_t ^{\bot} \trg = - \nabla_{L^2} \ER (\trg),$ 
and satisfy the following estimates
\begin{enumerate} 
 \item $ \varepsilon \int_0^T \int_{\RZ} |\nabla_s \kappa|^2 ds dt \leq C \varepsilon^{\frac 1 5}(T+1),$ 
 \item  $\int_0^T \|\kappa\|^p_{B^{\frac 1 {2p}}_{p,\infty}} dt \leq C (T+1)$,
 \item $\int_{0}^\infty \int_{\RZ} |\partial_t \trg|^2 dxdt \leq C,$
  \item $\|\trg_{t_1} - \trg_{t_0} \|_{L^2}  \leq C |t_1-t_0|^{\frac 12}$ for all  $t_1, t_0 \in [0,\infty)$ with $|t_1 -t_0| \leq 1, $
\end{enumerate}
where $C>0$ is a constant only depending on the initial energy $\ER(\g_0)$.

\end{lemma}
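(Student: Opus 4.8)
The plan is to establish the four estimates in the order (1), (2), (3), (4), reducing the last two to the energy identity and the constant-speed constraint. Estimates (1) and (2) are essentially geometric and should follow by applying Theorem~\ref{thm:HigherRegularity} to each time slice of the constant-speed family $\trg$, exactly as in the proofs of Corollaries~\ref{cor:FastDecayF} and~\ref{cor:EstimateKappa}. Since $\trg_t$ and $\rg_t$ parametrize the same curve, the normal velocity $g_t := -\partial_t^\bot \trg_t = -\nabla_{L^2}\ER(\trg_t)$ lies in $L^2(ds)$, and by the energy identity \eqref{eq:energyidentity} together with the reparametrization-invariance of the normal velocity we have $\int_0^\infty \|g_t\|_{L^2}^2\,dt \leq \ER(\rg_0)$. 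Feeding this into Theorem~\ref{thm:HigherRegularity} and integrating in time with Hölder's inequality reproduces (2) verbatim and, after the interpolation and Young step of Corollary~\ref{cor:FastDecayF}, the factor $\varepsilon^{1/5}$ in (1). For (1) I would additionally use that $\nabla_s\kappa$ is the normal part of $\partial_s\kappa = \partial_s^3\trg$, so that $|\nabla_s\kappa| \leq |\partial_s^3\trg|$ pointwise and the bound on $\varepsilon\int_0^T\int_{\RZ}|\partial_s^3\trg|^2\,ds\,dt$ transfers directly.

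The crux is (3), where the genuinely new difficulty is the tangential part of the velocity produced by the constant-speed reparametrization. Writing $\partial_t\trg = V + \varphi\tau$ with $V = \partial_t^\bot\trg$ normal and $\varphi = \langle \partial_t\trg,\tau\rangle$, the energy identity controls $\int_0^\infty\|V\|_{L^2(ds)}^2\,dt \leq \ER(\rg_0)$, so everything reduces to estimating $\varphi$. The key observation is that the constraint \eqref{eq:ConstantSpeed} forces $ds = \Ell(\trg_t)\,dx$ with $\Ell$ independent of $x$; comparing this with the evolution equation \eqref{eq:eveqds} gives $\partial_s\varphi - \langle\kappa,V\rangle = \partial_t\log\Ell(\trg_t)$, a quantity independent of $x$. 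Integrating over the closed curve (where $\int_{\RZ}\partial_s\varphi\,ds = 0$) pins down $\partial_t\log\Ell = -\tfrac{1}{\Ell}\int_{\RZ}\langle\kappa,V\rangle\,ds$, so that $\partial_s\varphi = \langle\kappa,V\rangle - \tfrac{1}{\Ell}\int_{\RZ}\langle\kappa,V\rangle\,ds$ has vanishing mean.

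Using the anchoring $\varphi(t,0)=0$ coming from \eqref{eq:FullEquationAtAPoint} and integrating $\partial_s\varphi$ in arclength, I expect the pointwise bound $\|\varphi\|_{L^\infty} \leq 2\|\kappa\|_{L^2}\|V\|_{L^2}$. Since \eqref{eq:BoundEnergy} bounds $\|\kappa\|_{L^2}$ uniformly in terms of $\ER(\rg_0)$ and $\Ell(\trg_t)$ is bounded above and below (as noted after Theorem~\ref{thm:HigherRegularity}), the measures $dx$ and $ds$ are comparable and we obtain $\int_{\RZ}|\partial_t\trg|^2\,dx \leq C\,\|V\|_{L^2(ds)}^2$. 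Integrating in time then yields (3). This derivation of the transport-type identity for $\varphi$ and the use of the anchoring condition is where I expect the real work to lie; the rest is bookkeeping.

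Finally, (4) follows from (3) by a standard argument: writing $\trg_{t_1}-\trg_{t_0} = \int_{t_0}^{t_1}\partial_t\trg\,dt$, Minkowski's integral inequality followed by Cauchy--Schwarz in time gives $\|\trg_{t_1}-\trg_{t_0}\|_{L^2} \leq |t_1-t_0|^{1/2}\big(\int_0^\infty\|\partial_t\trg\|_{L^2(dx)}^2\,dt\big)^{1/2} \leq C\,|t_1-t_0|^{1/2}$, which is exactly the claimed $\tfrac12$-Hölder estimate.
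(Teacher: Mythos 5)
Your proposal is correct and follows essentially the same route as the paper: (1) and (2) come from Corollaries \ref{cor:FastDecayF} and \ref{cor:EstimateKappa} (your extra remark that $|\nabla_s\kappa|\le|\partial_s^3\trg|$ and that these integrals are reparametrization-invariant is the small detail the paper leaves implicit), the crux of (3) is exactly the paper's argument — the constant-speed constraint determines $\partial_s\varphi$ up to the $x$-independent quantity $\partial_t\log\Ell$, the anchoring $\varphi(t,0)=0$ from \eqref{eq:FullEquationAtAPoint} fixes the constant, and $\|\varphi\|_{L^\infty}\lesssim\|\kappa\|_{L^2}\|\partial_t^\bot\trg\|_{L^2}$ closes the estimate — and (4) is the same Cauchy--Schwarz-in-time argument. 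Your derivation of the transport identity via \eqref{eq:eveqds} is just a cleaner repackaging of the paper's computation, which instead differentiates $|(\trg)'|=\Ell(\trg)$ directly.
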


\begin{proof}

The estimate (1) and (2)  follow directly from Corollary \ref{cor:FastDecayF} and Corollary \ref{cor:EstimateKappa}. Differentiating Equation \eqref{eq:ConstantSpeed} we get
$$
\Ell (\trg) \partial_t \Ell (\trg) = \langle (\trg)', \partial_x \partial_t  \trg \rangle
= - \Ell (\trg)^2 \langle \kappa, \partial_t  \trg \rangle + \partial_x \langle (\trg)', \partial_t  \trg \rangle
$$
and hence, using that the length of the curves is bounded from below and above,
$$
|\partial_x \langle (\trg)', \partial_t  \trg \rangle| \leq  C ( | \partial_t \Ell(\trg)| + |\langle \kappa, \partial^\bot_t \trg\rangle |).
$$
Integrating over this estimate, we get by applying the fundamental theorem of calculus, Hölder's inequality and  \eqref{eq:FullEquationAtAPoint} 

\begin{align*}
| \langle  (\trg)', \partial_t  \trg \rangle | &\leq C (|\partial_t \Ell (\trg) | + \|\kappa\|_{L^2} \|\partial_t^{\bot}  \trg \|_{L^2} )  + 
|\langle  (\trg)' (t,0), \partial_t  \trg (t,0)\rangle |  
\\ & \leq  C (|\partial_t \Ell (\trg) | +\|\kappa\|_{L^2} \|\partial_t^{\bot}  \trg \|_{L^2}  ) . 
\end{align*}
Combined with
$$
| \partial_t \Ell |= \left| \int_{\RZ} \langle \kappa, \partial_t^{\bot} \trg \rangle ds \right| \leq \| \kappa \|_{L^2} \|\partial_t^{\bot} \trg \|_{L^2} \leq C  \|\partial_t^{\bot} \trg \|_{L^2},
$$
this gives
$$
 | \partial_t ^{T} \trg | \leq C ( \|\partial_t^{\bot} \trg\|_{L^2} + |\kappa| |\partial_t^{\bot}  \trg |).
$$
Integrating over space and time and using Hölder's inequality, we hence obtain
$$
 \int_{0}^\infty \int_{\RZ} |\partial_t ^T \trg|^2 ds dt \leq C  \int_{0}^\infty \int_{\RZ} |\partial_t ^\bot \trg|^2 ds dt \leq C.
$$
As $ds = \Ell dx$ and the length is bounded from below, this proves property (3).

We can derive the Hölder estimate (4) using a standard estimate for $L^2$ gradient flows. Differentiating the quantity $\int_{\RZ} |\trg_{t} - \trg_{t_0}|^2 dx$ for a fixed time $t_0 \in (0, \infty)$,  we get  that
\begin{align*}
   \frac d {dt} \| \trg_{t} - \trg_{t_0}\|_{L^2}^2  & = 2 \int_{\RZ} \langle \trg_t - \trg_{t_0}, \partial_t \trg \rangle dx  
   \\ & \leq 2  \|\trg_t - \trg_0\|_{L^2}  \|\partial_t \trg\|_{L^2}.
\end{align*}
Hence, by the fundamental theorem of calculus
$$
 \|\trg_t - \trg_{t_0}\|_{L^2} \leq C |t-t_0|^{\frac 1 2}.
$$
So also (4) is proven.
\end{proof}

It is now straightforward to prove convergence of the solutions $\trg$ to a weak solution that has all the properties mentioned in Theorem \ref{thm:LongTimeExistenceRElasticSmoothData}. 

\begin{proof}[Proof of Theorem \protect{\ref{thm:LongTimeExistenceRElasticSmoothData}}]
Let $\varepsilon_n, \delta_n \rightarrow 0$ and let us set $\gn = \tilde \g^{\varepsilon_n, \delta_n}$. After chosing an appropriate subsequence, we can assume that $\partial_t \gn$ converges to $\partial_t \g$ weakly in $L^2$. Using that the solutions $\gn$ are uniformly bounded in $L^\infty W^{2,p}$, the compact embedding $W^{2,p} \hookrightarrow L^2$ and a standard diagonal sequence argument, we can furthermore assume after going to a subsequence that $\gn$ converges in $L^2$ for all times $t\in \mathbb Q \cap (0, \infty)$. Due to the uniform control of the Hölder constant  (4) in Lemma \ref{lem:PropApproxSolutions},  this subsequence then also converges locally in $L^\infty L^2$ to $\g: [0,\infty) \times \RZ \rightarrow \mathbb R^n.$ Interpolating once more, using that the $W^{2,p}$-norm of the curves is uniformly bounded, we also get convergence in  $L^\infty W^{1, \infty}$ locally in time.
If we integrate over the interpolation estimate
$$
 \|\partial_s ^2 (\gn - \g^{(m)}) \|_{L^p} \leq C (\|\partial_s^2 \gn- \partial_s^2 \g^{(m)}\|_{B^{\frac 1 {2p}}}^{\theta} \|\tau_n - \tau_m\|^{1-\theta}_{L^p} + \|\tau_n - \tau_m\|_{L^p} )
$$
and use property (2) of Lemma \ref{lem:PropApproxSolutions}, we see that
$$
 \int_{0}^T \int_{\RZ} |\kappa_n - \kappa_m|^p ds dt \leq C (T+1) \|\tau_n - \tau_m\|^p_{L^\infty((0,T), L^p)} \rightarrow 0
$$
as $n$ and $m$ go to $\infty$.
This implies that the $\gn$ even converge in $L^pW^{2,p}$ to $\g$ locally in time. Furthermore, we know from property (1) of Lemma \ref{lem:PropApproxSolutions} that
$$
 \varepsilon_n \int_0^T \int_{\RZ} |\nabla_s \kappa|^2 ds dt \leq C \varepsilon^{\frac 1 5}_n  \rightarrow 0.
$$
From the evolution equation we see that for all test functions $ V\in C^\infty_c (\RZ, \mathbb R^n)$ we have 
\begin{equation} \label{eq:WeakEquation1}
 \int_0^\infty \int_{\RZ}  \langle \partial_t^{\bot} \gn, V \rangle dx dt = \int_0^\infty ( \varepsilon \delta_V\F (\gn) + \delta_V\EPD (\gn)
 + \lambda \delta_V \Ell(\gn) ) dt.
\end{equation}
As $\partial_t \gn$ converges weakly to $\partial_t \g$ in $L^2$, $\tau_n$ converges strongly to $\tau$ in $L^\infty$, and $\partial_t ^\bot \gn = \partial_t \gn - \langle \partial_t \gn, \tau_n \rangle \tau_n$, we get
$$
 \int_0^\infty \langle \partial_t^\bot \gn, V \rangle dx dt \rightarrow \int_0^\infty \langle \partial_t^\bot \g, V \rangle dx dt
 $$
 as $n$ goes to $\infty.$ The first variation of $\varepsilon \F$ is given by
 $$
 \varepsilon \delta_V \F (\gn) = \varepsilon \int_{\RZ} \langle\nabla_s \kappa_n, \delta_{V} (\nabla_s \kappa_n)\rangle ds + \frac \varepsilon 2 \int_{\RZ} |\nabla_s \kappa_n|^2 \langle \tau, \partial_s V \rangle ds
 $$
 where
 $$
  \delta_V (\nabla_s \kappa) =(\delta_V (\nabla_s \kappa))^\bot = (\partial_s^3 V)^\bot + \partial_s^2 V \ast \tau \ast \kappa   + \partial_s V \ast (\tau  \ast \partial_s \kappa + \kappa \ast \kappa + \kappa \ast \kappa \ast \tau).
 $$
 Hence, 
 \begin{align*}
 \varepsilon | \delta_V \F (\gn)| &\leq  \varepsilon C (\| \partial ^3_s \gn \|_{L^2} + \|\partial^3_s \gn\|_{L^2} (\|\kappa_n\|^2_{L^4} + \|\kappa_n\|_{L^2})) 
 \\ & \leq \varepsilon  C (\|\partial^3_s \gn\|_{L^2} + \|\partial^3_s \gn \|^{\frac 3 2}_{L^2} +1)
 \leq \varepsilon C (\|\partial^3_s \gn\|_{L^2}^2 +1). 
 \end{align*}
 So we get 
 \begin{equation} \label{eq:ConvergenceF1}
  \varepsilon \int_0^\infty \delta_V \F(\gn) dt \rightarrow 0
 \end{equation}
 as $n$ goes to $\infty$.
 
 To get control of the first variation of $\EPD$, we first observe that after taking a subsequence we can furthermore assume that $\kappa_n$ converges to $\kappa$ almost everywhere in space and time. Using the convexity of $x\rightarrow x^{\frac p2}$ for $p>2$  we see that
 $$
  ( |\kappa_n|^2 + \delta_n^2 )^{\frac p 2} \leq 2 ^{\frac p2 -1} (|\kappa_n|^p + \delta_n^p)
 $$
 So,  $$ ( |\kappa_n|^2 + \delta_n^2 )^{\frac p 2}$$ is uniformly integrable. As it also converges pointwise almost everywhere to $|\kappa|^p$  as $n \rightarrow \infty$, we get by Vitali's theorem that 
$$
   ( |\kappa_n|^2 + \delta_n^2 )^{\frac p 2} \rightarrow |\kappa|^p
$$
in $L^1$ as $n \rightarrow \infty.$ Similarly,
$$
   ( |\kappa_n|^2 + \delta_n^2 )^{\frac p 2-1} \kappa_n \rightarrow |\kappa|^{p-2} \kappa
$$
in $L^1$ as $n$ goes to $\infty$.
From 
\begin{equation*} 
\delta_V \EPD( \g ) = \int_{\RZ} (|\kappa|^2 + \delta^2 )^{\frac{p-2}{2}} \langle \kappa, \delta_V \kappa \rangle ds + \frac 1 p \int_{\RZ} (|\kappa|^2 + \delta^2)^{\frac p 2} \langle \tau, \partial_s V \rangle ds 
\end{equation*}
and
$$
\delta_V \kappa =  (\partial_s^2 V)^\bot - 2 \langle \partial_s V, \tau \rangle \kappa - \langle \partial_s V, \kappa \rangle \tau
$$
we thus deduce that 
\begin{equation} \label{eq:ConvergenceEP1}
 \delta_V \EPD (\gn ) \rightarrow \delta_V \E^{(p)} (\g)
\end{equation}
as $n \rightarrow \infty$. From
$$
 \delta_V \Ell(\gn) = \int_{\R/\Z} \langle \tau_n, \partial_s V \rangle ds
$$
we finally see that 
\begin{equation} \label{eq:ConvergenceL1}
 \delta_V \Ell(\gn)  \rightarrow \delta_V \Ell(\g).
\end{equation}
Using \eqref{eq:ConvergenceF1}, \eqref{eq:ConvergenceEP1}, and \eqref{eq:ConvergenceL1}, we can let $n$ go to infinity in \eqref{eq:WeakEquation1} to obtain
$$
 \int_0^\infty \int_{\RZ}  \langle \partial_t^{\bot} \g, V \rangle dxdt = \int_0^\infty \delta_V ( \E^{(p)}(\g) + \lambda \Ell(\g)) dt =  \int_0^\infty \delta_V \EP(\g) dt
$$
for all test functions $ V\in C^\infty_c (\RZ, \mathbb R^n)$. Furthermore,  the estimates (1), (2), (3), and (4) of Lemma \ref{lem:PropApproxSolutions} together with the fact that $\ER(\g_0)$ converges to $\EP(\g_0)$ as $\varepsilon, \delta \rightarrow 0$, immediately give the estimates mentioned in the theorem.
 \end{proof}

\subsection{Arbitrary initial data}

For an arbitrary initial regular curve$\g_0$ of class $W^{2,p}$ we pick a sequence of smooth regular curves $\gn_0 \in C^\infty$ converging to $\g_0$ in $W^{2,p}$ with $\sup_{n} \EP(\gn_0) \leq 2 \EP(\g)$. Using Theorem \ref{thm:LongTimeExistenceRElasticSmoothData} , we get weak solution $\gn$ of the initial value problem
$$
 \begin{cases}
  	\partial_t^\bot \gn = \nabla_{L^2} \EP (\gn) \\
	\gn (0, \cdot ) = \gn_0
 \end{cases}
$$
such that $\gn(t, \cdot)$ is parametrized with constant speed. 
Furthermore, these solutions satisfy the estimates
$$
 \|\partial_t \gn\|_{L^2((0, \infty) \times \RZ) }  \leq C , \qquad   \|\kappa_n\|_{L^p((0, T), B^{\frac 1 {2p}}_{p, \infty})} 
 \leq C (T+1 ) ,
$$
and
$$
\|\gn_{t_1} - \gn_{t_0}\|_{L^2} \leq C |t_1 - t_0|^{\frac 1 2} \quad \text{ for all } t_1, t_0 \in [0, \infty) \text{ with } |t_1 - t_0| \leq 1,
$$
where $C>0$ only depends on $\EP (\g_0)$ as $\sup_{n}\EP(\gn_0) \leq 2 \EP(\g_0) $.  As in the proof of Theorem \ref{thm:LongTimeExistenceRElasticSmoothData} we can assume that 
$\gn \rightarrow \g$ strongly in $L^p_{loc} ([0,\infty), W^{2,p}(\RZ))$  to $\g$ and $\partial_t \gn$ converges weakly locally in $L^2([0,\infty), L^2(\RZ))$
to $\partial_t \g$. 
From the evolution equation we see that for all test functions $ V\in C^\infty_c (\RZ, \mathbb R^n)$ we have
\begin{equation} \label{eq:EvolutionEquationApprox}
 \int_{0}^\infty \int_{\RZ} \langle \partial_t^{\bot}  \gn, V \rangle dxdt = \int_0^\infty ( \delta_V\E^{(p)}(\gn)
 + \lambda \delta_V \Ell(\gn) ) dt.
\end{equation}
As $\partial_t \gn$ converges weakly to $\partial_t \g$ in $L^2$, $\tau_n$ converges strongly to $\tau$ in $L^\infty$, and $\partial_t ^\bot \gn = \partial_t \gn - \langle \partial_t \gn, \tau_n \rangle \tau_n$, we get
$$
 \int_0^\infty \langle \partial_t^\bot \gn, V \rangle dx dt \rightarrow \int_0^\infty \langle \partial_t^\bot \g, V \rangle dx dt
 $$
 as $n$ goes to $\infty.$
From 
\begin{equation*} 
\delta_V \E^p ( \g ) = \int_{\RZ} |\kappa|^{p-2} \langle \kappa, \delta_V \kappa \rangle ds + \frac 1 p \int_{\RZ} |\kappa|^p \langle \tau, \partial_s V \rangle ds 
\end{equation*}
and
$$
\delta_V \kappa =  (\partial_s^2 V)^\bot - 2 \langle \partial_s V, \tau \rangle \kappa - \langle \partial_s V, \kappa \rangle \tau 
$$
we thus deduce that 
$$
 \delta_V \EP (\gn ) \rightarrow \delta_V \E^{(p)} (\gn)
$$
as $n \rightarrow \infty$. From
$$
 \delta_V \Ell(\g) = \int_{\RZ} \langle \tau, \partial_s V \rangle ds
$$
we finally see that 
$$
 \delta_V \Ell(\gn)  \rightarrow \delta_V \Ell(\g).
$$
Hence, we can let in Equation \eqref{eq:EvolutionEquationApprox}  $n$ go to infinity to get
$$
\int_{\RZ} \langle \partial_t^{\bot} \g, V \rangle = \int_0^\infty ( \delta_V\E^{(p)}(\g)
 + \lambda \delta_V \Ell(\g) ) dt
$$
for all test functions $ V\in C^\infty_c (\RZ, \mathbb R^n)$. Furthermore, we have 
$$
 \|\partial_t \g\|_{L^2((0, \infty) \times \RZ) }  \leq C \quad \textnormal{ and } \quad   \|\kappa\|_{L^p((0, T), B^{\frac 1 {2p}}_{p, \infty})} 
 \leq C (T+1). 
$$

\section{Asymptotics of the solution}

We use the last estimates to choose $t_n \rightarrow \infty$ such that both $\|\partial_t \g\|_{L^2([t_n, t_n+1] \times \RZ)} \rightarrow 0$ and $\|\kappa\|_{L^p([t_n, t_n +1 ], B^{\frac 1 {2p}}_{p, \infty}}$ is uniformly bounded. 
We now set
$$
 \gn:[0,1] \times \RZ \rightarrow \mathbb R^n, \quad \gn(t,x) = \g(t_n +t,x) - \g(t_n,0).
$$
Again we can assume after going to a subsequence, that  
$\gn \rightarrow \g$ strongly in $L^p([0,1], W^{2.p}(\RZ))$. From the evolution equation we see that for all test functions $ V\in C^\infty_c ((0,1)\times\RZ,\R^n)$ we have
\begin{equation} 
 \int_0^1\int_{\RZ} \langle \partial_t^{\bot}  \g, V \rangle dx dt= \int_0^1 ( \delta_V\E^{(p)}(\gn)
 + \lambda \delta_V \Ell(\gn) ) dt.
\end{equation}
As $\partial_t \gn$ converges to $0$ in $L^2$, $\tau_n$ is bounded, and $\partial_t ^\bot \gn = \partial_t \gn - \langle \partial_t \gn, \tau_n \rangle \tau_n$, we get
$$
 \int_0^1 \int_{\RZ} \langle \partial_t^\bot \gn, V \rangle dx dt \rightarrow 0
$$
 as $n$ goes to $\infty.$
From 
\begin{equation*} 
\delta_V \E^p ( \g ) = \int_{\RZ} |\kappa|^{p-2} \langle \kappa, \delta_V \kappa \rangle ds + \frac 1 p \int_{\RZ} |\kappa|^p \langle \tau, \partial_s V \rangle ds 
\end{equation*}
and
$$
\delta_V \kappa =  (\partial_s^2 V)^\bot - 2 \langle \partial_s V, \tau \rangle \kappa - \langle \partial_s V, \kappa \rangle \tau 
$$
we deduce that 
$$
 \delta_V \EP (\gn ) \rightarrow \delta_V \E^{(p)} (\gn)
$$
as $n \rightarrow \infty$. From
$$
 \delta_V \Ell(\g) = \int_{\RZ} \langle \tau, \partial_s V \rangle ds
$$
we get 
$$
 \delta_V \Ell(\gn)  \rightarrow \delta_V \Ell(\g).
$$
Hence, we can let  $n$ go to infinity  in Equation \eqref{eq:EvolutionEquationApprox} to get
\begin{equation} \label{eq:Stationary}
0 =  \int_0^1( \delta_V\E^{(p)}(\g)
 + \lambda \delta_V \Ell(\g) ) dt
\end{equation}
for all test functions $ V\in C^\infty_c ((0,1) \times \RZ, \mathbb R^n)$.

For $\tilde V \in C^\infty (\RZ, \mathbb R^n )$  and a non-vanishing $\phi \in C^\infty_c ((0,1), [0,1])$ we use Equation \eqref{eq:Stationary} with $V(t,x) = \phi(t)\tilde V(x) $ to get
$$
 0 = \int_0^1\phi dt ( \delta_{\tilde V}\E^{(p)}(\g)
 + \lambda \delta_{\tilde V} \Ell(\g) ).
$$
Hence we finally obtain
$$
 \delta_{\tilde V}\E^{(p)}(\g)
 + \lambda \delta_{\tilde V} \Ell(\g) ) = 0 
$$
for all $\tilde V \in C^\infty (\RZ, \mathbb R^n )$,  as  $\int_0^1\phi dt>0$. So $\g$ is a critical point of $\EP$. This finishes the proof of Theorem \ref{thm:LongTimeExistenceRElastic}

\bibliographystyle{alpha}
\bibliography{Master}

\end{document}